\newcounter{alltheorems}[section]
\theoremstyle{plain}
\newtheorem{theorem}[alltheorems]{Theorem}
\newtheorem{lemma}[alltheorems]{Lemma}
\newtheorem{corollary}[alltheorems]{Corollary}
\newtheorem{problem}[]{Open Problem}
\theoremstyle{definition}
\newtheorem{definition}[alltheorems]{Definition}
\theoremstyle{remark}
\newtheorem{remark}[]{Remark}
\newcommand{\R}{\mathbb{R}}
\newcommand{\C}{\mathbb{C}}
\DeclareMathOperator{\spt}{spt}
\DeclareMathOperator{\dens}{dens}
\DeclareMathOperator{\bd}{bd}
\DeclareMathOperator{\im}{im}
\DeclareMathOperator{\tp}{top}
\DeclareMathOperator{\diam}{diam}
\DeclareMathOperator{\sgn}{sgn}
\title[Maximal Polynomial Modulations of Singular Radon Transforms]{Maximal Polynomial Modulations of Singular Radon Transforms}
\author{Lars Becker}
\date{\today}
\subjclass[2020]{42B20}
\keywords{}
\address{Mathematical Institute, 
	University of Bonn,
	Endenicher Allee 60, 53115, Bonn,
	Germany. }
	\email{becker@math.uni-bonn.de}
\begin{document}

\begin{abstract}
    We prove $L^2 \to L^p$ estimates on the torus for maximal polynomial modulations of Calderón-Zygmund operators with anisotropic scaling. We obtain improved constants in these estimates. As a corollary,  maximal polynomial modulations of a mollified version of the Hilbert transform along the parabola are bounded with only logarithmic dependence of the estimate on the Lipschitz constant of the mollifier.
\end{abstract}

\maketitle

\section{Introduction}
Given a singular integral operator $T: L^p(\R^\mathbf{d}) \to L^p(\R^\mathbf{d})$ and a set $\mathcal{Q}$ of polynomial functions $\R^\mathbf{d} \to \R$, define the maximal modulation operator $T^\mathcal{Q}$ by
\[
    T^\mathcal{Q}f(x) = \sup_{Q \in \mathcal{Q}} |T(M_Q f)(x)|\,,
\]
where $M_Q f(t) = e^{iQ(t)}f(t)$. 

The study of maximal modulation operators grew out of the work of Carleson \cite{Carelson1966}, who proved that the Fourier series of an $L^2$ function $f$ converges pointwise almost everywhere to $f$.
Carleson's theorem is equivalent to the boundedness from $L^2(\R)$ into $L^{2, \infty}(\R)$ of the Carleson operator $H^{\mathcal{Q}_1} f$, where $\mathcal{Q}_1$ is the set of linear polynomials and $H$ denotes the Hilbert transform
\[
    Hf(x) =  p.v.\,\frac{1}{\pi}\int f(x-t) \frac{\mathrm{d}t}{t}\,.
\]
Later, other proofs of Carleson's theorem were given by Fefferman \cite{Fefferman1973} and Lacey and Thiele \cite{Lacey+2000}.
Sjölin \cite{Sjölin1971} replaced the Hilbert transform $H$ with a general Calderón-Zygmund operator $T$ and proved boundedness on $L^2(\R^\mathbf{d})$ of the maximal modulation operators $T^{\mathcal{Q}_1}$, see also \cite{Pramanik2003} for an alternative proof. A multilinear analogue of the Carleson operator was studied in \cite{Li2007}.

The investigation of more general maximal polynomial modulation operators originates in the work of Stein \cite{Stein1995}. He proved an $L^2(\R)$ estimate for the operator $H^\mathcal{Q}$, where $\mathcal{Q} = \{\alpha t^2 \, : \, \alpha \in \R\}$.
Stein and Wainger \cite{Stein+2001} showed $L^2(\R^\mathbf{d})$ bounds for $T^\mathcal{Q}$ for Calderón-Zygmund operators $T$ and $\mathcal{Q}$ the set of all polynomials of degree at most $d$ with no linear term. Finally, the restriction on the linear term of the polynomials was removed by Lie \cite{Lie2009}, \cite{Lie2020} for the Hilbert transform and by Zorin-Kranich \cite{ZK2021} for Hölder continuous Calderón-Zygmund kernels in arbitrary dimension.

This article deals with a problem introduced by Pierce and Young \cite{Pierce2019}. They considered singular integrals on a paraboloid
\[
    Sf(x,y) = \int f(x - z, y - |z|^2) K(z) \, \mathrm{d}z\,,
\]
where $K$ is some Calderón-Zygmund kernel on $\R^\mathbf{d}$, and proved $L^p(\R^{\mathbf{d}+1})$ estimates for the operator $S^\mathcal{Q}$ when $1 < p < \infty$, $\mathbf{d}\geq 2$ and $\mathcal{Q}$ is a subspace of polynomials satisfying certain restrictions. In particular, the polynomials in $\mathcal{Q}$ are not allowed to have linear terms and the quadratic term cannot be a multiple of $|x|^2$. This motivates the following question:
\begin{problem}
\label{mainproblem}
Define the Hilbert transform along the parabola by 
\[
    H_P f(x,y) = p.v.\,\int f(x - t, y - t^2) \, \frac{\mathrm{d}t}{t}\,.
\]
Is the maximally modulated Hilbert transform $H_P^{\mathcal{Q}_1}$ bounded from $L^2(\R^2)$ into $L^{2,\infty}(\R^2)$?
\end{problem}

A natural line of attack towards \cref{mainproblem} is to approximate the Hilbert transform along the parabola by Calderón-Zygmund operators $T$ with anisotropic scaling which have smooth kernels away from $0$. If one could prove uniform bounds for maximal modulations of such operators $T$, one could answer \cref{mainproblem} positively by a limiting argument. In the present paper, we prove bounds for maximal polynomial modulations of such operators $T$, extending the results from \cite{ZK2021} to the anisotropic setting, and improve the constant in the estimates compared to \cite{ZK2021}.

Fix $\mathbf{d} \in \mathbb{N}$ and a vector of exponents $\alpha = (\alpha_1, \dotsc, \alpha_\mathbf{d}) \in \mathbb{N}^\mathbf{d}$ with $\alpha_1 \leq \dotsc \leq \alpha_\mathbf{d}$. Denote its length by $|\alpha| = \sum_{i=1}^\mathbf{d} \alpha_i$. The anisotropic dilations with exponent $\alpha$ are 
\[
    \delta_r(x_1, \dotsc, x_\mathbf{d}) = (r^{\alpha_1} x_1, \dotsc, r^{\alpha_\mathbf{d}} x_\mathbf{d})\,
\]
and the anisotropic distance function is defined as
\[
    \rho(x) = \inf\{r > 0 \, : \, |\delta_{r^{-1}}(x)| \leq 1\}\,.
\]
A \emph{Calderón-Zygmund kernel with anisotropic scaling} is a tempered distribution $K$ agreeing on $\R^\mathbf{d} \setminus \{0\}$ with a function $K: \R^\mathbf{d} \setminus \{0\} \to \C$ such that for some $A > 0$
\begin{align}
    \label{KernelUpperBoundIntro}
    |K(x)| &\leq A \, \rho(x)^{-|\alpha|}\,,\\
    \label{KernelHölderBoundIntro}
    |K(x) - K(x')|  &\leq  A \, \frac{\rho(x-x')}{\rho(x)^{|\alpha| + 1}}\ \text{if}\ 2\rho(x-x') \leq \rho(x)\,,\\
    \label{kernelL2bound}
    |\hat K(\xi)| &\leq A\,.
\end{align}
The Calderón-Zygmund operator $T$ associated to $K$ is the operator defined on Schwartz functions $f$ by $Tf = K*f$. Note that \eqref{kernelL2bound} implies $\|Tf\|_2 \leq A\|f\|_2$ for $f \in \mathcal{S}(\R^\mathbf{d})$.

An \emph{admissible decomposition} of a Calderón-Zygmund kernel $K$ with anisotropic scaling is a decomposition $K = \sum_{s \in \mathbb{Z}} K_s$, where for all $s \in \mathbb{Z}$, the function $K_s$ is supported in $\{x \, : \, 2^{s-1}/4 \leq \rho(x) \leq 2^s/2\}$ and is Lipschitz continuous with constant $A 2^{-(|\alpha|+1)s}$. 
Given an admissible decomposition, we define for $f \in L^1(\mathbb{T}^\mathbf{d})$ the maximally truncated singular integral
\begin{align}
    R^Kf(x) = \sup_{\underline{\sigma} \leq \overline{\sigma}\leq 0} \left| \sum_{s = \underline{\sigma}}^{\overline{\sigma}} \int_{y \in x + \mathbb{T}^\mathbf{d}} K_s(x-y)f(y) \, \mathrm{d}y \right|
\end{align}
and the maximal average
\begin{align}
    \label{MaximalFctDef}
    M^K f(x) = \sup_{s} \int_{y \in x + \mathbb{T}^\mathbf{d}} |K_s(x-y)||f(y)| \, \mathrm{d}y\,.
\end{align}
It is not hard to explicitly construct admissible decompositions, however for proving estimates for $M^K$ and $R^K$ it is often helpful to have flexibility in the choice of $K_s$. 

Fix $d \in \mathbb{N}$ and let $\mathcal{Q}$ be the vector space of all real polynomials on $\R^{\mathbf{d}}$ of degree at most $d$ without constant term. We define for $f \in L^2(\mathbb{T}^\mathbf{d})$ 
\begin{align}
    \label{TTDef}
    T^\mathcal{Q} f(x) = \sup_{Q \in \mathcal{Q}} \sup_{0 < \underline{R} < \overline{R}} \left| \int_{y \in x + \mathbb{T}^\mathbf{d} :\underline{R} < \rho(x-y) < \overline{R}} K(x-y) e^{i Q(y)} f(y) \, \mathrm{d}y \right|\,.
\end{align}
Here and in all similar integrals we identify $\mathbb{T}^\mathbf{d}$ with $[-1/2, 1/2)^\mathbf{d}$.
Then we have the following new theorem, which extends the main result from \cite{ZK2021} to Calderón-Zygmund operators with anisotropic scaling and improves the constants.
\begin{theorem}
    \label{MainThmWeakBound1}
    Let $1 \leq p < 2$ and let $K$ be a Calderón-Zygmund kernel with anisotropic scaling, as defined in \eqref{KernelUpperBoundIntro} - \eqref{kernelL2bound} with constant $A$, with admissible decomposition $K = \sum_{s \in \mathbb{Z}} K_s$. 
    Then the operator $T^\mathcal{Q}$ defined by \eqref{TTDef} is bounded from $L^2(\mathbb{T}^\mathbf{d})$ into $L^p(\mathbb{T}^\mathbf{d})$ with  
    \begin{equation}
        \label{MainThmWeakBound1Eqn}
         \|T^\mathcal{Q}\|_{2 \to p} \lesssim_{\alpha, d,p} (\|R^K\|_{2 \to 2} + \|M^K\|_{2 \to 2}) \log^2(e + \frac{A}{\|M^K\|_{2 \to 2} + \|R^K\|_{2 \to 2}})\,.
    \end{equation}
\end{theorem}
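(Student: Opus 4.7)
The plan is to adapt the time-frequency analysis used by Lie \cite{Lie2020} and Zorin-Kranich \cite{ZK2021} for isotropic Calderón-Zygmund kernels to the anisotropic dilation structure $\delta_r$, and to organize the bookkeeping so that the dependence on $A$ enters only through $\log^2$. First, I would linearize $T^\mathcal{Q}f(x)$ by a measurable selection of $Q(x) \in \mathcal{Q}$ and truncation radii $\underline{R}(x), \overline{R}(x)$, which via the admissible decomposition reduces the supremum to the sum $\sum_{\underline{\sigma}(x) \leq s \leq \overline{\sigma}(x)} \int K_s(x-y) e^{iQ(x)(y)} f(y)\,\mathrm{d}y$. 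At each scale $s$ the space $\mathbb{T}^\mathbf{d}$ is partitioned into anisotropic boxes of $\rho$-radius $\sim 2^s$, and the polynomial frequency space $\mathcal{Q}$ is partitioned into cells whose lattice spacing in the coefficient of each monomial $x^\beta$ is $\sim 2^{-s \langle \alpha,\beta\rangle}$, matching Heisenberg uncertainty for $\delta_r$. The resulting anisotropic tiles $P = I \times \omega$ support a discrete model sum for $T^\mathcal{Q}f$.

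Next, I would group the tiles into \emph{trees} with a common top polynomial frequency. For a single tree, the modulation $e^{iQ(x)}$ only fluctuates by a bounded amount relative to the tree-top frequency, so the tree sum can be estimated pointwise by $R^K$ and $M^K$ applied to $f$ after a suitable affine change of the frequency variables, giving a per-tree estimate of the form $\lesssim (\|R^K\|_{2\to 2} + \|M^K\|_{2\to 2}) \|f\|_{L^2(I_{\mathrm{tree}})}$. A greedy selection algorithm controlled by \emph{size} (an $L^2$-average of $f$ over the spatial tile) and \emph{density} (the measure of the linearizing selection intersected with the tile's frequency cell) extracts collections of trees whose tops satisfy an anisotropic Carleson-packing inequality, the direct analogue of the BMO/John-Nirenberg bound from \cite{ZK2021} but with cubes replaced by $\rho$-balls.

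Finally, the $L^2 \to L^{2,\infty}$ estimate is assembled by summing over dyadic levels of the size and density parameters. The size ranges between the trivial upper bound $\sim A$, beyond which no tree contributes, and the threshold $\sim \|R^K\|_{2\to 2} + \|M^K\|_{2\to 2}$, below which the per-tree bound saturates and the packing inequality yields a single collective estimate; this contributes one factor of $\log(e+A/(\|R^K\|_{2\to 2}+\|M^K\|_{2\to 2}))$. The density parameter contributes a second such factor, producing the $\log^2$. The $L^p$ bound for $1 \leq p < 2$ then follows by the embedding $L^{2,\infty}(\mathbb{T}^\mathbf{d}) \hookrightarrow L^p(\mathbb{T}^\mathbf{d})$. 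The main obstacle will be executing the tree selection and the Carleson-packing inequality in the anisotropic setting: the relevant covering lemmas and BMO/John-Nirenberg estimates must be reproved for $\rho$-balls, and the interaction of polynomial phases with anisotropic dilations must be tracked carefully, all while arranging the bookkeeping so as to pay only two logarithmic factors in $A$ rather than a power.
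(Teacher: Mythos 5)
Your outline captures the correct skeleton (linearize the supremum, build anisotropic tiles, group into trees, estimate a single tree by $\|R^K\|_{2\to 2}+\|M^K\|_{2\to 2}$, and feed the remaining trees into a Carleson-type packing estimate), but two points diverge materially from what the theorem requires, and in both cases your bookkeeping for the $\log^2$ factor does not match what actually happens.

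First, the mechanism you propose for the two logarithms is not right. You attribute one $\log$ to the size parameter and one to the density parameter, in the spirit of the Lacey--Thiele double stopping time. But the theorem does not give a weak-$L^2$ estimate with the improved constant --- it gives $L^2\to L^p$ only for $p<2$. The reason is that the underlying distribution-function estimate has a loss of the form $\log^{4}\lambda$ coming from the decomposition of the good tiles into antichains and forests: the number of antichains at density level $n$ is of order $\gamma n^2+\gamma n\log\log\lambda$ with $\gamma\sim\log\log\lambda$, and the number of forests is of order $n+\log\log\lambda$, and the tree-tops overlap bound and the $2^{\gamma n}$-separation all carry extra $\log\lambda$ and $\log\log\lambda$ factors. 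These log-of-$\lambda$ losses rule out weak $L^2$ with the improved constant and are only compatible with $L^p$, $p<2$. Your plan to prove $L^2\to L^{2,\infty}$ with the improved constant and then embed into $L^p$ would therefore need a different and tighter argument than what you sketch, and it is stronger than what the theorem asserts. Second, and more to the point of the constant: the quadratic dependence $\log^2$ actually arises because the decomposition into antichains produces $\sim n^2$ antichains at each density level $n$, so that the contribution at the critical density $n_0\sim\log(e+A/(\|M^K\|+\|R^K\|))$ --- where one switches from the maximal-function bound to the exponentially decaying bound with constant $A$ --- is multiplied by $n_0^2$. There is no size parameter anywhere in the argument; density alone is used, in the Fefferman/Lie exceptional-set style, not the Lacey--Thiele size/density stopping time.

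Beyond that, the outline leaves unaddressed several steps that are essential for the constant to come out right: the bounded-overlap estimate on tree tops (the $L^\infty$-forest condition), the $2^{\gamma n}$-separation of trees within a forest and its role in the row-orthogonality argument via $TT^*$, the treatment of the boundary parts of trees by peeling off $O(n\log\lambda)$ antichain layers and discarding the rest into a second exceptional set, and the van der Corput estimate for separated tiles and trees that drives the exponential decay in $n$. Without these pieces the claimed per-tree and per-collection bounds cannot be summed to the stated right-hand side. Your proposal is a reasonable first sketch, but as it stands it misidentifies the source of the $\log^2$, overclaims the endpoint, and omits the specific mechanisms that make the improved constant possible.
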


\begin{remark}
    The conclusion of \cref{MainThmWeakBound1} still holds if $K$ is merely Hölder continuous. In fact, the same proof works for Hölder continuous kernels, one merely has to change some exponents. 
\end{remark}

We explain how this estimate \enquote{improves the constant}. It is possible to adapt the proof in \cite{ZK2021} to the anisotropic setting and to make all constants in the proof explicit, this yields the estimate $\|T^\mathcal{Q}\|_{2 \to 2} \lesssim A$, and thus  $\|T^\mathcal{Q}\|_{2 \to p} \lesssim A$.
To compare this to estimate \eqref{MainThmWeakBound1Eqn}, note that the function $x\log^2(e + A/x)$ is increasing in $x> 0$ for all $A > 0$, and that it holds in general that $\|M^K\|_{2 \to 2} + \|R^K\|_{2 \to 2} \lesssim A$. Thus \cref{MainThmWeakBound1} also implies the estimate $\|T^\mathcal{Q}\|_{2 \to p} \lesssim A$ for $p < 2$, and improves it if better estimates for $M^K$ and $R^K$ are available. 

We now give some examples where this is the case.
Studying \cref{mainproblem}, the situation we are mostly interested in is when $K$ approximates a Hilbert transform along a homogeneous curve. In this case, $R^K$ and $M^K$ are bounded uniformly, because the maximal functions associated to the Hilbert transform along a homogeneous curve are bounded, see \cite{Stein+1978}. Hence \eqref{MainThmWeakBound1Eqn} implies that in this case, the dependence of $\|T^\mathcal{Q}\|_{2 \to p}$ on the Lipschitz constant $A$ improves from linear to squared logarithmic. 
The same applies for singular integrals supported on paraboloids, as discussed in \cite{Pierce2019}.
We refer to \cite{Christ1999} for a general criterion for boundedness of the maximal functions associated to singular integrals on submanifolds. For all singular integrals supported on submanifolds for which the associated maximal average and maximally truncated operator are bounded, \eqref{MainThmWeakBound1Eqn} implies the same improvement of the constant from linear in the Lipschitz constant $A$ to squared logarithmic in $A$.

To answer \cref{mainproblem} positively, one would have to show uniform estimates for operators $T$ approximating a Hilbert transform along the parabola. Our improvement of the constant can thus be viewed as partial progress towards \cref{mainproblem}. In the Appendix we have worked out how this progress manifests itself in the roughness of the singular integrals for which one can show boundedness of maximal modulations.
We deduce from \cref{MainThmWeakBound1} that a version of \cref{mainproblem} can be answered positively where $H^P$ is replaced by a slightly less singular operator. While $H_P$ is supported on the parabola and thus looks like $\delta(y - x^2)$ on a line $x = const.$, our operator will have a singularity  $(|y-x^2| \log^{3+\varepsilon}(1/|y-x^2|))^{-1}$. In this context one should think of $\delta$ as having a singularity of order $-1$, so this misses the required singularity only by a logarithmic factor. As another application we show that maximal polynomial modulations of homogeneous, odd Calderón-Zygmund kernels with anisotropic scaling are bounded, under a very weak assumption on the modulus of continuity. All of these results are new.

We note that in all of our results, the only information used about $H_P$ is the boundedness of the maximal average and the maximally truncated operator associated to $H_P$. This is in contrast to the results in \cite{Pierce2019}, which use algebraic properties of the paraboloid. It would be interesting to find ways to exploit the algebraic structure of the parabola to make further progress on \cref{mainproblem}. 
One consequence of the algebraic structure is the following simple observation: For every polynomial $Q$ of two variables, there exists a polynomial $Q'$ of one variable whose coefficients depend only on $x$, $y$ and $Q$, such that $Q(x-t, y-t^2) = Q'(x-t)$. Thus, to answer \cref{mainproblem} positively, it would suffice to show a version of \cref{MainThmWeakBound1} with uniform constants under the assumption that the polynomials only depend on $x$. However, this assumption does not seem to allow for improvements in the conclusion of \cref{MainThmWeakBound1} or simplifications in the proof.

\Cref{mainproblem} has been investigated by other authors, we list some relevant references. Since $H_P$ is a convolution operator, there exists a Fourier multiplier $m_P$ such that $H_P f = (m_P \hat f){\check{}}$. The anisotropic dilation symmetry of $H_P$ implies that $m_P$ is homogeneous of degree zero with respect to an anisotropic scaling.
Roos \cite{Roos2019} made some progress on \cref{mainproblem} by showing that, if $m$ is homogeneous of degree zero with respect to an anisotropic scaling and sufficiently smooth, and $Tf = (m \hat f){\check{}}$, then $T^{\mathcal{Q}_1}$ is bounded from $L^2(\R^\mathbf{d})$ into $L^{2,\infty}(\R^\mathbf{d})$. Unfortunately, the multiplier $m_P$ is not smooth enough to directly apply Roos's result to it. We note that our \cref{MainThmWeakBound1} generalizes Roos's result to polynomial modulations, by the Hörmander-Mikhlin theorem. Another related paper is \cite{Ramos2021}, where the Fourier multiplier $m_P$ is restricted to lines and uniform bounds for maximal modulations of the resulting Fourier multipliers on $\R$ are shown. In \cite{Guo+2017} the authors obtain $L^p(\R^2)$ bounds for partial suprema of $H_P M_Q f$. Finally we refer to \cite{Mnatsakanyan2022}, where convergence of analogues of Fourier series for certain perturbations of the trigonometric system is proved, using an abstraction of the methods in \cite{ZK2021} in a similar spirit as in the present paper.

\subsection{Outline of the Proof}
We now give an overview of the proof of \cref{MainThmWeakBound1}. To show the $L^2(\mathbb{T}^\mathbf{d}) \to L^p(\mathbb{T}^\mathbf{d})$ estimate, we fix $\lambda$ and estimate the measure of the set $\{x \, : \, |T^\mathcal{Q} f(x)| > \lambda\}$. 
The first step of the argument, given in Section 2, is a discretization of $T^\mathcal{Q}$: The continuous truncation is replaced by a discrete truncation and the suprema are eliminated using stopping time functions. 
The next step is a decomposition of the operator. 
First, we decompose the operator according to so called tiles, which are localized on some $D$-adic cube and on which the stopping time functions are localized in a small subset of the space of polynomials $\mathcal{Q}$. 
Next, the set of tiles is organized into certain collections of tiles. Estimates for the parts of the operator corresponding to these collections are shown in Section 3 and Section 4. We give short sketches of the corresponding arguments in the beginning of these sections. All of this is combined in Section 5, where the main theorem
is derived from the results of Sections 2 to 4.
The basic structure of the proof is due to Charles Fefferman \cite{Fefferman1973}, it was adapted to polynomial modulations by Lie \cite{Lie2009}, \cite{Lie2020}. Our argument is based on the paper \cite{ZK2021}. 

In large parts of the proof we follow \cite{ZK2021}, with some straightforward modifications to translate the proof to the anisotropic setting. The whole proof in \cite{ZK2021} could be modified in this manner, yielding $L^p(\mathbb{T}^\mathbf{d})$ boundedness of $T^\mathcal{Q}$ for $1 <p<\infty$ with $\|T^\mathcal{Q}\|_{p \to p} \lesssim A$. However, we were not able to obtain the improved constants of \cref{MainThmWeakBound1} in all steps of the proof. Because of that, our proof is organized differently in some parts:
\begin{itemize}
    \item we use an exceptional set instead of the \enquote{stopping generations} in \cite{ZK2021}, Lemma 3.3. This simplifies the arguments in Section 3 compared to those in \cite{ZK2021}, and in particular makes it possible to obtain the improved constants there.
    \item we estimate boundary parts of trees (see Section 3.2) using an exceptional set argument.
\end{itemize}
This is done similarly in \cite{Lie2009}, \cite{Fefferman1973}, essentially we revert the changes in the argument that were introduced by Lie \cite{Lie2020} to show strong $L^2$ estimates without interpolation.

The new contributions in the present paper are that we adapt the proof to the anisotropic setting, and that we keep track of the constants in the proof of \cref{MainThmWeakBound1} and improve them by combining the estimates from \cite{ZK2021} with simpler estimates using the maximal functions $M^K$ and $R^K$.
The basic idea of this argument is as follows: In the proof of \cref{MainThmWeakBound1}, the operator is decomposed according to some parameter $n$. The different parts are then estimated with exponential decay in $n$, so that the proof can be completed by summing a geometric series. The constant in these exponentially decaying estimates is proportional to $A$. We show in addition different estimates without decay in $n$, using the maximal functions $M^K$ and $R^K$. The constants in these estimates are of size $\|M^K\|_{2\to2} + \|R^K\|_{2 \to 2}$. Using the better one of these estimates one obtains the logarithmic upper bound. The square of the logarithm arises because the first estimate actually has slightly slower than exponential decay in $n$. 

\subsection{Notation}
We use the notation $X \lesssim Y$ if there exists a constant $C$ such that $X \leq CY$, we write $X \gtrsim Y$ if $Y \lesssim X$ and $X \sim Y$ means that $X \lesssim Y$ and $Y \lesssim X$. If not stated otherwise, the implicit constant $C$ depends only on the exponents $\alpha$, the dimension $\mathbf{d}$ and the degree $d$ of the polynomials. Sometimes we use $C$ to explicitly denote a constant which only depends on $\alpha$, $\mathbf{d}$ and $d$, similarly $\varepsilon$ is a small positive number depending only on $\alpha$, $\mathbf{d}$ and $d$. The letter $A$ is used throughout to denote the constant of the Calderón-Zygmund kernel $K$, as defined in \eqref{KernelUpperBoundIntro}, \eqref{KernelHölderBoundIntro} and \eqref{kernelL2bound}.

$B_\rho(x,r) = \{y \in \R^\mathbf{d} \, : \, \rho(x-y) \leq r\}$ denotes a ball with respect to the anisotropic distance function $\rho$, while $B(x,r)$ denotes standard euclidean balls on $\R^\mathbf{d}$. Similarly, the diameter of a set with respect to the anisotropic distance function is abbreviated as $\diam_\rho E = \sup_{x,y \in E} \rho(x-y)$.

The letter $M$ always stands for the anisotropic version of the Hardy-Littlewood maximal function. The $q$-maximal function is defined as $M^q f = (M|f|^q)^{1/q}$. By the $L^p(\R^\mathbf{d})$ boundedness of $M$ for $p > 1$, the $q$-maximal function is bounded on $L^p(\mathbb{R}^\mathbf{d})$ for $p > q$.

We denote by $\mathcal{S}(\R^\mathbf{d})$ the space of Schwartz functions on $\R^\mathbf{d}$ and use the notation $e(x) = \exp(i x)$.

\subsection{Acknowledgements}
I would like to thank my advisor Christoph Thiele for many valuable discussions about the mathematics in this text, as well as numerous helpful suggestions regarding its write up.
I also thank the anonymous referee for numerous suggestions that helped improve the paper.
Finally, I am grateful to Pauline Dietrich, Jan Holstermann and Fabian Höfer, for reading an earlier version of this text and providing some small corrections.
The author was supported by the Collaborative Research Center 1060 funded by the Deutsche
Forschungsgemeinschaft (DFG, German Research Foundation) and the Hausdorff Center for
Mathematics, funded by the DFG under Germany’s Excellence Strategy - GZ 2047/1, ProjectID 390685813.

\section{Discretization}
\label{DecompositionSection}
In this section, we carry out some basic reductions and decompose the operator $T^\mathcal{Q}$. We first discretize the truncation in the definition of $T^\mathcal{Q}$ and replace it by a smooth truncation. Then we construct the collection of all tiles, and use it to decompose the operator into pieces $T_\mathfrak{p}$, where $\mathfrak{p}$ runs through the set of tiles. Finally, we organize the set of all tiles into antichains and forests.
We mostly follow Section 2 of \cite{ZK2021} and adapt it to the anisotropic setting. The main difference is that we do not use stopping generations. Instead we use an exceptional set, see \cref{FirstExceptionalSet}. This also simplifies the decomposition we obtain. 

We note that there is a small inconsistency in the notation: In the definition of an admissible decomposition the kernels $K_s$ have support in $\{\rho(x) \sim 2^s\}$, while in \eqref{KernelSupport} below the support is in $\{\rho(x) \sim D^s\}$. Given an admissible decomposition $K_s$ one can build kernels satisfying \eqref{KernelSupport} with $D = 2^l$ by taking sums of $l$ consecutive $K_s$. The maximal functions $M^K$ and $R^K$ for the latter decomposition are clearly bounded by those for the former, hence this change does not cause any problems. We have chosen to accept this inconsistency, because it allows us to state \cref{MainThmWeakBound1} without reference to the constant $D$, which is fixed only later.

\subsection{Discretization by Scale}
\label{DiscScaleSection}

Fix a large $D \in \mathbb{N}$ such that $D$ is a power of two. $D$ will be specified in the proof of \cref{ComparingPolBound}. 
Let $K$ be a kernel with admissible decomposition $K = \sum_k \tilde K_k$. Let $D = 2^l$ and define
\[
    K_s = \sum_{k = l(s-1)}^{ls - 1} \tilde{K}_k\,.
\]
Then $K_s$ satisfies
\begin{align}
    \label{KernelUpperBound}
    |K_s(x)| &\lesssim A D^{-s|\alpha|}\,,\\
    \label{KernelHoelderBound}
    |K_s(x) - K_s(x')| &\lesssim A \frac{\rho(x - x')}{D^{s(1 + |\alpha|)}}\,, \ \text{and}\\
    \label{KernelSupport}
    \spt K_s \subset \{x \, &: \, D^{s-1}/8 \leq \rho(x) \leq D^s/4\}\,.
\end{align}
It further holds that
\begin{align*}
    T^\mathcal{Q} f(x) \lesssim \sup_{Q \in \mathcal{Q}} \sup_{\underline{\sigma} \leq \overline{\sigma}\leq 0} \left| \sum_{s = \underline{\sigma}}^{\overline{\sigma}} \int_{x + \mathbb{T}^\mathbf{d}} K_s(x-y) e^{iQ(y)} f(y) \, \mathrm{d}y \right| +  M^Kf(x)\,.
\end{align*}
Thus, it suffices to estimate
\begin{equation}
\label{DiscreteOperator}
    \sup_{Q \in \mathcal{Q}} \sup_{\underline{\sigma} \leq \overline{\sigma}\leq 0} \left| \sum_{s = \underline{\sigma}}^{\overline{\sigma}} \int_{x + \mathbb{T}^\mathbf{d}} K_s(x-y) e^{iQ(y)} f(y) \, \mathrm{d}y \right|
\end{equation}
where $K_s$ satisfies the properties \eqref{KernelUpperBound} to \eqref{KernelSupport}.

By continuity in $Q$ of the integral in \eqref{DiscreteOperator}, we can restrict the supremum in $Q$ to a countable dense subset of $\mathcal{Q}$. By the monotone convergence theorem, we can further restrict the suprema to a finite set of polynomials $Q$ and scales $\underline{\sigma} \leq \overline{\sigma}$. Then the suprema become maxima, hence there exist measurable stopping time functions $Q: \R^\mathbf{d} \to \mathcal{Q}$, $\underline{\sigma}: \mathbb{R}^\mathbf{d} \to \mathbb{Z}$ and $\overline{\sigma}: \mathbb{R}^\mathbf{d} \to \mathbb{Z}$ with $\overline{\sigma} \geq \underline{\sigma}$, taking only finitely many values, such that the restricted supremum equals the absolute value of
\[
    T^{Q,\sigma}f(x) =   \sum_{\underline \sigma(x) \leq s \leq \overline \sigma(x) } \int_{x + \mathbb{T}^\mathbf{d}} K_s(x-y) e(Q_x(x) - Q_x(y)) f(y) \, \mathrm{d}y\,.
\]
We conclude that it is enough to show bounds for the operator $T^{Q,\sigma}$ which do not depend on $Q$ or $\sigma$. In the following, we fix $Q$ and $\sigma$ and set $s_{\min} = \min \underline{\sigma}$ and $s_{\max} = \max \overline{\sigma}$. 

\subsection{\texorpdfstring{$D$}{D}-adic Cubes}
\label{Dadicsection}
The anisotropic $D$-adic grid $\mathcal{D}$ is defined as the union
\[
    \mathcal{D} = \bigcup_{s \leq 0} \mathcal{D}_s
\]
where
\[
    \mathcal{D}_s = \{x + \delta_{D^s}([0,1)^\mathbf{d}) \, : \, x \in \delta_{D^s}(\mathbb{Z}^\mathbf{d}) \cap [0,1)^\mathbf{d}\}\,.
\]
The elements of $\mathcal{D}$ are called $D$-adic cubes. If $I\in\mathcal{D}_s$, then we say that $I$ has scale $s$ and write $s(I) = s$. Two $D$-adic cubes are either disjoint or one is contained in the other. Every $D$-adic cube $I$ of scale $s$ is contained in a unique $D$-adic cube $\hat I$ of scale $s+1$. Two $D$-adic cubes $I$ and $J$ are neighbours if they have the same scale and their closures intersect. We denote the union of $I$ and all its neighbours by $I^*$. A box $L$ is a cartesian product of intervals. Its center is denoted by $c(L)$, and we write 
\[
    rL = c(L) + \delta_r(L - c(L))
\]
for the box $L$ anisotropically dilated by a factor of $r$.

\subsection{Tiles}
Recall that $\mathcal{Q}$ is the vector space of all polynomials of degree at most $d$ on $\R^\mathbf{d}$ with vanishing constant term. We will decompose $\mathcal{Q}$ into \textit{uncertainty regions}. They play the role of the dyadic frequency intervals in the proofs \cite{Fefferman1973}, \cite{Lacey+2000}  of Carleson's theorem. In the setting of polynomial modulations, one has to choose the decomposition of the space of polynomials differently depending on the spatial location.

For a bounded subset $I \subset \R^\mathbf{d}$ with nonempty interior, define a norm $\| \cdot \|_I$ on $\mathcal{Q}$ by 
\[
    \|Q\|_I = \sup_{x, x' \in I} |Q(x) - Q(x')|\,.
\]
We will write $B_I(Q, r) = \{Q' \in \mathcal{Q} \, : \, \|Q' - Q\|_I \leq r \}$ for the closed balls with respect to this norm. 
The importance of the norms $\|\cdot\|_I$ lies in the fact that one can obtain upper bounds for oscillatory integrals with phase $Q$ over boxes $I$ with power decay in $\|Q\|_I$, see \cref{VanDerCorputA1}.

The following lemma allows us to compare the norms $\|\cdot\|_B$ for nested balls $B$:
\begin{lemma}
    \label{PolynomialBound}
    If $Q \in \mathcal{Q}$ and $B(x,r) \subset B(x, R) \subset \R^\mathbf{d}$ are euclidean balls then
    \[
        \|Q\|_{B(x,R)} \lesssim  (R/r)^d \|Q\|_{B(x,r)}
    \]
    and
    \[
        \|Q\|_{B(x,r)} \lesssim r/R \|Q\|_{B(x,R)}\,.
    \]
    If $B_\rho(x,r) \subset B_\rho(x,R) \subset \R^\mathbf{d}$ are anisotropic balls then
    \[
        \|Q\|_{B_\rho(x,R)} \lesssim (R/r)^{d \alpha_\mathbf{d}} \|Q\|_{B_\rho(x,r)}
    \]
    and
    \[
        \|Q\|_{B_\rho(x,r)} \lesssim (r/R)^{\alpha_1} \|Q\|_{B_\rho(x,R)}\,.
    \]
\end{lemma}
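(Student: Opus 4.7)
\medskip

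\noindent\textbf{Proof plan.} The four inequalities all follow from the same scheme: reduce to a polynomial $P$ with $P(0)=0$ via translation, rescale the ball to the unit ball by an appropriate (isotropic or anisotropic) dilation, and then exploit equivalence of norms on the finite-dimensional vector space of polynomials of degree at most $d$.

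First I would observe the general fact that for any bounded set $I$ with center $c \in I$,
\[
 \sup_{y \in I}|Q(y) - Q(c)| \;\leq\; \|Q\|_I \;\leq\; 2\sup_{y \in I}|Q(y)-Q(c)|,
\]
so it suffices to compare the quantities $\sup_{y \in B_\rho(x,r)}|P(y)|$ and $\sup_{y \in B_\rho(x,R)}|P(y)|$, where $P(y) := Q(x+y) - Q(x)$. This $P$ is a polynomial of degree at most $d$ with $P(0) = 0$. Write $P(y) = \sum_{|\beta| \leq d} a_\beta y^\beta$, noting $a_0 = 0$.

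Next I would perform an anisotropic rescaling. Because $B_\rho(0,r) = \delta_r(B(0,1))$, setting $\tilde P(z) := P(\delta_r z)$ and $\bar P(z) := P(\delta_R z)$ gives
\[
 \sup_{B_\rho(x,r)}|P| = \sup_{B(0,1)}|\tilde P|, \qquad \sup_{B_\rho(x,R)}|P| = \sup_{B(0,1)}|\bar P|.
\]
A direct computation yields $\tilde P(z) = \sum_\beta a_\beta r^{|\beta|_\alpha} z^\beta$ and $\bar P(z) = \sum_\beta a_\beta R^{|\beta|_\alpha} z^\beta$, where $|\beta|_\alpha := \sum_i \alpha_i \beta_i$ is the anisotropic weight. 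Writing $c_\beta := a_\beta R^{|\beta|_\alpha}$, the key identity
\[
 \tilde P(z) = \sum_\beta c_\beta (r/R)^{|\beta|_\alpha} z^\beta
\]
compares the two rescaled polynomials through the ratio $r/R \leq 1$.

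Now the finite-dimensional linear algebra kicks in. On the space $V$ of polynomials of degree at most $d$ in $\mathbf{d}$ variables, the sup-norm over $B(0,1)$ is equivalent to the norm $\sum_{|\beta| \leq d}|c_\beta|$, with constants depending only on $d$ and $\mathbf{d}$. Applied to $\bar P$ this gives $\sum_\beta |c_\beta| \lesssim \sup_{B(0,1)}|\bar P|$. For the first anisotropic inequality I would reverse the roles, starting from coefficients of $\tilde P$ and using that $|\beta|_\alpha \leq d\alpha_\mathbf{d}$ for $|\beta| \leq d$ together with $R/r \geq 1$ to get $\sup_{B(0,1)}|\bar P| \lesssim (R/r)^{d\alpha_\mathbf{d}} \sup_{B(0,1)}|\tilde P|$. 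For the second anisotropic inequality, since $a_0 = 0$ implies $c_0 = 0$, every remaining $\beta$ satisfies $|\beta|_\alpha \geq \alpha_1$, and $(r/R)^{|\beta|_\alpha} \leq (r/R)^{\alpha_1}$ yields
\[
 \sup_{B(0,1)}|\tilde P| \leq \sum_{\beta \neq 0}|c_\beta|(r/R)^{|\beta|_\alpha} \leq (r/R)^{\alpha_1}\sum_\beta |c_\beta| \lesssim (r/R)^{\alpha_1}\sup_{B(0,1)}|\bar P|.
\]
The two euclidean inequalities are the special case $\alpha_1 = \cdots = \alpha_\mathbf{d} = 1$, so $|\beta|_\alpha = |\beta|$ and the exponents $d\alpha_\mathbf{d}$ and $\alpha_1$ specialize to $d$ and $1$. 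The argument is entirely routine; the only point requiring attention is keeping track of the fact that $P(0)=0$ is used exactly once (to exclude the constant coefficient and thereby get a nontrivial lower bound on $|\beta|_\alpha$ in the second and fourth inequalities).
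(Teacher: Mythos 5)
Your proof is correct. The main difference from the paper is how the anisotropic case is handled: the paper first proves the two euclidean estimates (by normalizing and invoking the fact that the coefficients of a degree-$d$ polynomial are linear combinations of finitely many of its values on the unit ball, whence bounded coefficients give $\sup_{B(0,R)}|Q| \lesssim R^d$ and, using $Q(0)=0$, $\sup_{B(0,r)}|Q| \lesssim r$), and then \emph{deduces} the anisotropic estimates from the euclidean ones via the inclusions $B_\rho(0,R) \subset B(0,R^{\alpha_\mathbf{d}})$ for $R \geq 1$ and $B_\rho(0,r) \subset B(0,r^{\alpha_1})$ for $r \leq 1$. You instead rescale directly by the anisotropic dilation $\delta_r$ (using $\delta_r(B(0,1)) = B_\rho(0,r)$), track the weighted degree $|\beta|_\alpha$ of each monomial, and use equivalence of norms on the finite-dimensional space of polynomials; the euclidean case then drops out as the special case $\alpha = (1,\dots,1)$. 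Both routes rest on the same finite-dimensional linear-algebra fact, but yours treats the four inequalities uniformly and avoids the two-step reduction, while the paper's is shorter modulo the standard euclidean facts it cites. Both give the exponents $d\alpha_\mathbf{d}$ and $\alpha_1$; your bookkeeping with $|\beta|_\alpha$ also makes transparent exactly where the vanishing constant term $c_0 = 0$ is used, namely to get the lower bound $|\beta|_\alpha \geq \alpha_1$ for the decay direction.
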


\begin{proof}
    We start with the first inequality. Applying a translation and dilation, we can assume $x = 0$, $r = 1$ and $\|Q\|_{B(x,1)} = 1$. Then we have to show that 
    \[
        \sup_{B(0,R)} |Q(x)| \lesssim R^d\,.
    \]
    The coefficients of $Q$ can be expressed as linear combinations of finitely many values of $Q$ in the unit ball, see e.g. \cite{Nicolaides1972}. Hence the coefficients are $\lesssim_{d, \mathbf{d}} 1$, which implies the estimate.  For the second inequality we assume similarly $x = 0$, $R = 1$ and $\|Q\|_{B(x, 1)} = 1$. Then we have to show
    \[
        \sup_{B(0,r)} |Q(x)| \lesssim r\,,
    \]
    which is true by the same argument. The estimates in the anisotropic setting follows from those in the isotropic setting and the inclusions $B_\rho(0, R) \subset B(0, R^{\alpha_\mathbf{d}})$ for $R \geq 1$ and $B_\rho(0,r) \subset B(0, r^{\alpha_1})$ for $r \leq 1$.
\end{proof}

\begin{corollary}[\cite{ZK2021}, Cor. 2.9]
    \label{ComparingPolBound}
    If $D$ is chosen sufficiently large then for every $I \in \mathcal{D}$ and $Q \in \mathcal{Q}$ it holds that
    \begin{align}
        \label{ComparingPolBoundEqn}
        \|Q\|_{\hat I} \geq 10^4 \|Q\|_I\,.
    \end{align}
\end{corollary}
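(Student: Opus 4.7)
The plan is to deduce the estimate from \cref{PolynomialBound} by chaining two applications---the second (Markov-type decay) anisotropic inequality applied to concentric balls at $c(I)$ to produce the factor $D^{-\alpha_1}$, and the first (Markov-type growth) anisotropic inequality at $c(\hat I)$ to compare a slight enlargement of $\hat I$ with $\hat I$ itself. The main obstacle I anticipate is that $c(I)$ can lie close to the boundary of $\hat I$ when $I$ is a corner child: in that case, no anisotropic ball around $c(I)$ of $\rho$-radius $\sim D^{s+1}$ fits inside $\hat I$, so a direct application of \cref{PolynomialBound} centered at $c(I)$ cannot produce the required factor $D^{\alpha_1}$. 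The workaround is to let the big ball around $c(I)$ bulge outside $\hat I$ into a fixed anisotropic dilate $C\hat I$, and then pay a constant price to pass back to $\hat I$.

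Concretely, I would set $s = s(I)$ and $c = c(I)$ and first record the elementary inclusions $I \subset B_\rho(c, C_1 D^s)$ and $B_\rho(c(\hat I), c_2 D^{s+1}) \subset \hat I \subset B_\rho(c(\hat I), C_3 D^{s+1})$ for constants $C_1, c_2, C_3$ depending only on $\alpha$ and $\mathbf{d}$. The key geometric observation I would then verify is that $B_\rho(c, D^{s+1}) \subset C\hat I$ for $C = 3^{1/\alpha_1}$. Indeed $|c_i - c(\hat I)_i| < D^{(s+1)\alpha_i}/2$ because $I$ is a child of $\hat I$, while $B_\rho(c, D^{s+1})$ extends by at most $D^{(s+1)\alpha_i}$ from $c$ in direction $i$, so the ball is contained in the box centered at $c(\hat I)$ with half-widths $\tfrac{3}{2} D^{(s+1)\alpha_i}$. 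Since $C^{\alpha_i} = 3^{\alpha_i/\alpha_1} \geq 3$ for every $i$ thanks to $\alpha_i \geq \alpha_1$, this enlarged box sits in $C\hat I$.

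With these inclusions in place, the two applications of \cref{PolynomialBound} proceed as follows. The second anisotropic inequality at $c$ with radii $C_1 D^s < D^{s+1}$ gives $\|Q\|_{B_\rho(c, C_1 D^s)} \lesssim (C_1/D)^{\alpha_1} \|Q\|_{B_\rho(c, D^{s+1})}$. The first anisotropic inequality at $c(\hat I)$ with radii $c_2 D^{s+1} < 3^{1/\alpha_1} C_3 D^{s+1}$, combined with $C\hat I \subset B_\rho(c(\hat I), 3^{1/\alpha_1} C_3 D^{s+1})$ and $B_\rho(c(\hat I), c_2 D^{s+1}) \subset \hat I$, yields $\|Q\|_{C\hat I} \lesssim \|Q\|_{\hat I}$. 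Chaining these bounds through $I \subset B_\rho(c, C_1 D^s)$ and $B_\rho(c, D^{s+1}) \subset C\hat I$ produces $\|Q\|_I \lesssim D^{-\alpha_1} \|Q\|_{\hat I}$. The implicit constant is then absorbed into the factor $10^{-4}$ by choosing $D$ large enough depending on $\alpha, \mathbf{d}$ and $d$, which is \eqref{ComparingPolBoundEqn}.
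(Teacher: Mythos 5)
Your proof is correct and follows essentially the same route as the paper's: two applications of \cref{PolynomialBound}, the first extracting the decay factor in $D$ by comparing a small ball around $c(I)$ with a ball of $\rho$-radius $\sim D^{s+1}$, and the second paying a constant price to pass from a bounded enlargement of $\hat I$ back to $\hat I$ itself. The only stylistic difference is that the paper first normalizes by an anisotropic dilation and translation so that $\hat I = [0,1)^\mathbf{d}$ and then runs the argument with \emph{isotropic} balls via the first two inequalities of \cref{PolynomialBound}, whereas you keep the anisotropic balls throughout and invoke the anisotropic inequalities directly; this gives you the (slightly sharper, but equally serviceable) factor $D^{-\alpha_1}$ in place of the paper's $D^{-1}$. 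Your explicit discussion of the corner-child obstruction and the $C = 3^{1/\alpha_1}$ bulge is a correct spelling-out of the step the paper handles implicitly by enlarging to $B(c([0,1)^\mathbf{d}), 2\mathbf{d}^{1/2})$ and shrinking back.
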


\begin{proof}
    By precomposing $Q$ with an anisotropic dilation and a translation, we can assume that $\hat I = [0,1)^\mathbf{d}$. Then by \cref{PolynomialBound}
    \begin{align*}
        \|Q\|_I &\leq \|Q\|_{B(c(I), \mathbf{d}^{1/2}D^{-1}) } \lesssim D^{-1}  \|Q\|_{B(c(I), \mathbf{d}^{1/2})} \\
        &\leq D^{-1} \|Q\|_{B(c([0,1)^\mathbf{d}), 2\mathbf{d}^{1/2})}\lesssim D^{-1}  \|Q\|_{B(c([0,1)^\mathbf{d}), 1/2)} \leq D^{-1}\|Q\|_{[0,1)^\mathbf{d}}\,.
    \end{align*}
    Hence \eqref{ComparingPolBoundEqn} holds for sufficiently large $D$ depending only on $d$, $\mathbf{d}$.
\end{proof}

\begin{definition}[\cite{ZK2021}, Def. 2.11]
    A \textit{pair} $\mathfrak{p}$ consists of a $D$-adic cube $I_\mathfrak{p}$, called the \textit{spatial cube} of $\mathfrak{p}$, and a Borel-measurable subset $\mathcal{Q}(\mathfrak{p}) \subset \mathcal{Q}$, called the \textit{uncertainty region} of the pair. We denote by $s(\mathfrak{p}) = s(I_\mathfrak{p})$ the \textit{scale} of the pair.
\end{definition}

We are now ready to construct the collection of all tiles.
\begin{lemma}[\cite{ZK2021}, Lem. 2.12]
    \label{PairsLemma}
    There exist collections of pairs $\mathfrak{P}_I$, indexed by $I \in \mathcal{D}$ with $s_{\min} \leq s(I) \leq s_{\max}$, such that the following holds:
    \begin{enumerate}[label=(\arabic*)]
        \item \label{PairsLemma1} For every pair $\mathfrak{p} \in \mathfrak{P}_I$ there is a \textit{central polynomial} $Q_{\mathfrak{p}} \in \mathcal{Q}(\mathfrak{p})$ with
        \[
            B_I(Q_\mathfrak{p}, 0.2) \subset \mathcal{Q}(\mathfrak{p}) \subset B_I(Q_\mathfrak{p}, 1)\,
        \]
        \item \label{PairsLemma2} for each $I \in \mathcal{D}$ the uncertainty regions $\mathcal{Q}(\mathfrak{p})$, $\mathfrak{p} \in \mathfrak{P}_I$ form a disjoint cover of $\mathcal{Q}$
        \item \label{PairsLemma3} if $I \subset I'$, $\mathfrak{p} \in \mathfrak{P}_I$ and $\mathfrak{p}' \in \mathfrak{P}_{I'}$, then either $\mathcal{Q}(\mathfrak{p}) \cap \mathcal{Q}(\mathfrak{p}') = \emptyset$ or $\mathcal{Q}(\mathfrak{p}') \subset \mathcal{Q}(\mathfrak{p})$.
    \end{enumerate}
\end{lemma}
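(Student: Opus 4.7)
The plan is to construct the collections $\mathfrak{P}_I$ by downward induction on $s(I)$, starting with $s(I)=s_{\max}$. At each stage the work takes place inside the finite-dimensional real vector space $\mathcal{Q}$ equipped with the norm $\|\cdot\|_I$; this is a genuine norm, since if $\|Q\|_I = 0$ then $Q$ is constant on a nonempty open set, hence constant on $\R^\mathbf{d}$, hence zero by the requirement that elements of $\mathcal{Q}$ have vanishing constant term. The dimension of $\mathcal{Q}$ depends only on $\mathbf{d}$ and $d$. At every scale my partition of $\mathcal{Q}$ will be built from a maximal $0.5$-separated subset of $(\mathcal{Q},\|\cdot\|_I)$, which exists by Zorn's lemma and can be chosen countable by separability; I will use a fixed measurable enumeration of this set to break ties in a measurable way.

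For the base case $s(I) = s_{\max}$, choose a maximal $0.5$-separated set $\{Q_\mathfrak{p}\}_{\mathfrak{p} \in \mathfrak{P}_I}$ and let $\mathcal{Q}(\mathfrak{p})$ be the set of $Q \in \mathcal{Q}$ for which $Q_\mathfrak{p}$ is the nearest center in $\|\cdot\|_I$-norm, with ties broken by the enumeration. Maximality gives $\min_{\mathfrak{p}'}\|Q - Q_{\mathfrak{p}'}\|_I \le 0.5$ for all $Q$, so each cell is contained in $B_I(Q_\mathfrak{p}, 0.5) \subset B_I(Q_\mathfrak{p}, 1)$. Conversely, if $Q \in B_I(Q_\mathfrak{p}, 0.2)$, then for any other center $\|Q - Q_{\mathfrak{p}'}\|_I \ge \|Q_\mathfrak{p} - Q_{\mathfrak{p}'}\|_I - 0.2 > 0.3$, so $Q_\mathfrak{p}$ is strictly nearest and hence $Q \in \mathcal{Q}(\mathfrak{p})$. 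This verifies (1), and (2) is built into the nearest-point construction.

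For the inductive step from the parent $\hat I$ to $I$, the key observation is that \cref{ComparingPolBound} together with property (1) at scale $\hat I$ gives
\[
    \mathcal{Q}(\mathfrak{p}') \subset B_{\hat I}(Q_{\mathfrak{p}'}, 1) \subset B_I(Q_{\mathfrak{p}'}, 10^{-4}) \quad\text{for each } \mathfrak{p}' \in \mathfrak{P}_{\hat I},
\]
so the entire parent partition is made of pieces whose $\|\cdot\|_I$-diameter is negligible. Again choose a maximal $0.5$-separated set $\{Q_\mathfrak{p}\}_{\mathfrak{p} \in \mathfrak{P}_I}$ in $(\mathcal{Q}, \|\cdot\|_I)$, assign each $\mathfrak{p}' \in \mathfrak{P}_{\hat I}$ to the center nearest its central polynomial $Q_{\mathfrak{p}'}$ (with tie-breaking by enumeration), and set
\[
    \mathcal{Q}(\mathfrak{p}) = \bigcup\{\mathcal{Q}(\mathfrak{p}') : \mathfrak{p}' \text{ assigned to } \mathfrak{p}\}.
\]
Then $\mathcal{Q}(\mathfrak{p}) \subset B_I(Q_\mathfrak{p}, 0.5 + 10^{-4}) \subset B_I(Q_\mathfrak{p}, 1)$ by the triangle inequality. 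If $Q \in B_I(Q_\mathfrak{p}, 0.2)$ and $\mathfrak{p}' \in \mathfrak{P}_{\hat I}$ is the unique tile with $Q \in \mathcal{Q}(\mathfrak{p}')$, then $\|Q_{\mathfrak{p}'} - Q_\mathfrak{p}\|_I \le 10^{-4} + 0.2 < 0.3$, while for every other center $\|Q_{\mathfrak{p}'} - Q_{\mathfrak{p}''}\|_I > 0.5 - 0.3 = 0.2 > \|Q_{\mathfrak{p}'} - Q_\mathfrak{p}\|_I$, so $\mathfrak{p}'$ is assigned to $\mathfrak{p}$ and $Q \in \mathcal{Q}(\mathfrak{p})$. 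This proves (1) at scale $I$, and (2) is again immediate; property (3) for the parent-child case $I' = \hat I$ is built into the construction, since each $\mathcal{Q}(\mathfrak{p})$ is by definition a disjoint union of the $\mathcal{Q}(\mathfrak{p}')$'s belonging to it.

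For the general case of (3) with $I \subsetneq I'$ separated by several scales, I would iterate: take the chain of ancestors $I = I_0 \subsetneq I_1 \subsetneq \cdots \subsetneq I_k = I'$ and pick any $Q \in \mathcal{Q}(\mathfrak{p}) \cap \mathcal{Q}(\mathfrak{p}')$; property (2) at each intermediate scale selects a unique $\mathfrak{p}_j \in \mathfrak{P}_{I_j}$ containing $Q$, and the parent-child case applied $k$ times yields $\mathcal{Q}(\mathfrak{p}') = \mathcal{Q}(\mathfrak{p}_k) \subset \cdots \subset \mathcal{Q}(\mathfrak{p}_0) = \mathcal{Q}(\mathfrak{p})$. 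I expect no serious obstacle: the numerical slack provided by \cref{ComparingPolBound} (a factor $10^4$) is overwhelmingly larger than what is required to make the radii $0.2$ and $1$ in property (1) consistent across scales, and the only mildly delicate point is ensuring measurability of $\mathcal{Q}(\mathfrak{p})$, which is handled by the fixed enumeration of the separated net.
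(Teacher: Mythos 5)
Your construction is essentially the same as the paper's proof: both proceed by downward induction on scale, choosing maximal separated nets in $(\mathcal{Q},\|\cdot\|_I)$, building a Voronoi-type partition at the bottom scale, and at coarser scales defining each cell as the union of parent cells assigned to a nearby net point, with \cref{ComparingPolBound} supplying the numerical slack (the paper uses a $0.7$-separated net and routes the assignment through a preliminary partition rather than an explicit nearest-center rule, but the mechanism is identical). One minor arithmetic slip in your inductive lower-bound step: you write $0.2 > \|Q_{\mathfrak{p}'}-Q_\mathfrak{p}\|_I$, but you only established $\|Q_{\mathfrak{p}'}-Q_\mathfrak{p}\|_I \le 0.2 + 10^{-4}$; the correct comparison is $\|Q_{\mathfrak{p}'}-Q_{\mathfrak{p}''}\|_I \ge 0.5 - (0.2 + 10^{-4}) > 0.2 + 10^{-4} \ge \|Q_{\mathfrak{p}'}-Q_\mathfrak{p}\|_I$, which still forces $\mathfrak{p}'$ to be assigned to $\mathfrak{p}$, so the argument is fine after this repair.
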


\begin{proof}
    For each $D$-adic cube $I \in \mathcal{D}$ choose a maximal set of polynomials $\mathcal{Q}_I \subset \mathcal{Q}$ with the property that $\|Q - Q'\|_I \geq 0.7$ for all $Q, Q' \in \mathcal{Q}_I$. 
    Then the balls $B_I(Q, 0.3)$, $Q \in \mathcal{Q}_I$ are disjoint and the balls $B_I(Q, 0.7)$, $Q \in \mathcal{Q}_I$ cover $\mathcal{Q}$ by maximality. 
    Pick an enumeration $\mathcal{Q}_I = \{Q_j \, :\, j \in \mathbb{N}\}$ and set 
    \[
    \tilde{\mathcal{Q}}(I, Q_j) = B_I(Q_j, 0.7) \setminus \bigcup_{k < j} \tilde{\mathcal{Q}}(I, Q_k) \setminus \bigcup_{k\neq j} B(Q_k, 0.3)\,.
    \]
    This yields a partition
    \[
        \mathcal{Q} = \bigcup_{Q \in \mathcal{Q}_I} \tilde{ \mathcal{Q}}(I, Q)
    \]
    of $\mathcal{Q}$ with $B_I(Q, 0.3) \subset \tilde{\mathcal{Q}}(I, Q) \subset B_I(Q, 0.7)$ for all $Q \in \mathcal{Q}_I$. 
    
    The partitions $\tilde{\mathcal{Q}}(I, Q)$ satisfy \ref{PairsLemma1} and \ref{PairsLemma2}. We now modify them in order of decreasing scale of $I$ to obtain condition \ref{PairsLemma3}. For $s = s_{\max}$, we set $\mathcal{Q}(I, Q) = \tilde{\mathcal{Q}}(I, Q)$. Now assume that we have constructed partitions $\mathcal{Q}(I, Q)$, $Q \in \mathcal{Q}_I$ of $\mathcal{Q}$ satisfying condition \ref{PairsLemma3} for all $I \in \mathcal{D}$ of scale larger than $s$. Let $I \in \mathcal{D}_s$. Define for $Q \in \mathcal{Q}_I$
    \[
        \mathcal{Q}(I, Q) = \bigcup_{\hat Q \in \mathcal{Q}_{\hat I} \cap \tilde{\mathcal{Q}}(I, Q)} \mathcal{Q}(\hat I, \hat Q)\,.
    \]
    Then the sets $\mathcal{Q}(I, Q)$, $Q \in \mathcal{Q}_I$ form a partition of $\mathcal{Q}$ and condition \ref{PairsLemma3} holds for this $I$. It remains to check that \ref{PairsLemma1} still holds, i.e. that $B_I(Q, 0.2) \subset \mathcal{Q}(I, Q) \subset B_I(Q, 1)$.
    Pick $\tilde Q \in \mathcal{Q}(I, Q)$. There is some $\hat Q \in \mathcal{Q}_{\hat I}\cap \tilde{\mathcal{Q}}(I, Q)$ with $\tilde{ Q} \in \mathcal{Q}(\hat I, \hat Q)$. Hence
    \begin{align*}
        \|\tilde Q - Q\|_I &\leq \|\hat Q - Q\|_I + \|\tilde Q - \hat Q\|_I\\
        &\leq \|\hat Q - Q\|_I + 10^{-4} \|\tilde Q - \hat Q\|_{\hat I}\leq  0.7 + 10^{-4} \leq 1\,.
    \end{align*}
    On the other hand, if $\tilde{Q} \notin \mathcal{Q}(I, Q)$, then there is some $\hat Q \in \mathcal{Q}_{\hat I}\setminus \tilde{\mathcal{Q}}(I, Q)$ with $\tilde{Q} \in \mathcal{Q}(\hat I, \hat Q)$. Therefore
    \begin{align*}
        \|\tilde Q - Q\|_I\geq \|\hat Q - Q\|_I - \|\tilde Q - \hat Q\|_I\geq \|\hat Q - Q\|_I - 10^{-4} \|\tilde Q - \hat Q\|_{\hat I} \geq 0.2\,.
    \end{align*}
    Thus, the partitions $\mathcal{Q}(I, Q)$ satisfy conditions \ref{PairsLemma1} to \ref{PairsLemma3}. 
\end{proof}

From now on, we fix collections of pairs $\mathfrak{P}_I$ as in \cref{PairsLemma} and define the set of all tiles
\[
    \mathfrak{P} = \bigcup_{I \in \mathcal{D}, s_{\min} \leq s(I)\leq s_{\max}} \mathfrak{P}_I\,.
\]
For each $\mathfrak{p} \in \mathfrak{P}$, we further fix a choice of central polynomial $Q_\mathfrak{p}$ such that condition \ref{PairsLemma1} of \cref{PairsLemma} holds.

Given a pair $\mathfrak{p}$, we define
\[
    E(\mathfrak{p}) = \{x \in I_\mathfrak{p}\, : \, Q_x \in \mathcal{Q}(\mathfrak{p}) \, \text{and} \, \underline \sigma(x) \leq s(\mathfrak{p}) \leq  \overline \sigma(x) \} 
\]
and 
\[
    \overline{E}(\mathfrak{p}) = \{x \in I_\mathfrak{p}\, : \, Q_x \in \mathcal{Q}(\mathfrak{p})\}\,.
\]
The larger sets $\overline{E}(\mathfrak{p})$ will be used in arguments exploiting the smallness of the density of tiles in the proof of \cref{AntichainsSeperation} and the proofs of \cref{SumTreeBound}. There it will be necessary to control the measure of the set of all $x \in I_\mathfrak{p}$ with $Q_x \in \mathcal{Q}(\mathfrak{p})$, regardless of $\sigma(x)$. 

The operator associated to the tile $\mathfrak{p}$ is defined as
\[  
    T_\mathfrak{p}f(x) = \mathbf{1}_{E(\mathfrak{p})}(x) \int_{x + \mathbb{T}^\mathbf{d}} K_{s(\mathfrak{p})}(x - y) e(Q_x(x) - Q_x(y)) f(y) \, \mathrm{d}y\,, 
\]
its adjoint is
\[
    T_\mathfrak{p}^*g(y) = \int_{y + \mathbb{T}^\mathbf{d}} e(-Q_x(x) + Q_x(y)) \overline{K_{s(\mathfrak{p})}(x -y)} (\mathbf{1}_{E(\mathfrak{p})}g)(x) \, \mathrm{d}x\,.
\]
Clearly $T_\mathfrak{p} f$ vanishes outside of $E(\mathfrak{p})$, and for all functions $g$ it holds that
\[
    \spt T_\mathfrak{p}^* g  \subset I_\mathfrak{p}^*\,.
\]
Given any subset $\mathfrak{S} \subset \mathfrak{P}$ we define
\[
    T_\mathfrak{S} = \sum_{\mathfrak{p} \in \mathfrak{S}} T_\mathfrak{p}\,.
\]
Then we have in particular that $T_\mathbb{T}^{Q, \sigma} = T_\mathfrak{P}$. 

\subsection{Organizing the Set of Tiles}
Now we further organize the set of all tiles into forests and antichains, following \cite{ZK2021}.

We define a partial order $\leq$ on the set of all tiles, similarly to the order introduced in \cite{Fefferman1973}. 
\begin{definition}
Let $\mathfrak{p}$, $\mathfrak{p}'$ be pairs. We say that 
\begin{itemize}
    \item $\mathfrak{p} < \mathfrak{p}'$ if $I_\mathfrak{p} \subsetneq I_{\mathfrak{p}'}$ and $\mathcal{Q}(\mathfrak{p}') \subset \mathcal{Q}(\mathfrak{p})$
    \item $\mathfrak{p} \leq \mathfrak{p}'$ if $I_\mathfrak{p} \subset I_{\mathfrak{p}'}$ and $\mathcal{Q}(\mathfrak{p}') \subset \mathcal{Q}(\mathfrak{p})$.
\end{itemize}
\end{definition}

\Cref{MainThmAnisotopic} and \cref{MainThmWeakBound1} are proven by estimating the distribution function $\lvert\{\lvert T_\mathfrak{P} f(x)\rvert > \lambda\}\rvert$ of $T_\mathfrak{P}f$, see Section 5. From now on we will therefore fix the parameter $\lambda > 10e$.
\begin{lemma}
    \label{FirstExceptionalSet}
    There exists an exceptional set $E_1 \subset \mathbb{T}^\mathbf{d}$ which is a disjoint union of $D$-adic cubes, such that the following holds:
    \begin{enumerate}[label=(\arabic*)]
         \item \label{FirstExceptionalSet1} $|E_1| \lesssim \lambda^{-2}$
         \item \label{FirstExceptionalSet2} Let 
         \[
            \mathfrak{P}_{good} = \{ \mathfrak{p} \in \mathfrak{P} \, : \,  I_\mathfrak{p} \not\subset E_1\}\,.
         \]
         For every $n \geq 1$ the set of tiles
         \[
            \mathfrak{M}_n = \{\mathfrak{p} \in \mathfrak{P}_{good} \, \text{maximal w.r.t. $\leq$ s.t.} \,  |\overline{E}(\mathfrak{p})|/|I_\mathfrak{p}| \geq 2^{-n}\}
         \]
         satisfies 
         \begin{equation}
            \label{MaxTilesOverlapEqn}
            \bigg\| \sum_{\mathfrak{p} \in \mathfrak{M}_n} \mathbf{1}_{I_\mathfrak{p}} \bigg\|_{\infty} \lesssim 2^n \log(n+1)\log(\lambda) \,. 
         \end{equation}
     \end{enumerate}
\end{lemma}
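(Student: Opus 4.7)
The plan is to introduce an auxiliary counting function $C_n(x) := \#\{\mathfrak{p} \in \mathfrak{M}_n^{\mathrm{all}} : x \in I_\mathfrak{p}\}$, where $\mathfrak{M}_n^{\mathrm{all}}$ is defined exactly as $\mathfrak{M}_n$ in the lemma but with $\mathfrak{P}_{good}$ replaced by the full collection $\mathfrak{P}$. The exceptional set will then be the union, over $n \geq 1$, of the super-level sets $\{C_n > K_n\}$ for a threshold of size roughly $K_n \sim 2^n(\log(n+1) + \log\lambda) \lesssim 2^n \log(n+1)\log\lambda$. Since each $C_n$ is constant on $D$-adic cubes of the smallest available scale, these super-level sets are automatically unions of $D$-adic cubes, and writing $E_1$ as a disjoint union of its maximal $D$-adic subcubes yields the form required by the lemma.

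My first step is to establish that the sets $\overline{E}(\mathfrak{p})$ for $\mathfrak{p} \in \mathfrak{M}_n^{\mathrm{all}}$ are pairwise disjoint: a common point would force the spatial cubes to be nested and then, by property~(3) of \cref{PairsLemma}, the uncertainty regions to be nested as well, contradicting the antichain property of the set of maximal tiles. Combined with the density bound $|I_\mathfrak{p}| \leq 2^n |\overline{E}(\mathfrak{p})|$, this disjointness gives $\sum_{\mathfrak{p}\in\mathfrak{M}_n^{\mathrm{all}},\, I_\mathfrak{p} \subset J} |I_\mathfrak{p}| \leq 2^n |J|$ for every $D$-adic cube $J$, so that $C_n$ has dyadic $\operatorname{BMO}$-norm $\lesssim 2^n$. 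The dyadic John-Nirenberg inequality then yields $|\{C_n > K_n\}| \lesssim \exp(-c K_n/2^n)$ once $K_n \geq C\cdot 2^n$, and the choice $K_n = C \cdot 2^n (\log(n+1) + \log\lambda)$ makes this $\lesssim (n^2 \lambda^2)^{-1}$; summing over $n$ delivers~\ref{FirstExceptionalSet1}.

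For~\ref{FirstExceptionalSet2}, I will first verify the inclusion $\mathfrak{M}_n \subset \mathfrak{M}_n^{\mathrm{all}}$: a tile $\mathfrak{p}' > \mathfrak{p}$ with $\mathfrak{p} \in \mathfrak{M}_n$ and density at least $2^{-n}$ cannot lie in $\mathfrak{P}_{good}$ by maximality of $\mathfrak{p}$ there, so $I_{\mathfrak{p}'} \subset E_1$, but then $I_\mathfrak{p} \subset I_{\mathfrak{p}'} \subset E_1$ contradicts $\mathfrak{p} \in \mathfrak{P}_{good}$. For $x \notin E_1$ the overlap bound in~\ref{FirstExceptionalSet2} is then immediate from $\sum_{\mathfrak{p}\in\mathfrak{M}_n}\mathbf{1}_{I_\mathfrak{p}}(x) \leq C_n(x) \leq K_n$. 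For $x \in E_1$ I let $J$ denote the maximal $D$-adic cube of $E_1$ containing $x$ and pick any $z \in \hat J \setminus E_1$, a set which is nonempty precisely because $J$ is maximal. Any $\mathfrak{p} \in \mathfrak{M}_n$ containing $x$ must have $I_\mathfrak{p} \not\subset J$ (since $\mathfrak{p} \in \mathfrak{P}_{good}$ and $J \subset E_1$) and hence $I_\mathfrak{p} \supset \hat J \ni z$, so the count at $x$ is dominated by the count at $z$, which is at most $K_n$.

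The main obstacle I expect is the passage from the density condition to the $\operatorname{BMO}$ bound on $C_n$: the product $\log(n+1)\log\lambda$ in the target threshold forces an exponential tail of rate $2^n$ on the distribution of $C_n$, and this is only available because the $\overline{E}(\mathfrak{p})$ are disjoint across \emph{all} scales. This scale-free disjointness is what distinguishes maximal tiles from arbitrary tiles of density $\geq 2^{-n}$, and it is the reason this single-exceptional-set argument can replace the stopping-generations argument used in \cite{ZK2021}.
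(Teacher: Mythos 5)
Your proposal is correct and follows essentially the same route as the paper: you define the auxiliary family of maximal tiles over all of $\mathfrak{P}$, derive the Carleson packing condition from the disjointness of the $\overline{E}(\mathfrak{p})$, invoke the dyadic John--Nirenberg inequality to control the super-level sets of the counting function, and then transfer the overlap bound back to $\mathfrak{M}_n$ via the inclusion $\mathfrak{M}_n \subset \mathfrak{M}_n^{\mathrm{all}}$. The only cosmetic differences are the additive threshold $2^n(\log(n+1)+\log\lambda)$ in place of the paper's multiplicative $2^n\log(n+1)\log\lambda$ (both are comparable for $n\geq 1$, $\lambda > 10e$), and the handling of $x\in E_1$ by comparison at a nearby point $z\in\hat J\setminus E_1$ instead of the paper's minimal-cube contradiction, which are logically equivalent.
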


\begin{proof}
    Let $\tilde{\mathfrak{M}}_n$ be the collection of maximal tiles $\mathfrak{p} \in \mathfrak{P}$ with respect to the ordering $\leq$  satisfying $|\overline{E}(\mathfrak{p})|/|I_\mathfrak{p}| \geq 2^{-n}$.
    Then the sets $\overline{E}(\mathfrak{p})$, $\mathfrak{p} \in \tilde{\mathfrak{M}}_n$ are pairwise disjoint: If $x \in \overline{E}(\mathfrak{p}) \cap \overline{E}(\mathfrak{p}')$ then $x \in I_\mathfrak{p} \cap I_{\mathfrak{p}'}$, thus without loss of generality $I_\mathfrak{p} \subset I_{\mathfrak{p}'}$. But also $Q_x \in \mathcal{Q}(\mathfrak{p}) \cap \mathcal{Q}(\mathfrak{p}')$, which implies $\mathcal{Q}(\mathfrak{p}') \subset \mathcal{Q}(\mathfrak{p})$. Therefore $\mathfrak{p} \leq \mathfrak{p}'$ and by maximality $\mathfrak{p} = \mathfrak{p}'$. The disjointness implies the Carleson packing condition
    \[
        \sum_{\mathfrak{p} \in \tilde{\mathfrak{M}}_n \, : \, I_\mathfrak{p} \subset J} |I_\mathfrak{p}| \leq 2^{n} \sum_{\mathfrak{p} \in \tilde{\mathfrak{M}}_n \, : \, I_\mathfrak{p} \subset J} |\overline{E}(\mathfrak{p})| \leq 2^n|J|
    \]
    for all $J \in \mathcal{D}$.
    Let $C$ be a large constant to be fixed later. Consider
    \[
        E_1 = \bigcup_{n \geq 1} \{x \in [0,1)^\mathbf{d}\, : \, \sum_{\mathfrak{p} \in \tilde{\mathfrak{M}}_n} \mathbf{1}_{I_\mathfrak{p}}(x) \geq C2^n \log(n+1)\}\,.
    \]
    By the John-Nirenberg inequality it holds that
    \[
        |E_1| \leq e^2 \sum_{n \geq 1} \exp(-\frac{1}{2e} \frac{C2^n \log(n+1)}{2^n}) = e^2 \sum_{n \geq 1} (n+1)^{-C/(2e)} \lesssim  2^{-C/(2e)}\,.
    \]
    Choosing $C = 4e \log(\lambda)/\log(2)$, we obtain $|E_1| \lesssim \lambda^{-2}$. Furthermore, the set $E_1$ satisfies condition \ref{FirstExceptionalSet2}. Indeed, with the definition of $\mathfrak{M}_n$ there, it holds that $\mathfrak{M}_n \subset \tilde{\mathfrak{M}}_n$. Assume that there was some point $x$ with 
    \[
        \sum_{\mathfrak{p} \in \mathfrak{M}_n} \mathbf{1}_{I_\mathfrak{p}}(x) > C2^n \log(n+1)\,.
    \]
    Let $\mathfrak{p} \in \mathfrak{M}_n$ be a tile with minimal $I_\mathfrak{p}$ such that $x \in I_\mathfrak{p}$. Then the above estimate holds on $I_\mathfrak{p}$, and since $\mathfrak{M}_n \subset \tilde{\mathfrak{M}}_n$ it follows that $I_\mathfrak{p} \subset E_1$, a contradiction to $\mathfrak{p} \in \mathfrak{P}_{good}$. 
\end{proof}

Now we turn to organizing the set of all good tiles $\mathfrak{P}_{good}$ into forests and antichains. 

\begin{definition}[\cite{ZK2021}, Sec. 3.2]
    A collection $\mathfrak{A}$ of tiles is called an \textit{antichain} if no elements of $\mathfrak{A}$ are comparable under $<$. A collection $\mathfrak{C}$ of tiles is called \textit{convex} if $\mathfrak{p}, \mathfrak{p}' \in \mathfrak{C}$ and $\mathfrak{p} \leq \mathfrak{p}'' \leq  \mathfrak{p}'$ implies $\mathfrak{p}'' \in \mathfrak{C}$. A collection $\mathfrak{D}$ of tiles is called a \textit{down subset} if $\mathfrak{p} \in \mathfrak{D}$, $\mathfrak{p}' \leq \mathfrak{p}$ implies $\mathfrak{p}' \in \mathfrak{D}$. 
\end{definition}

Given a tile $\mathfrak{p}$, we denote by $a\mathfrak{p}$ the pair $(I_\mathfrak{p}, B_{I_\mathfrak{p}}(Q_\mathfrak{p}, a))$. 

\begin{definition}[\cite{ZK2021}, Def. 3.13]
    A \textit{tree} is a convex collection of tiles $\mathfrak{T} \subset \mathfrak{P}_{good}$ together with a \textit{top tile} $\tp \mathfrak{T}$ such that, for all $\mathfrak{p} \in \mathfrak{T}$, we have $4\mathfrak{p} < \tp \mathfrak{T}$. We call $Q_\mathfrak{T} = Q_{\tp\mathfrak{T}}$ the \textit{central polynomial} and $I_\mathfrak{T} = I_{\tp \mathfrak{T}}$ the \textit{spatial cube} of the tree $\mathfrak{T}$.
\end{definition}

This definition is chosen so that the phase $Q_x$ is \enquote{almost constant} on the tree in the sense that $\|Q_x - Q_\mathfrak{T}\|_{I_\mathfrak{p}} \lesssim 1$ if $x \in E(\mathfrak{p})$ and $\mathfrak{p} \in \mathfrak{T}$. If it was constant, then the contribution $T_\mathfrak{T}f$ of the tree would be bounded by a maximally truncated singular integral $R^K M_{-Q_\mathfrak{T}} f$. Since it is only almost constant, there is an error term which is however dominated by the maximal function $M^K$.

For a tile $\mathfrak{p}$ and $Q \in \mathcal{Q}$, we denote
\[
    \Delta(\mathfrak{p}, Q) = \|Q_\mathfrak{p} - Q\|_{I_\mathfrak{p}} + 1\,.
\]

\begin{definition}[\cite{ZK2021}, Def. 3.15]
    Two trees $\mathfrak{T}_1$, $\mathfrak{T}_2$ are called \textit{$\Delta$-separated} if 
    \begin{align*}
        \mathfrak{p}_1 \in \mathfrak{T}_1, I_{\mathfrak{p}_1} \subset I_{\mathfrak{T}_2} &\implies \Delta(\mathfrak{p}_1, Q_{\mathfrak{T}_2}) > \Delta \\
        \mathfrak{p}_2 \in \mathfrak{T}_2, I_{\mathfrak{p}_2} \subset I_{\mathfrak{T}_1} &\implies \Delta(\mathfrak{p}_2, Q_{\mathfrak{T}_1}) > \Delta\,.
    \end{align*}
\end{definition}

\begin{definition}
    An \textit{$L^\infty$-forest of level $n$} is a disjoint union $\mathfrak{F} = \cup_j \mathfrak{T}_j$ of  $2^{\gamma n}$-separated trees such that
    \begin{equation}
        \label{ForestEqn}
        \|\sum_{j} \mathbf{1}_{I_{\mathfrak{T}_j}}\|_\infty \lesssim 2^n \log(n+1)\log(\lambda)\,.
    \end{equation}
    Here $\gamma$ is a constant that will be fixed in the proof of \cref{ForestBound}.
\end{definition}

The constant $\gamma$ will be of size $\sim \log \log(\lambda)$, and $\lambda$ will always be large enough to ensure $\gamma > 1$. This is different from \cite{ZK2021}, where it is a fixed absolute constant. We also note that the factor $\log \lambda$ does not occur in \cite{ZK2021}.

\begin{definition}
    Define the \textit{density} of a tile $\mathfrak{p}$ as
    \begin{equation}
        \label{densedefinition}
        \dens(\mathfrak{p}) = \sup_{a \geq 2} a^{-\dim \mathcal{Q}} \sup_{\mathfrak{p}' \in \mathfrak{P}_{good} : a \mathfrak{p} \leq a \mathfrak{p}'} \frac{
        |\overline{E}(a\mathfrak{p}')|}{|I_{\mathfrak{p}'}|}\,.
    \end{equation}
    For a collection of tiles $\mathfrak{S} \subset \mathfrak{P}_{good}$ we set $\dens(\mathfrak{S}) = \sup_{\mathfrak{p} \in \mathfrak{S}} \dens(\mathfrak{p})$.
\end{definition}

Let 
\[
    \mathfrak{h}_n = \{ \mathfrak{p} \in \mathfrak{P}_{good} \, : \, \dens(\mathfrak{p}) > C 2^{-n}\}\,,
\]
for an absolute constant $C \geq 1$ which is chosen in the proof of \cref{DecompositionTreeAntichain}. Note that the sets $\mathfrak{h}_n$ are down subsets and hence convex: If $\mathfrak{p}_1 \in \mathfrak{h}_n$ then there exists $\mathfrak{p}' \in \mathfrak{P}_{good}$ and $a \geq 2$ with $a\mathfrak{p}_1 \leq a\mathfrak{p}'$ and 
$$
    a^{-\dim \mathcal{Q}} \frac{|\overline{E}(a\mathfrak{p}')|}{|I_{\mathfrak{p}'}|} > C 2^{-n}\,.
$$
If $\mathfrak{p}_2 < \mathfrak{p}_1$ then $a\mathfrak{p}_2 \leq a\mathfrak{p}_1 \leq a\mathfrak{p}'$, hence $\mathfrak{p}'$ also witnesses that $\dens(\mathfrak{p}_2) > C2^{-n}$ and thus $\mathfrak{p}_2 \in \mathfrak{h}_n$.
\begin{lemma}[\cite{ZK2021}, Prop. 3.22]
    \label{DecompositionTreeAntichain}
    For every $n \geq 1$, the set $\mathfrak{h}_n$ can be represented as the disjoint union of $O(\gamma n^2 + \gamma n \log\log \lambda)$ antichains and $O(n + \log\log \lambda)$ $L^\infty$-forests of level $n$.
\end{lemma}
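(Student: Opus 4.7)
The plan is to decompose $\mathfrak{h}_n$ in three stages: identify maximal tiles as tree tops via the density condition, assemble candidate pre-trees beneath them, and then iteratively enforce $2^{\gamma n}$-separation by moving offending tiles into antichains. The $O(n+\log\log\lambda)$ count of forests will arise from partitioning the density witnesses dyadically, while the $O(\gamma n^2 + \gamma n \log\log\lambda)$ antichain count will come from $O(\gamma n)$ refinement rounds per forest.

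The first step is to observe that every tile $\mathfrak{p}$ that is maximal in $\mathfrak{h}_n$ under $\le$ has itself as density witness. Indeed, if $a\mathfrak{p} \le a\mathfrak{p}'$ witnesses $\dens(\mathfrak{p}) > C2^{-n}$ with $a \ge 2$, then
\[
    \mathcal{Q}(\mathfrak{p}') \subset B_{I_{\mathfrak{p}'}}(Q_{\mathfrak{p}'},1) \subset B_{I_{\mathfrak{p}'}}(Q_{\mathfrak{p}'},a) \subset B_{I_\mathfrak{p}}(Q_\mathfrak{p},a) \subset \mathcal{Q}(\mathfrak{p}),
\]
so $\mathfrak{p} \le \mathfrak{p}'$, and since $\mathfrak{p}'$ itself lies in $\mathfrak{h}_n$, maximality forces $\mathfrak{p}'=\mathfrak{p}$. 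Consequently, each maximal tile satisfies $|\overline{E}(2^k\mathfrak{p})| > C\,2^{k\dim\mathcal{Q}-n}|I_\mathfrak{p}|$ for some dyadic $2^k \ge 2$, and the trivial upper bound $|\overline{E}(2^k\mathfrak{p})| \le |I_\mathfrak{p}|$ forces $k = O(n)$. Dyadically split the maximal tiles of $\mathfrak{h}_n$ according to $k$ into $O(n)$ classes $\mathfrak{M}^{(k)}$.

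The second step is to bound the spatial overlap in each class. For incomparable $\mathfrak{p},\mathfrak{q}\in\mathfrak{M}^{(k)}$, the enlarged sets $\overline{E}(2^k\mathfrak{p})$ and $\overline{E}(2^k\mathfrak{q})$ are disjoint: a common point $x$ would have $Q_x$ in both uncertainty balls, which combined with nested $I_\mathfrak{p},I_\mathfrak{q}$ would yield comparability exactly as in the first step. This disjointness supplies a Carleson packing condition, and running the John--Nirenberg argument of \cref{FirstExceptionalSet} -- possibly further enlarging $E_1$ by a constant factor still compatible with $|E_1|\lesssim \lambda^{-2}$ -- yields
\[
    \Bigl\|\sum_{\mathfrak{p}\in\mathfrak{M}^{(k)}}\mathbf{1}_{I_\mathfrak{p}}\Bigr\|_\infty \lesssim 2^n\log(n+1)\log\lambda.
\]
The classes $\mathfrak{M}^{(k)}$ themselves contribute $O(n)$ antichains, and below each $\mathfrak{p}\in\mathfrak{M}^{(k)}$ the sub-collection $\widetilde{\mathfrak{T}}_\mathfrak{p} = \{\mathfrak{q}\in\mathfrak{h}_n : 4\mathfrak{q}<\mathfrak{p}\}$ is a convex candidate tree with top $\mathfrak{p}$. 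After canonicalizing the assignment of each remaining tile to a unique top (e.g.\ to the $\mathfrak{p}$ of smallest scale eligible), one obtains $O(n+\log\log\lambda)$ pre-forests satisfying \eqref{ForestEqn}, where the additional $\log\log\lambda$ absorbs the enlargement of $E_1$.

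The third step, which I expect to be the main obstacle, is to enforce $\Delta$-separation at scale $2^{\gamma n}$. I would iterate over pairs of trees $\mathfrak{T}_i,\mathfrak{T}_j$ within a pre-forest and, whenever some $\mathfrak{q}\in\mathfrak{T}_i$ with $I_\mathfrak{q}\subset I_{\mathfrak{T}_j}$ satisfies $\Delta(\mathfrak{q},Q_{\mathfrak{T}_j}) \le 2^{\gamma n}$, remove $\mathfrak{q}$ together with its upward cone in $\mathfrak{T}_i$ into an antichain. By \cref{ComparingPolBound} and \cref{PolynomialBound}, each generation of ancestry multiplies the relevant polynomial norm by at least a constant, so the number of dyadic $\Delta$-scales between $1$ and $2^{\gamma n}$ is $O(\gamma n)$; hence the procedure terminates after $O(\gamma n)$ rounds, each producing $O(1)$ antichains per forest. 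Summing over the $O(n+\log\log\lambda)$ forests yields $O(\gamma n^2 + \gamma n\log\log\lambda)$ antichains in total. The main subtleties will be to verify that the canonical top-assignment in step two preserves convexity of the trees, and that the $O(\gamma n)$ rounds of refinement do not inflate the overlap bound \eqref{ForestEqn}.
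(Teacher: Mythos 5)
Your first step contains an outright error. You write
\[
    \mathcal{Q}(\mathfrak{p}') \subset B_{I_{\mathfrak{p}'}}(Q_{\mathfrak{p}'},1) \subset B_{I_{\mathfrak{p}'}}(Q_{\mathfrak{p}'},a) \subset B_{I_\mathfrak{p}}(Q_\mathfrak{p},a) \subset \mathcal{Q}(\mathfrak{p})\,,
\]
but the last inclusion is backwards: by \cref{PairsLemma}, $\mathcal{Q}(\mathfrak{p}) \subset B_{I_\mathfrak{p}}(Q_\mathfrak{p},1)$, so for $a\geq 2$ the ball $B_{I_\mathfrak{p}}(Q_\mathfrak{p},a)$ strictly contains $\mathcal{Q}(\mathfrak{p})$ rather than being contained in it. Thus the claim that a $\le$-maximal tile of $\mathfrak{h}_n$ is its own density witness is not established, and with it the entire classification $\mathfrak{M}^{(k)}$ by witness scale $k$. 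A related problem infects Step 2: you assert that for incomparable $\mathfrak{p},\mathfrak{q}\in\mathfrak{M}^{(k)}$ the enlarged sets $\overline{E}(2^k\mathfrak{p})$, $\overline{E}(2^k\mathfrak{q})$ are disjoint \enquote{exactly as in the first step,} but the argument there is flawed, and on its own a common point $x$ with nested $I_\mathfrak{p}\subset I_\mathfrak{q}$ only gives that the two radius-$2^k$ uncertainty balls intersect at $Q_x$, not that $\mathcal{Q}(\mathfrak{q})\subset\mathcal{Q}(\mathfrak{p})$; since $2^k$ can be as large as $2^{n/\dim\mathcal{Q}}$, no nesting of the actual uncertainty regions follows, and the Carleson packing you invoke is unavailable.

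More structurally, your plan never passes through the crucial intermediate set $\mathfrak{C}_n=\{\mathfrak{p}: \exists\,\mathfrak{m}\in\mathfrak{M}_n,\ 2\mathfrak{p}<100\mathfrak{m}\}$ that the paper introduces precisely because the overlap bound \eqref{MaxTilesOverlapEqn} is available for $\mathfrak{M}_n$ (defined over $\mathfrak{P}_{good}$ with genuinely disjoint sets $\overline{E}(\mathfrak{m})$, which is exactly what gives the John--Nirenberg estimate) but not for $\le$-maximal tiles of $\mathfrak{h}_n$. Also, retroactively enlarging $E_1$ as you propose does not make sense within the argument, since $E_1$ determines $\mathfrak{P}_{good}$ and hence $\mathfrak{h}_n$ itself. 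Finally, in Step 3 you move \enquote{upward cones} into antichains, but an upward cone in a tree is not an antichain; the paper instead peels off the $\lceil\gamma n\rceil$ bottom layers of minimal tiles, each of which is automatically an antichain, and obtains separation by combining \cref{ComparingPolBound} across those layers with the separation condition \eqref{SeparationCondition} between trees. Your $O(\gamma n)$ and $O(n+\log\log\lambda)$ counts do match the targets, but the mechanisms offered to justify them do not go through, and the central combinatorial device of the paper (the dyadic split by $|\mathfrak{B}(\mathfrak{p})|$, the number of tops in $\mathfrak{M}_n$ above $100\mathfrak{p}$) is absent.
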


\begin{proof}
    We first want to restrict to the simpler set 
    \[
        \mathfrak{C}_n = \{\mathfrak{p} \in \mathfrak{P}_{good} \, : \, \exists \mathfrak{m} \in \mathfrak{M}_n, 2\mathfrak{p} < 100\mathfrak{m}\}\,,
    \]
    where $\mathfrak{M}_n$ is the set defined in \cref{FirstExceptionalSet}.
    To this end, we show that $\mathfrak{h}_n \setminus \mathfrak{C}_n$ can be decomposed into at most $n$ antichains.
    It suffices to show that there exists no chain $\mathfrak{p}_0 < \dotsb < \mathfrak{p}_n$ in $\mathfrak{h}_n \setminus \mathfrak{C}_n$: In that case we can iteratively define antichains $\mathfrak{A}_i$, $1 \leq i \leq n$ by taking $\mathfrak{A}_j$ as the maximal (w.r.t. $\leq$) tiles in $\mathfrak{h}_n \setminus \mathfrak{C}_n\setminus (\cup_{i <j} \mathfrak{A}_i)$. These are clearly antichains, and if there is no chain of length $n+1$, then $\cup_i \mathfrak{A}_i = \mathfrak{h}_n \setminus \mathfrak{C}_n$.
    Assume that there was a chain $\mathfrak{p}_0 < \dotsb < \mathfrak{p}_n$. By the definition of $\mathfrak{h}_n$, there exists a tile $\mathfrak{p}' \in \mathfrak{P}_{good}$ and $a \geq 2$ with $a \mathfrak{p}_n \leq a \mathfrak{p}'$ and 
    \begin{equation}
        \label{CombProofEqn1}
        C 2^{-n} < a^{-\dim \mathcal{Q}} |\overline{E}(a \mathfrak{p}')|/ |I_{\mathfrak{p}'}|\,.
    \end{equation}
    We claim that the set $\mathcal{Q}(a \mathfrak{p}') = B_{I_\mathfrak{p}'}(Q_{\mathfrak{p}'}, a)$ can be covered with $\lesssim a^{\dim \mathcal{Q}}$ uncertainty regions $Q(\mathfrak{p}'')$ of tiles $\mathfrak{p}''$ satisfying $I_{\mathfrak{p}'} = I_{\mathfrak{p}''}$. Indeed, by \cref{PairsLemma}, $B_{I_{\mathfrak{p}'}}(Q_{\mathfrak{p}'}, a)$ is contained in the union of all such uncertainty regions it intersects. On the other hand, each of these uncertainty regions is contained in $B_{I_{\mathfrak{p}'}}(Q_{\mathfrak{p}'},  a + 2)$. Furthermore, all of the uncertainty regions are disjoint and contain a ball in the norm $\| \cdot \|_{I_{\mathfrak{p}'}}$ of radius $0.2$. Therefore, one needs at most $(5(a + 2))^{\dim \mathcal{Q}}$ uncertainty regions $\mathcal{Q}(\mathfrak{p}'')$ to cover $B_{I_{\mathfrak{p}'}}(Q_{\mathfrak{p}'},a)$, as claimed.
    
    If  $x \in \overline{E}(a \mathfrak{p}')$, then $x \in I_{\mathfrak{p}'}$ and $Q_x \in \mathcal{Q}(a \mathfrak{p}')$. Thus
    \[
        \overline{E}(a \mathfrak{p}') \subset \bigcup \overline{E}(\mathfrak{p}'')\,,
    \]
    which implies by \eqref{CombProofEqn1} that one of the tiles $\mathfrak{p}''$ satisfies $C 2^{-n} \lesssim  |\overline{E}(\mathfrak{p}'')|/ |I_{\mathfrak{p}''}|$.
    We can now choose $C$ sufficiently large so that it satisfies $|I_{\mathfrak{p}''}| 2^{-n} \leq |\overline{E}(\mathfrak{p}'')|$.
    Then there exists, by \cref{FirstExceptionalSet}, some $\mathfrak{m} \in \mathfrak{M}_n$ with $\mathfrak{p}'' \leq \mathfrak{m}$.
    Equation \eqref{CombProofEqn1} also implies that
    \[
        a \leq C a^{\dim \mathcal{Q}} \leq 2^n |\overline{E}(a \mathfrak{p}')|/ |I_{\mathfrak{p}'}| \leq 2^n\,.
    \]
    Hence we have for all $Q \in \mathcal{Q}(100 \mathfrak{m})$ that
    \begin{align*}
        \|Q_{\mathfrak{p}_0} - Q\|_{I_{\mathfrak{p}_0}}
        &\leq \|Q_{\mathfrak{p}_0} - Q_{\mathfrak{p}_n}\|_{I_{\mathfrak{p}_0}} 
        + \|Q_{\mathfrak{p}_n} - Q_{\mathfrak{p}'}\|_{I_{\mathfrak{p}_0}} 
        + \|Q_{\mathfrak{p}'} - Q_{\mathfrak{p}''}\|_{I_{\mathfrak{p}_0}} \\
        &\quad+ \|Q_{\mathfrak{p}''} - Q_{\mathfrak{m}}\|_{I_{\mathfrak{p}_0}} 
        + \|Q_{\mathfrak{m}} - Q\|_{I_{\mathfrak{p}_0}} \\
        &\leq 1 + 10^{-4n}(\|Q_{\mathfrak{p}_n} - Q_{\mathfrak{p}'}\|_{I_{\mathfrak{p}_n}} 
        + \|Q_{\mathfrak{p}'} - Q_{\mathfrak{p}''}\|_{I_{\mathfrak{p}'}} 
        + \|Q_{\mathfrak{p}''} - Q_{\mathfrak{m}}\|_{I_{\mathfrak{p}''}}\\
        &\quad+ \|Q_{\mathfrak{m}} - Q\|_{I_{\mathfrak{m}}})\\
        &\leq 1 + 10^{-4n}(a + (a + 1) + 1+ 100) \leq 2\,.
    \end{align*}
    Here we used in the second step that $Q_{\mathfrak{p}_n} \in \mathcal{Q}(\mathfrak{p}_0) \subset B_{I_{\mathfrak{p}_0}}(Q_{\mathfrak{p}_0},1)$ which holds since $\mathfrak{p}_0 < \mathfrak{p}_n$,  we used \cref{ComparingPolBound} $n$ times, and we used that $I_{\mathfrak{p}_n} \subset I_{\mathfrak{p}'} = I_{\mathfrak{p}''} \subset I_{\mathfrak{m}}$. The third step follows from the relations $a \mathfrak{p}_n \leq a \mathfrak{p}'$, $Q_{\mathfrak{p}''} \in B_{I_{\mathfrak{p}'}}(Q_{\mathfrak{p}'}, a + 1)$, $\mathfrak{p}'' \leq \mathfrak{m}$, and the assumption $Q \in \mathcal{Q}(100\mathfrak{m})$. Hence $\mathcal{Q}(100\mathfrak{m}) \subset \mathcal{Q}(2 \mathfrak{p}_0)$. Since $I_{\mathfrak{p}_0} \subset I_{\mathfrak{m}}$, we can conclude that $2\mathfrak{p}_0 < 100 \mathfrak{m}$. This is the desired contradiction to $\mathfrak{p}_0 \in \mathfrak{h}_n \setminus \mathfrak{C}_n$.
    
    It remains to decompose the set $\mathfrak{h}_n \cap \mathfrak{C}_n$ into $O(\gamma n^2 + \gamma n \log\log \lambda)$ antichains and $O(n + \log\log \lambda)$ $L^\infty$-forests of level $n$. Since $\mathfrak{h}_n$ is convex, it suffices to decompose $\mathfrak{C}_n$: Intersecting the resulting trees and antichains with $\mathfrak{h}_n$ yields a decomposition of $\mathfrak{h}_n \cap \mathfrak{C}_n$. Let for $\mathfrak{p} \in \mathfrak{C}_n$
    \[
        \mathfrak{B}(\mathfrak{p}) = \{\mathfrak{m} \in \mathfrak{M}_n \, : \, 100 \mathfrak{p} \leq \mathfrak{m}\}\,.
    \]
    By the definition of $\mathfrak{M}_n$, there exists for each $\mathfrak{p} \in \mathfrak{P}_{good}$ some $\mathfrak{m} \in \mathfrak{M}_n$ with $100\mathfrak{p} \leq \mathfrak{p} \leq \mathfrak{m}$.  Furthermore, by \eqref{MaxTilesOverlapEqn}, the spatial cubes $I_\mathfrak{m}$, $\mathfrak{m} \in \mathfrak{M}_n$ have overlap bounded by $2^n \log(n+1) \log(\lambda)$. Together this implies
    \[
        1 \leq |\mathfrak{B}(\mathfrak{p})| \lesssim 2^n \log(n+1) \log(\lambda)
    \]
    for all $\mathfrak{p} \in \mathfrak{C}_n$. For $0 \leq j \lesssim n + \log\log(\lambda)$ let 
    \[
        \mathfrak{C}_{n, j} = \{\mathfrak{p} \in \mathfrak{C}_n \, : \, 2^j \leq |\mathfrak{B}(\mathfrak{p})| < 2^{j+1}\}\,.
    \]
    To complete the proof, we show that each set $\mathfrak{C}_{n,j}$ can be written as the union of one $L^\infty$-forest and $\lceil\gamma n\rceil+1$ antichains. 
    
    The set $\mathfrak{C}_{n,j}$ is convex: If $\mathfrak{p}_1 < \mathfrak{p}_2$, then $\hat I_{\mathfrak{p}_1} \subset I_{\mathfrak{p}_2}$ and $Q_{\mathfrak{p}_2} \in B_{I_{\mathfrak{p}_1}}(Q_{\mathfrak{p}_1}, 1)$. By \cref{ComparingPolBound}, this implies $B_{I_{\mathfrak{p}_2}}(Q_{\mathfrak{p}_2}, 100) \subset B_{I_{\mathfrak{p}_1}}(Q_{\mathfrak{p}_1}, 100)$ and thus $100 \mathfrak{p}_1 < 100 \mathfrak{p}_2$. Therefore for tiles $\mathfrak{p}_1 < \mathfrak{p} < \mathfrak{p}_2$ it holds that $\mathfrak{B}(\mathfrak{p}_2) \subset \mathfrak{B}(\mathfrak{p}) \subset \mathfrak{B}(\mathfrak{p}_1)$ and hence $\mathfrak{p}_1, \mathfrak{p}_2 \in \mathfrak{C}_{n.j}$ implies $\mathfrak{p} \in \mathfrak{C}_{n,j}$.
    
    We choose the tree tops for the $L^\infty$-forest. Let $\mathfrak{U} \subset \mathfrak{C}_{n,j}$ be the set of all tiles $\mathfrak{u}$ such that 
    \[
    \mathfrak{p} \in \mathfrak{C}_{n,j},I_\mathfrak{u} \subsetneq I_\mathfrak{p}\implies\mathcal{Q}(100 \mathfrak{u}) \cap \mathcal{Q}(100 \mathfrak{p}) = \emptyset\,.
    \] 
    We show that the spatial cubes of these tiles have overlap $\leq C2^n \log(n+1)\log(\lambda)$, i.e. the sets $\mathfrak{U}(x) = \{\mathfrak{u} \in \mathfrak{U} \, : \, x \in I_{\mathfrak{u}}\}$ have cardinality $\leq C2^n\log(n+1)\log(\lambda)$ for all $x$. 
    To show this estimate, we first write $\mathfrak{U}(x)$ as the union of $O(1)$ many collections $\mathfrak{U}'(x)$ such that $\mathcal{Q}(100 \mathfrak{u}) \cap \mathcal{Q}(100 \mathfrak{u}') = \emptyset$ for all $\mathfrak{u} \neq \mathfrak{u}' \in \mathfrak{U}'(x)$. This is possible at each fixed scale $s$: The uncertainty regions $\mathcal{Q}(\mathfrak{u})$, $\mathfrak{u} \in \mathfrak{U}'(x)$ with $s(\mathfrak{u}) = s$ are pairwise disjoint since all such $\mathfrak{u}$ have the same spatial cube. A ball $B_{I_\mathfrak{u}}(Q_\mathfrak{u}, 200)$ intersects at most $(202/0.2)^{\dim \mathcal{Q}}$ of these uncertainty regions $\mathcal{Q}(\mathfrak{u}')$, $\mathfrak{u}'\in \mathfrak{U}(x)$, thus each of the sets $Q(100 \mathfrak{u}) = B_{I_\mathfrak{u}}(Q_\mathfrak{u}, 100)$ intersects at most $2000^{\dim \mathcal{Q}}$ other sets $Q(100\mathfrak{u}')$, $\mathfrak{u}' \in \mathfrak{U}(x)$. Hence the claimed decomposition is possible at scale $s$. But if $\mathfrak{u}, \mathfrak{u}' \in \mathfrak{U}(x)$ are of different scale, then without loss of generality $I_\mathfrak{u} \subsetneq I_{\mathfrak{u}'}$ and thus, by the definition of $\mathfrak{U}$, the sets $Q(100\mathfrak{u})$, $Q(100 \mathfrak{u}')$ are disjoint. It follows that the claimed decomposition of $\mathfrak{U}(x)$ into $O(1)$ sets $\mathfrak{U}'(x)$ exists. 
    Since the sets $\mathcal{Q}(100\mathfrak{u})$, $\mathfrak{u} \in \mathfrak{U}'(x)$ are pairwise disjoint, the same is true for the sets $\mathfrak{B}(\mathfrak{u})$, $\mathfrak{u} \in \mathfrak{U}'(x)$. Each of these sets has cardinality $\geq 2^j$ and their union has cardinality $\lesssim 2^n \log(n+1) \log(\lambda)$ by \eqref{MaxTilesOverlapEqn}. Therefore $|\mathfrak{U}(x)| \lesssim 2^{n-j} \log(n+1) \log(\lambda)$, as required. 
    
    Next, we construct the trees belonging to the tree tops. For $\mathfrak{u} \in \mathfrak{U}$ let 
    \[
        \mathfrak{D}(\mathfrak{u}) = \{\mathfrak{p} \in \mathfrak{C}_{n,j} \, : \, 2\mathfrak{p} < \mathfrak{u}\}\,.
    \]
    Then the set
    \[
        \mathfrak{A}_j = \mathfrak{C}_{n,j} \setminus \bigcup_{u \in \mathfrak{U}} \mathfrak{D}(\mathfrak{u})
    \]
    is an antichain. Indeed, assume that there exists $\mathfrak{p} < \mathfrak{p}_1 \in \mathfrak{A}_j$. Using \cref{ComparingPolBound}, this implies $2\mathfrak{p} < 200 \mathfrak{p}_1 \leq \mathfrak{p}_1$. If $\mathfrak{p}_1 \in \mathfrak{U}$ then $\mathfrak{p} \in \mathfrak{D}(\mathfrak{p}_1)$, which contradicts $\mathfrak{p} \in \mathfrak{A}_j$. 
    Hence $\mathfrak{p}_1 \notin \mathfrak{U}$ and there exists $\mathfrak{p}_2 \in \mathfrak{C}_{n,j}$ with $I_{\mathfrak{p}_1} \subsetneq I_{\mathfrak{p}_2}$ and $\mathcal{Q}(100 \mathfrak{p}_1) \cap \mathcal{Q}(100 \mathfrak{p}_2) \neq \emptyset$. By \cref{ComparingPolBound}, it follows that $200 \mathfrak{p}_1 < 200 \mathfrak{p}_2$. Then $2\mathfrak{p} < \mathfrak{p}_2$. Continuing in this manner we obtain a chain $2\mathfrak{p} < 200\mathfrak{p}_1 < \dotsb < 200\mathfrak{p}_k$ of arbitrary length $k$ with $\mathfrak{p}_1, \dotsc, \mathfrak{p}_k \in \mathfrak{C}_{n,j}$. This is a contradiction as the tiles in $\mathfrak{P}$ have bounded scale.
    
    Let $\mathfrak{U}' = \{\mathfrak{u} \in \mathfrak{U} \, : \, \mathfrak{D}(\mathfrak{u}) \neq \emptyset\}$ and consider the relation 
    \[
        \mathfrak{u} \sim \mathfrak{u}' \iff \exists \mathfrak{p} \in \mathfrak{D}(\mathfrak{u}) \, \text{with} \, 10\mathfrak{p} \leq \mathfrak{u}'
    \]
    on $\mathfrak{U}'$, we will show below that it is an equivalence relation. We first claim that 
    \[
        \mathfrak{u} \sim \mathfrak{u}' \implies I_\mathfrak{u} = I_{\mathfrak{u}'} \, \text{and} \, \mathcal{Q}(100 \mathfrak{u}) \cap \mathcal{Q}(100 \mathfrak{u}') \neq \emptyset\,.
    \]
    Let $\mathfrak{u}, \mathfrak{u}' \in \mathfrak{U}'$ with $\mathfrak{u} \sim \mathfrak{u}'$. By definition of $\sim$ and $\mathfrak{D}(\mathfrak{u})$, there exists $\mathfrak{p}$ with $2\mathfrak{p} < \mathfrak{u}$ and $10\mathfrak{p} \leq \mathfrak{u}'$. Hence the cubes $I_\mathfrak{u}$ and $I_{\mathfrak{u}'}$ both contain $I_\mathfrak{p}$ and are therefore nested. Because of that, it suffices to show
    \begin{equation}
    \label{eq 100 intersection}
        \mathcal{Q}(100\mathfrak{u}) \cap \mathcal{Q}(100\mathfrak{u}') \neq \emptyset
    \end{equation}
    since this already implies $I_\mathfrak{u} = I_{\mathfrak{u}'}$ by the definition of $\mathfrak{U}$ and nestedness.
    Now we distinguish three cases. If $I_\mathfrak{p} = I_{\mathfrak{u}'}$, then  $100\mathfrak{u}' \leq 2\mathfrak{p} < \mathfrak{u}$, which gives \eqref{eq 100 intersection}. If $I_\mathfrak{p} = I_{\mathfrak{u}}$, then $100\mathfrak{u} \leq 10\mathfrak{p} \leq \mathfrak{u'}$, which gives again \eqref{eq 100 intersection}.
    If $I_\mathfrak{p} \subsetneq I_{\mathfrak{u}'}$ and $I_\mathfrak{p} \subsetneq I_{\mathfrak{u}}$, we deduce using \cref{ComparingPolBound} that  $100\mathfrak{p} < 100\mathfrak{u}'$ and $100\mathfrak{p} < 100\mathfrak{u}$. If $\mathcal{Q}(100\mathfrak{u}) \cap \mathcal{Q}(100\mathfrak{u}') = \emptyset$, then the sets $\mathfrak{B}(\mathfrak{u})$ and $\mathfrak{B}(\mathfrak{u}')$ are disjoint. However, $\mathfrak{B}(\mathfrak{p}) \supset \mathfrak{B}(\mathfrak{u}) \cup \mathfrak{B}(\mathfrak{u}')$ which implies $|\mathfrak{B}(\mathfrak{p})| \geq |\mathfrak{B}(\mathfrak{u})| + |\mathfrak{B}(\mathfrak{u}')| \geq 2^{j+1}$, contradicting $\mathfrak{p} \in \mathfrak{C}_{n,j}$. Thus $\mathcal{Q}(100\mathfrak{u}) \cap \mathcal{Q}(100\mathfrak{u}') \neq \emptyset$, as claimed.
    
    Next, we check that $\sim$ is an equivalence relation. Reflexivity is obvious. For symmetry and transitivity assume that $\mathfrak{u}, \mathfrak{u}', \mathfrak{u''} \in \mathfrak{U}'$ with $I_\mathfrak{u} = I_{\mathfrak{u}'} =  I_{\mathfrak{u}''}$ and $\mathcal{Q}(100\mathfrak{u}) \cap\mathcal{Q}(100\mathfrak{u}') \neq \emptyset$ and $\mathcal{Q}(100\mathfrak{u}') \cap\mathcal{Q}(100\mathfrak{u}'') \neq \emptyset$. Since $\mathfrak{D}(\mathfrak{u}) \neq \emptyset$, there exists $\mathfrak{p} \in \mathfrak{D}(\mathfrak{u})$. By definition, $\mathfrak{p}$ satisfies $2\mathfrak{p} < \mathfrak{u}$. Using \cref{ComparingPolBound}, it follows that $4\mathfrak{p} < 1000\mathfrak{u}$ and hence $4\mathfrak{p} < \mathfrak{u}''$.
    We conclude transitivity: If $\mathfrak{u} \sim \mathfrak{u}' \sim \mathfrak{u}''$, then our assumption holds and thus $10 \mathfrak{p} \leq 4\mathfrak{p} < \mathfrak{u}''$, which gives $\mathfrak{u} \sim \mathfrak{u}''$. Similarly symmetry: If $\mathfrak{u}' \sim \mathfrak{u}$, then the assumption holds with $\mathfrak{u}'' = \mathfrak{u}'$. Hence $10 \mathfrak{p} < \mathfrak{u}'$, which implies $\mathfrak{u} \sim \mathfrak{u}'$.
    
    Let $\mathfrak{V}$ be a set of representatives of $\mathfrak{U}'$ for $\sim$. For $\mathfrak{v} \in \mathfrak{V}$ define
    \[
        \mathfrak{T}(\mathfrak{v}) = \bigcup_{\mathfrak{u} \sim \mathfrak{v}} \mathfrak{D}(\mathfrak{u})\,.
    \]
    The sets $\mathfrak{T}(\mathfrak{v})$ are convex: Suppose that $\mathfrak{u}_1 \sim \mathfrak{v}$, $\mathfrak{u}_2 \sim \mathfrak{v}$, that $\mathfrak{p}_1 \in \mathfrak{D}(\mathfrak{u}_1)$, $\mathfrak{p}_2 \in \mathfrak{D}( \mathfrak{u}_2)$ and that $\mathfrak{p}_2 < \mathfrak{p} < \mathfrak{p}_1$. Then $\mathfrak{p} \in \mathfrak{C}_{n,j}$ by convexity of $\mathfrak{C}_{n,j}$, and $2\mathfrak{p} \leq 2\mathfrak{p}_1 \leq \mathfrak{u}_1$. Thus $\mathfrak{p} \in \mathfrak{D}(\mathfrak{u}_1) \subset \mathfrak{T}(\mathfrak{v})$. As we have shown above, for every $\mathfrak{p} \in \mathfrak{D}(\mathfrak{u})$ with $\mathfrak{u} \sim \mathfrak{v}$, it holds that $4\mathfrak{p} < \mathfrak{v}$. Therefore $\mathfrak{T}(\mathfrak{v})$ is a tree with top $\mathfrak{v}$. By the definition of $\sim$, these trees satisfy the separation condition
    \begin{equation}
        \label{SeparationCondition}
        \mathfrak{v} \neq \mathfrak{v}' \implies \forall \mathfrak{p} \in \mathfrak{T}(\mathfrak{v}): \ 10\mathfrak{p} \not \leq \mathfrak{v}'\,.
    \end{equation}
    As we have already shown the overlap estimate \eqref{ForestEqn}, it remains to obtain $2^{\gamma n}$-separation of the trees $\mathfrak{T}(\mathfrak{v})$, $\mathfrak{v} \in \mathfrak{V}$. In order to achieve this, we remove the bottom $\gamma n$ layers of tiles. Let for $k = 1, \dotsc, \lceil\gamma n\rceil$ 
    \[
    \mathfrak{A}'_{j,k} = \bigcup_{\mathfrak{v} \in \mathfrak{V}} \mathfrak{A}'_{j,k} (\mathfrak{v})\,,
    \]
    where $\mathfrak{A}'_{j,k}(\mathfrak{v})$ is the set of minimal tiles in $\mathfrak{T}(\mathfrak{v}) \setminus \cup_{i  < k} \mathfrak{A}_{j,i}'$.
    Tiles in $\mathfrak{A}'_{j,k}(\mathfrak{v})$ are clearly not comparable.
    If $\mathfrak{p} \in \mathfrak{A}'_{j,k}(\mathfrak{v}) , \mathfrak{p}' \in \mathfrak{A}'_{j,k}(\mathfrak{v}')$ are in different trees and $\mathfrak{p} \leq \mathfrak{p}'$, then since $4\mathfrak{p}' < \mathfrak{v}'$ it holds that $I_{\mathfrak{p}} \subset I_{\mathfrak{v}'}$. With the separation condition \eqref{SeparationCondition}, this implies that $\mathcal{Q}(\mathfrak{v}') \not\subset \mathcal{Q}(10\mathfrak{p})$. On the other hand, $4\mathfrak{p}' < \mathfrak{v}'$ implies that for $Q \in \mathcal{Q}(\mathfrak{v}')$ we have
    \begin{align*}
        \|Q - Q_\mathfrak{p}\|_{I_\mathfrak{p}} &\leq \|Q - Q_{\mathfrak{v}'}\|_{I_\mathfrak{p}} + \|Q_{\mathfrak{v}'} - Q_{\mathfrak{p}'}\|_{I_\mathfrak{p}} +
        \|Q_{\mathfrak{p}'} - Q_{\mathfrak{p}}\|_{I_{\mathfrak{p}}}\\
        &\leq \|Q - Q_{\mathfrak{v}'}\|_{I_{\mathfrak{v}'}} + \|Q_{\mathfrak{v}'} - Q_{\mathfrak{p}'}\|_{I_{\mathfrak{p}'}} +  \|Q_{\mathfrak{p}'} - Q_{\mathfrak{p}}\|_{I_{\mathfrak{p}}}\\
        &\leq 1 + 4 + 1 < 10\,,
    \end{align*}
    and thus $\mathcal{Q}(\mathfrak{v}') \subset \mathcal{Q}(10\mathfrak{p})$.
    This is a contradiction, and therefore tiles in distinct trees are not comparable. Thus $\mathfrak{A}'_{j,k}$ is an antichain for every $k$. 
    
    Finally, we show that the pruned trees
    \[
        \mathfrak{T}'(\mathfrak{v}) = \mathfrak{T}(\mathfrak{v}) \setminus \bigcup_k \mathfrak{A}_{j,k}
    \]
    are $2^{\gamma n}$-separated. Let $\mathfrak{p} \in \mathfrak{T}'(\mathfrak{v})$ and assume $I_\mathfrak{p} \subset I_{\mathfrak{v}'}$. Then there are tiles $\mathfrak{p}_1 < \dotsb < \mathfrak{p}_{\lceil\gamma n\rceil} < \mathfrak{p}$ in $\mathfrak{T}(\mathfrak{v})$. By the separation condition \eqref{SeparationCondition}, there exists some $Q \in \mathcal{Q}(\mathfrak{v}') \setminus \mathcal{Q}(10 \mathfrak{p}_1)$. It follows that
    \begin{align*}
        \|Q_{\mathfrak{p}} - Q_{\mathfrak{v}'}\|_{I_{\mathfrak{p}}} &\geq 10^{4\lceil\gamma n\rceil} \|Q_{\mathfrak{p}} - Q_{\mathfrak{v}'}\|_{I_{\mathfrak{p}_1}}\\
        &\geq 10^{4\lceil\gamma n\rceil} (-\|Q_{\mathfrak{p}} - Q_{\mathfrak{p}_1}\|_{I_{\mathfrak{p}_1}} + \|Q_{\mathfrak{p}_1} - Q\|_{I_{\mathfrak{p}_1}} - \|Q - Q_{\mathfrak{v}'}\|_{I_{\mathfrak{v}'}})\\
        &\geq 10^{4\lceil\gamma n\rceil}(-1 + 10 - 1) \geq 8 \cdot 10^{4\gamma n}\,,
    \end{align*}
    thus the trees $\mathfrak{T}'(\mathfrak{v})$ are $2^{\gamma n}$ separated.
    
    To summarize, we decomposed $\mathfrak{h}_n \setminus \mathfrak{C}_n$ as a union of at most $n$ antichains. Then we decomposed $\mathfrak{C}_n = \cup_{j = 0}^{C(n + \log \log \lambda)} \mathfrak{C}_{n,j}$. We showed that each $\mathfrak{C}_{n,j}$ is the union of the antichain $\mathfrak{A}_j$, $\lceil\gamma n \rceil$ antichains $\mathfrak{A}'_{j,k}$ and one $L^\infty$-forest $\cup_{\mathfrak{v} \in \mathfrak{V}} \mathfrak{T}'(\mathfrak{v})$.
\end{proof}

The sets $\mathfrak{h}_n$ form an increasing chain with union $\mathfrak{P}_{good}$, thus
\[
    \mathfrak{P}_{good} = \mathfrak{h}_{n_0} \cup \bigcup_{n_0}^\infty \mathfrak{h}_{n+1} \setminus \mathfrak{h}_n\,.
\]
Since the sets $\mathfrak{h}_{n}$ are down subsets, the differences $\mathfrak{h}_{n+1} \setminus \mathfrak{h}_{n}$ are convex, thus the intersection of any tree in $\mathfrak{h}_{n+1}$ with $\mathfrak{h}_{n+1}\setminus\mathfrak{h}_n$ is still a tree. Hence \cref{DecompositionTreeAntichain} also yields a decomposition of $\mathfrak{h}_{n+1} \setminus \mathfrak{h}_n$ into $O(\gamma n^2 + \gamma n \log\log \lambda)$ antichains and $O(n + \log\log \lambda)$ $L^\infty$-forests of level $n$. In summary, we have the following disjoint union (the notation differs from the proof of \cref{DecompositionTreeAntichain}):
\begin{align}
    \label{decomposition}
    \mathfrak{P}_{good} =  \bigcup_{n \geq n_0} \left( \bigcup_{j = 1}^{C(\gamma n^2 + \gamma n \log \log \lambda)} \mathfrak{A}_{n,j} \cup \bigcup_{j =1}^{C(n + \log\log \lambda)} \mathfrak{F}_{n,j}\right)
\end{align}
where 
\begin{itemize}
    \item each $\mathfrak{A}_{n,j}$ is an antichain,
    \item each $\mathfrak{F}_{n,j} = \cup_l \mathfrak{T}_{n,j,l}$ is an $L^\infty$-forest of level $n$,
    \item $\dens{\mathfrak{A}_{n,j}} \lesssim 2^{-n}$ if $n > n_0$,
    \item $\dens{\mathfrak{F}_{n,j}} \lesssim 2^{-n}$ if $n > n_0$.
\end{itemize}

\section{Error Terms}
In this section, we deal with the parts of the operators $T_\mathfrak{A}$ and $T_{\bd(\mathfrak{T})}$ corresponding to antichains and boundary parts of trees.

\subsection{Antichains}
We first prove estimates for the operators $T_\mathfrak{A}$ associated to antichains $\mathfrak{A}$. We briefly explain the strategy. For an antichain $\mathfrak{A}$, the sets $E(\mathfrak{p}), \mathfrak{p} \in \mathfrak{A}$ are disjoint. Therefore the pointwise estimate $|T_\mathfrak{A}f| \lesssim M^Kf$ holds. With this estimate, one can control one operator $T_\mathfrak{A}$, it is however not good enough to control the countable collection of operators $T_{\mathfrak{A}_{n,j}}$. Since there are $O(n^2 \log\log(\lambda)^2)$ antichains of each density parameter $n$, one can control all antichains by showing exponential decay of the operator norm of $T_{\mathfrak{A}_{n,j}}$ in $n$. In \cite{ZK2021}, this is done as follows: One derives a bound for $T_{\mathfrak{p}_1}T^*_{\mathfrak{p}_2}$ which decays like a power of the Fourier separation $\Delta(\mathfrak{p}_1, Q_{\mathfrak{p}_2})$. If this separation is of size $\gtrsim 2^{\varepsilon n}$, this bound is good enough. 
On the other hand, the measure of the union of all $E(\mathfrak{p}_2)$ with smaller separation is exponentially small since $\dens(\mathfrak{A}_{n,j}) \lesssim 2^{-n}$. 
This strategy carries over to the anisotropic setting without problems, and yields the estimate $\|T_{\mathfrak{A}}\|_{2 \to 2} \lesssim A 2^{-\varepsilon n}$. 
To obtain the improved constants in \cref{MainThmWeakBound1}, we will use this estimate for $n > n_0$ and the maximal function argument described above at density $n_0$, where we choose $n_0 \sim \log(A/(\|M^K\|_{2\to 2} + \|R^K\|_{2\to2})$. The square of the logarithm in \cref{MainThmWeakBound1} arises then because there are $O(n_0^2)$ antichains of density $n_0$.

We now carry out the details of this argument. We start with the maximal function estimate:

\begin{lemma}
    \label{TrivialAntichainBound}
    Let $\mathfrak{A}$ be an antichain. Then it holds that
    \[
        \|T_{\mathfrak{A}}\|_{2 \to 2} \lesssim \|M^K\|_{2\to 2}\,.
    \]
\end{lemma}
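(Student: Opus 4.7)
The plan is to establish the pointwise domination $|T_\mathfrak{A} f(x)| \le M^K f(x)$, from which the $L^2 \to L^2$ bound is immediate.

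First, I would verify that the sets $\{E(\mathfrak{p})\}_{\mathfrak{p} \in \mathfrak{A}}$ are pairwise disjoint. Suppose $x \in E(\mathfrak{p}_1) \cap E(\mathfrak{p}_2)$ with $\mathfrak{p}_1, \mathfrak{p}_2 \in \mathfrak{A}$. Since $x \in I_{\mathfrak{p}_1} \cap I_{\mathfrak{p}_2}$ and $D$-adic cubes are nested, I may assume $I_{\mathfrak{p}_1} \subset I_{\mathfrak{p}_2}$. From $Q_x \in \mathcal{Q}(\mathfrak{p}_1) \cap \mathcal{Q}(\mathfrak{p}_2)$ together with \cref{PairsLemma}\ref{PairsLemma3}, I obtain $\mathcal{Q}(\mathfrak{p}_2) \subset \mathcal{Q}(\mathfrak{p}_1)$. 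If the containment $I_{\mathfrak{p}_1} \subset I_{\mathfrak{p}_2}$ is strict, then $\mathfrak{p}_1 < \mathfrak{p}_2$, contradicting the antichain property. Otherwise $I_{\mathfrak{p}_1} = I_{\mathfrak{p}_2}$, and by \cref{PairsLemma}\ref{PairsLemma2} the uncertainty regions at a fixed cube partition $\mathcal{Q}$, so intersecting regions must coincide and hence $\mathfrak{p}_1 = \mathfrak{p}_2$.

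Second, I use disjointness to see that for each $x$ at most one summand $T_{\mathfrak{p}} f(x)$ is nonzero, since $T_{\mathfrak{p}} f$ is supported in $E(\mathfrak{p})$. For that unique tile $\mathfrak{p}$, bounding the oscillatory factor $e(Q_x(x) - Q_x(y))$ in modulus by $1$ and invoking the definition \eqref{MaximalFctDef} gives
\[
    |T_\mathfrak{p} f(x)| \le \int_{x + \mathbb{T}^\mathbf{d}} |K_{s(\mathfrak{p})}(x-y)|\, |f(y)| \, \mathrm{d}y \le M^K f(x).
\]
Combining the two steps yields $|T_\mathfrak{A} f(x)| \le M^K f(x)$ pointwise, whence $\|T_\mathfrak{A} f\|_2 \le \|M^K\|_{2\to 2} \|f\|_2$.

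There is no real obstacle here; this is the cheap bound that motivates introducing $M^K$ in the first place, and its role in the overall argument (as explained in the text preceding the lemma) is to serve as the density-$n_0$ estimate against which the exponentially decaying bound from the $TT^*$-analysis of antichains will be traded off.
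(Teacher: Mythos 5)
Your proof is correct and takes the same approach as the paper: pointwise domination $|T_\mathfrak{A}f| \le M^K f$ via disjointness of the sets $E(\mathfrak{p})$. The paper asserts the disjointness without proof; you spell it out correctly (including the case $I_{\mathfrak{p}_1} = I_{\mathfrak{p}_2}$, which is why the antichain condition uses $<$ rather than $\leq$).
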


\begin{proof}
    Since $\mathfrak{A}$ is an antichain the sets $E(\mathfrak{p})$, $\mathfrak{p}\in \mathfrak{A}$ are pairwise disjoint. Therefore
    \begin{align*}
        |T_{\mathfrak{A}} f(x)| &= \left|\sum_{\mathfrak{p} \in  \mathfrak{A}} \mathbf{1}_{E(\mathfrak{p})}(x) \int K_{s(\mathfrak{p})}(x - y) e(Q_x(x) - Q_x(y)) f(y) \, \mathrm{d}y\right|\\
        &\leq \sum_{\mathfrak{p} \in \mathfrak{A}} \mathbf{1}_{E(\mathfrak{p})}(x)  \int |K_{s(\mathfrak{p})}(x - y)||f(y)| \, \mathrm{d}y\lesssim M^K f(x)\,. \qedhere
    \end{align*}
\end{proof}

To show estimates with decay in the density parameter $n$, we need the following van der Corput type estimate for oscillatory integrals with polynomial phase:
\begin{lemma}[\cite{ZK2021}, Lem. A1]
    \label{VanDerCorputA1}
    Let $\psi : \R^\mathbf{d} \to \C$ be a measurable function supported in $\delta_r(J)$ for an isotropic cube $J$. Then 
    \[
        \left|\int_{\R^\mathbf{d}} e^{iQ(x)}\psi(x) \, \mathrm{d}x\right| \lesssim \sup_{|y| < \Delta^{-1/d}l(J)} \int_{\R^\mathbf{d}} |\psi(x) - \psi(x - \delta_r(y))| \, \mathrm{d}x\,,
    \]
    where $l(J)$ is the side length of $J$ and $\Delta = \|Q\|_{\delta_r(J)} + 1$. 
\end{lemma}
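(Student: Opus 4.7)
The first step is to reduce to the isotropic case $r = 1$. Substituting $x = \delta_r(u)$, the function $\tilde\psi(u) := \psi(\delta_r u)$ is supported in the isotropic cube $J$, and $\tilde Q(u) := Q(\delta_r u)$ is a polynomial of degree at most $d$ with $\|\tilde Q\|_J = \|Q\|_{\delta_r J}$, so $\Delta$ is preserved. Since $\psi(x - \delta_r y) = \tilde\psi(u - y)$, both sides of the claimed inequality acquire the same Jacobian factor $r^{|\alpha|}$, reducing the problem to the isotropic statement
\[
\left|\int e^{iQ(x)} \psi(x)\, dx\right| \lesssim \sup_{|y| < \Delta^{-1/d} l(J)} \int |\psi(x) - \psi(x - y)|\, dx
\]
for $\psi$ supported in the isotropic cube $J$.

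I would prove the isotropic statement by induction on the degree $d$. For the base case $d = 1$, write $Q(x) = a \cdot x + b$. \Cref{PolynomialBound} gives $|a|\, l(J) \sim \Delta$ (the case $\Delta \lesssim 1$ is handled by choosing a generic $y_0$ of length $\sim l(J)$ and using $|I| \leq \|\psi\|_1 \lesssim \int |\psi(x) - \psi(x - y_0)|\, dx$). Taking $y_0 = \pi a/|a|^2$ yields $e^{iQ(x-y_0)} = -e^{iQ(x)}$, and by translation invariance $\int e^{iQ(x)} \psi(x - y_0)\,dx = e^{ia\cdot y_0} I = -I$, so
\[
2I = \int e^{iQ(x)} \bigl(\psi(x) - \psi(x - y_0)\bigr)\, dx,
\]
giving $|I| \leq \tfrac{1}{2}\int |\psi(x) - \psi(x - y_0)|\, dx$ with $|y_0| = \pi/|a| \lesssim \Delta^{-1} l(J)$. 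For the inductive step $d - 1 \Rightarrow d$, I would apply the $TT^*$ identity
\[
|I|^2 = \int \left(\int e^{i(Q(x) - Q(x - y))} \psi(x)\overline{\psi(x - y)}\, dx\right) dy,
\]
observing that for each fixed $y$ the inner phase $P_y(x) := Q(x) - Q(x - y)$ is a polynomial in $x$ of degree at most $d - 1$. Using \cref{PolynomialBound} to pin down a direction in which $Q$ has a dominant Taylor coefficient, one shows that on a set of $y$'s of measure comparable to $(\Delta^{-1/d} l(J))^\mathbf{d}$ the norm $\|P_y\|_J + 1$ is $\gtrsim \Delta^{(d-1)/d}$. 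Applying the inductive hypothesis to the inner $x$-integral on this good set, using the product-difference estimate
\[
|\psi(x)\overline{\psi(x-y)} - \psi(x-z)\overline{\psi(x-z-y)}| \leq \|\psi\|_\infty\bigl(|\psi(x) - \psi(x-z)| + |\psi(x-y) - \psi(x-z-y)|\bigr),
\]
and bounding the bad-$y$ contribution trivially by $\|\psi\|_\infty \|\psi\|_1$, yields the claim after a square root and homogeneity normalization of $\|\psi\|_\infty$.

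\textbf{Main obstacle.} The delicate step is the inductive one: establishing the lower bound on $\|P_y\|_J$ on a set of $y$'s of the correct measure and converting the modulus of continuity of the product $\psi \overline{\psi(\cdot - y)}$ back to that of $\psi$ alone require careful bookkeeping to absorb the $\|\psi\|_\infty$ factor produced by the $TT^*$ step via the scaling invariance of the desired inequality. A cleaner alternative would be to invoke the classical Stein--Wainger van der Corput estimate for polynomial phases directly and convert its $(\|\psi\|_\infty + \|\psi'\|_1)$-form into the stated modulus-of-continuity form through a density/mollification argument.
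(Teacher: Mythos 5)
The paper's proof of this lemma is a single sentence: precompose with $\delta_r$ to reduce to the isotropic case and cite \cite{ZK2021}, Lemma A1. Your first paragraph carries out exactly this conjugation by $\delta_r$ and correctly observes that $\tilde Q = Q \circ \delta_r$ is still a polynomial of degree $\leq d$ with $\|\tilde Q\|_J = \|Q\|_{\delta_r J}$ and that both sides pick up the same Jacobian factor; this matches the paper's argument. Everything after that is an attempt to re-prove the cited isotropic result, which the paper treats as a black box.

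The extra sketch has two genuine gaps which you partially acknowledge but do not resolve. In the base case $d=1$, with $Q(x) = a\cdot x + b$, the chosen shift $y_0 = \pi a/|a|^2$ has $|y_0| = \pi/|a|$, while $\|Q\|_J = \|a\|_1 l(J) \geq |a|\,l(J)$ forces $\Delta^{-1} l(J) < 1/|a|$; so $|y_0|$ exceeds the allowed radius by a factor $\geq \pi$, and the supremum on the right-hand side is over a \emph{strictly} smaller set. This is repairable (the modulus of continuity $y\mapsto \int|\psi-\psi(\cdot-y)|$ is subadditive, so constants in the shift radius can be absorbed), but as written "$|y_0|\lesssim \Delta^{-1}l(J)$" does not permit plugging $y_0$ into the supremum. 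More seriously, the inductive step does not close in the form sketched. After normalizing $\|\psi\|_\infty = 1$ (as both sides are degree-one homogeneous in $\psi$), the $TT^*$ identity together with the inductive hypothesis on the good $y$'s and the trivial bound on the bad $y$'s yields an estimate of the shape
\[
|I|^2 \lesssim |J|\, S + \big(\Delta^{-1/d}l(J)\big)^{\mathbf d}\,\|\psi\|_1\,,
\]
where $S$ denotes the desired right-hand side. This is not $\lesssim S^2$: for example when $\psi$ is a tall thin bump supported on a subcube of $J$ of side $\ll \Delta^{-1/d}l(J)$, one has $S\sim\|\psi\|_1$ but $|J|\,S \gg S^2$. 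The same mismatch appears in the suggested mollification alternative: writing $\psi = \psi_\epsilon + (\psi-\psi_\epsilon)$ with $\epsilon\sim\Delta^{-1/d}l(J)$ gives $\|\psi-\psi_\epsilon\|_1\lesssim S$ and $\|\psi_\epsilon'\|_1\lesssim S/\epsilon$, but the Stein--Wainger form still carries a $\|\psi_\epsilon\|_\infty$ term, and $\epsilon\|\psi\|_\infty$ need not be $\lesssim S$. So neither route, as sketched, completes the proof of the isotropic statement. For the purposes of this paper the issue is moot (the isotropic case is a cited black box and your reduction to it is correct), but if you intend the sketch to stand on its own, the closing of the induction needs a genuinely different mechanism than the one you outline.
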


\begin{proof}
    The lemma follows from  \cite{ZK2021}, Lemma A1, after precomposing everything with $\delta_r$.
\end{proof}

We have the following estimate for separated tiles:

\begin{lemma}[\cite{ZK2021}, Lem. 4.1]
    \label{BasicTT}
    Let $\mathfrak{p}_1, \mathfrak{p}_2 \in \mathfrak{P}$ with $|I_{\mathfrak{p}_1}| \leq |I_{\mathfrak{p}_2}|$. 
    Then 
    \begin{equation}
    \label{BasicTTequation}
        \left| \int  T_{\mathfrak{p}_1}^* g_1 \overline{T_{\mathfrak{p}_2}^* g_2}\,\mathrm{d}y\right| \lesssim A^2\, \frac{\Delta(\mathfrak{p}_1, Q_{\mathfrak{p}_2})^{-1/(\alpha_\mathbf{d} d)}}{|I_{\mathfrak{p}_2}|} \int_{E(\mathfrak{p}_1)} |g_1| \, \mathrm{d}x_1 \int_{E(\mathfrak{p}_2)} |g_2|\,\mathrm{d}x_2\,.
    \end{equation}
\end{lemma}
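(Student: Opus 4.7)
The plan is to unfold the definitions of $T_{\mathfrak{p}_i}^*$, exchange the order of integration via Fubini, and reduce to the pointwise bound
\[
    |I(x_1, x_2)| \lesssim A^2 \, \frac{\Delta(\mathfrak{p}_1, Q_{\mathfrak{p}_2})^{-1/(\alpha_\mathbf{d} d)}}{|I_{\mathfrak{p}_2}|}
\]
for all $(x_1, x_2) \in E(\mathfrak{p}_1) \times E(\mathfrak{p}_2)$, where
\[
    I(x_1, x_2) = \int e(Q_{x_1}(y) - Q_{x_2}(y)) \overline{K_{s(\mathfrak{p}_1)}(x_1 - y)} K_{s(\mathfrak{p}_2)}(x_2 - y) \, \mathrm{d}y\,.
\]
Inequality \eqref{BasicTTequation} then follows by multiplying with $|g_1(x_1)g_2(x_2)|$ and integrating. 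By the support property \eqref{KernelSupport} and $s(\mathfrak{p}_1) \leq s(\mathfrak{p}_2)$, the amplitude $\psi(y) = \overline{K_{s(\mathfrak{p}_1)}(x_1-y)}K_{s(\mathfrak{p}_2)}(x_2-y)$ is supported in an anisotropic box of scale $r := D^{s(\mathfrak{p}_1)}$ near $x_1$; if this support is empty we are done, so we may assume $I_{\mathfrak{p}_1}$ lies inside a fixed anisotropic dilate of $I_{\mathfrak{p}_2}$.

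Next, I would apply \cref{VanDerCorputA1} with this $r$ and phase $Q = Q_{x_1} - Q_{x_2}$. Using the homogeneity of $\rho$ together with the elementary bound $\rho(z) \leq |z|^{1/\alpha_\mathbf{d}}$ for $|z| \leq 1$, any translation $z$ with $|z| < \Delta^{-1/d}$ satisfies $\rho(\delta_r(z)) \lesssim r\, \Delta^{-1/(\alpha_\mathbf{d} d)}$, where $\Delta = \|Q\|_{\delta_r(J)} + 1$. Splitting $\psi(y) - \psi(y - \delta_r(z))$ into its two telescoping terms and applying the Hölder estimate \eqref{KernelHoelderBound} to the differenced factor and the size bound \eqref{KernelUpperBound} to the other (the contribution from differencing the larger-scale kernel $K_{s(\mathfrak{p}_2)}$ is smaller by a factor $D^{s(\mathfrak{p}_1) - s(\mathfrak{p}_2)} \leq 1$), and then integrating over the support of $\psi$ of anisotropic measure $\sim D^{s(\mathfrak{p}_1)|\alpha|}$, yields
\[
    |I(x_1, x_2)| \lesssim A^2 \, \frac{(\|Q_{x_1} - Q_{x_2}\|_{I_{\mathfrak{p}_1}} + 1)^{-1/(\alpha_\mathbf{d} d)}}{|I_{\mathfrak{p}_2}|}\,.
\]

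The remaining task is to replace the random quantity $\|Q_{x_1} - Q_{x_2}\|_{I_{\mathfrak{p}_1}}$ by the deterministic $\Delta(\mathfrak{p}_1, Q_{\mathfrak{p}_2})$. Since $x_i \in E(\mathfrak{p}_i)$ we have $Q_{x_i} \in B_{I_{\mathfrak{p}_i}}(Q_{\mathfrak{p}_i}, 1)$, and the triangle inequality gives
\[
    \|Q_{\mathfrak{p}_1} - Q_{\mathfrak{p}_2}\|_{I_{\mathfrak{p}_1}} \leq \|Q_{x_1} - Q_{x_2}\|_{I_{\mathfrak{p}_1}} + \|Q_{\mathfrak{p}_1} - Q_{x_1}\|_{I_{\mathfrak{p}_1}} + \|Q_{\mathfrak{p}_2} - Q_{x_2}\|_{I_{\mathfrak{p}_1}}\,,
\]
with the middle term at most $1$, and the last term at most an absolute constant by \cref{PolynomialBound} applied via the inclusion of $I_{\mathfrak{p}_1}$ in a fixed anisotropic dilate of $I_{\mathfrak{p}_2}$. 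Combined with the $+1$ this gives $\|Q_{x_1} - Q_{x_2}\|_{I_{\mathfrak{p}_1}} + 1 \gtrsim \Delta(\mathfrak{p}_1, Q_{\mathfrak{p}_2})$, and hence the claim. The main difficulty is tracking the two scales $D^{s(\mathfrak{p}_1)}$ and $D^{s(\mathfrak{p}_2)}$ simultaneously across the van der Corput step and the isotropic-to-anisotropic conversion, so that the exponent $1/(\alpha_\mathbf{d} d)$ emerges cleanly even though $\rho$ is adapted to the anisotropic scaling while \cref{VanDerCorputA1} is phrased over an isotropic cube $J$.
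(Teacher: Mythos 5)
Your proposal is correct and follows essentially the same route as the paper: Fubini to reduce to a pointwise bound on the oscillatory integral, application of \cref{VanDerCorputA1} with $r = D^{s(\mathfrak{p}_1)}$ over the support $I_{\mathfrak{p}_1}^*$, the anisotropic conversion $\rho(\delta_r z)\lesssim r\Delta^{-1/(\alpha_\mathbf{d}d)}$, and the triangle-inequality comparison between $\|Q_{x_1}-Q_{x_2}\|_{I_{\mathfrak{p}_1}}$ and $\Delta(\mathfrak{p}_1,Q_{\mathfrak{p}_2})$ via the inclusion $I_{\mathfrak{p}_1}\subset 5 I_{\mathfrak{p}_2}$. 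The only difference is cosmetic: you derive the Lipschitz bound on $\psi$ by telescoping and identify the dominant term, whereas the paper states the resulting bound directly.
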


\begin{proof}
    Writing out the left hand side and pulling absolute values inside yields
    \begin{align}
    \label{TTexplanation}
        &\int_{E(\mathfrak{p}_1)} \int_{E(\mathfrak{p}_2)} g_1(x_1) g_2(x_2) \left|\int e((Q_{x_1} - Q_{x_2})(y)) \overline{K_{s(\mathfrak{p}_1)}(x_1 - y)}K_{s(\mathfrak{p}_2)}(x_2 -y) \, \mathrm{d}y \right| \mathrm{d}x_1 \mathrm{d}x_2 \,.
    \end{align}
    We want to estimate the inner integral using \cref{VanDerCorputA1}. By the assumptions \eqref{KernelUpperBound} and \eqref{KernelHoelderBound}, the product $\psi(y) = \overline{K_{s(\mathfrak{p}_1)}(x_1 - y)}K_{s(\mathfrak{p}_2)}(x_2 -y)$ satisfies
    \[
        |\psi(y) - \psi(y')| \lesssim A^2\, \frac{\rho(y - y')}{D^{s(\mathfrak{p}_1)(1 + |\alpha|)+ s(\mathfrak{p}_2)|\alpha|}}\,.
    \]
    Furthermore, since $x_1 \in I_{\mathfrak{p}_1}$ and by the support assumption \eqref{KernelSupport}, the function $\psi$ is supported in $I_{\mathfrak{p}_1}^*$. Applying \cref{VanDerCorputA1} with $J = \delta_{D^{-s(\mathfrak{p}_1)}}(I_{\mathfrak{p}_1}^*)$ and $r = D^{s(\mathfrak{p}_1)}$, we conclude that the inner integral in \eqref{TTexplanation} is bounded by
    \begin{align*}
        \sup_{|y| < 3\Delta^{-1/d}} \int |\psi(x) - \psi(x - \delta_{D^{s(\mathfrak{p}_1)}}(y))| \, \mathrm{d}x&\lesssim D^{|\alpha|s(\mathfrak{p}_1)}\, A^2 \, D^{s(\mathfrak{p}_1)}\,\frac{ \sup_{|y| < 3\Delta^{-1/d}} \rho(y) }{D^{s(\mathfrak{p}_1)(1 + |\alpha|)+ s(\mathfrak{p}_2)|\alpha|}}\\
        &\lesssim A^2\, \frac{\Delta^{-1/(\alpha_{\mathbf{d}} d)}}{|I_{\mathfrak{p}_2}|}\,,
    \end{align*}
    where $\Delta = \|Q_{x_1} - Q_{x_2}\|_{I_{\mathfrak{p}_1}^*} + 1\geq \|Q_{x_1} - Q_{x_2}\|_{I_{\mathfrak{p}_1}} +1$. The left hand side of the inequality \eqref{BasicTTequation} is only nonzero when $I_{\mathfrak{p}_1}^*$ and $I_{\mathfrak{p}_2}^*$ intersect.
    In that case, since $|I_{\mathfrak{p}_1}| \leq |I_{\mathfrak{p}_2}|$, it holds that $I_{\mathfrak{p}_1} \subset 5I_{\mathfrak{p}_2}$. Therefore $\|\cdot\|_{I_{\mathfrak{p}_1}} \leq \|\cdot\|_{5I_{\mathfrak{p}_2}} \leq C \|\cdot\|_{I_{\mathfrak{p}_2}}$, by \cref{PolynomialBound}. Using this, we estimate
    \begin{align*}
        \|Q_{x_1} - Q_{x_2}\|_{I_{\mathfrak{p}_1}} 
        &\geq -\|Q_{x_1} - Q_{\mathfrak{p}_1}\|_{I_{\mathfrak{p}_1}}
        + \|Q_{\mathfrak{p}_1} - Q_{\mathfrak{p}_2}\|_{I_{\mathfrak{p}_1}}
        - \|Q_{\mathfrak{p}_2} - Q_{x_2}\|_{I_{\mathfrak{p}_1}}\\
        &\geq -1 + (\Delta(\mathfrak{p}_1, Q_{\mathfrak{p}_2}) - 1)  - C\,.
    \end{align*}
    Since it trivially holds that $\Delta \geq 1$ this implies that $\Delta \gtrsim \Delta(\mathfrak{p}_1, Q_{\mathfrak{p}_2})$,
    which concludes the proof.
\end{proof}

The following lemma will allow us to exploit the small density of the antichains $\mathfrak{A}_{n,j}$.

\begin{lemma}[\cite{ZK2021}, Lem. 4.3]
    \label{AntichainsSeperation}
    There exists $\varepsilon > 0$ such that if $0 \leq \eta \leq 1$,  $1 \leq p \leq \infty$, $Q \in \mathcal{Q}$ and $\mathfrak{A} \subset \mathfrak{P}$ is an antichain, then it holds that
    \[
        \left\| \sum_{\mathfrak{p} \in \mathfrak{A}} \Delta(\mathfrak{p}, Q)^{-\eta} \mathbf{1}_{E(\mathfrak{p})}\right\|_p \lesssim \dens(\mathfrak{A})^{\varepsilon \eta/p} \left|\bigcup_{\mathfrak{p} \in \mathfrak{A}} I_{\mathfrak{p}}\right|^{1/p}\,.
    \]
\end{lemma}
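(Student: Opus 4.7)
The plan is to reduce the $L^p$ estimate by standard interpolation to a single weighted $L^1$ estimate, and then to prove that estimate by a layer-cake decomposition exploiting the density smallness. The starting point is the observation that for any antichain $\mathfrak{A}$ the sets $\overline{E}(\mathfrak{p})$ (and hence $E(\mathfrak{p})$) are pairwise disjoint: tiles with disjoint spatial cubes give disjointness directly, while for equal or nested spatial cubes the antichain condition combined with \cref{PairsLemma}(2)--(3) forces $\mathcal{Q}(\mathfrak{p}) \cap \mathcal{Q}(\mathfrak{p}') = \emptyset$. Consequently $f := \sum_{\mathfrak{p} \in \mathfrak{A}} \Delta(\mathfrak{p},Q)^{-\eta}\mathbf{1}_{E(\mathfrak{p})}$ satisfies $\|f\|_\infty \leq 1$, so the log-convexity bound $\|f\|_p \leq \|f\|_1^{1/p}\|f\|_\infty^{1-1/p}$ reduces the lemma to the $L^1$ estimate $\|f\|_1 \lesssim \dens(\mathfrak{A})^{\varepsilon \eta}|U|$, with $U := \bigcup_\mathfrak{p} I_\mathfrak{p}$. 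Writing $\Delta^{-\eta}|E(\mathfrak{p})| = (\Delta^{-1}|E(\mathfrak{p})|)^\eta |E(\mathfrak{p})|^{1-\eta}$ and applying H\"older with exponents $1/\eta$ and $1/(1-\eta)$ then further reduces to $\eta = 1$, since
\[
    \sum_\mathfrak{p} \Delta^{-\eta}|E(\mathfrak{p})| \leq \Bigl(\sum_\mathfrak{p}\Delta^{-1}|E(\mathfrak{p})|\Bigr)^\eta \Bigl(\sum_\mathfrak{p}|E(\mathfrak{p})|\Bigr)^{1-\eta} \leq \Bigl(\sum_\mathfrak{p}\Delta^{-1}|E(\mathfrak{p})|\Bigr)^\eta |U|^{1-\eta},
\]
where the second factor is handled by disjointness.

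I would then prove $\sum_\mathfrak{p} \Delta(\mathfrak{p},Q)^{-1}|E(\mathfrak{p})| \lesssim \dens(\mathfrak{A})^{1/\dim \mathcal{Q}}|U|$ via the layer-cake formula
\[
    \sum_\mathfrak{p}\Delta^{-1}|E(\mathfrak{p})| = \int_1^\infty |V_R|\,R^{-2}\,\mathrm{d}R, \qquad V_R := \bigcup_{\mathfrak{p} \in \mathfrak{A},\ \Delta(\mathfrak{p},Q) \leq R} E(\mathfrak{p}),
\]
combined with the trivial $|V_R| \leq |U|$ and the key density-improved level-set bound
\[
    |V_R| \lesssim R^{\dim \mathcal{Q}}\,\dens(\mathfrak{A})\,|U| \qquad (R \geq 2).
\]
Splitting the $R$-integral at the crossover $R_0 = \dens(\mathfrak{A})^{-1/\dim\mathcal{Q}}$ then yields the claimed bound with $\varepsilon = 1/\dim\mathcal{Q}$ (modulo a harmless logarithm if $\dim \mathcal{Q} = 1$).

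The main obstacle is proving the key level-set estimate on $|V_R|$. The available density input is that for each $\mathfrak{p} \in \mathfrak{A}$ with $\Delta(\mathfrak{p},Q) \leq R$, choosing $a = 2R$ and $\mathfrak{p}' = \mathfrak{p}$ in the density definition gives $E(\mathfrak{p}) \subset \overline{E}(2R\mathfrak{p})$ with $|\overline{E}(2R\mathfrak{p})| \leq (2R)^{\dim\mathcal{Q}}\dens(\mathfrak{A})|I_\mathfrak{p}|$, while disjointness of the $\overline{E}(\mathfrak{p})$ gives the global constraint $\sum_\mathfrak{p}|\overline{E}(\mathfrak{p})| \leq |U|$. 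The difficulty is that a naive summation of the per-tile density bound leaves an uncontrolled $\sum |I_\mathfrak{p}|$ whose spatial cubes may overlap across arbitrarily many scales. I would overcome this via a stopping-time argument: pass to the maximal $D$-adic cubes in $\{I_\mathfrak{p} : \mathfrak{p} \in \mathfrak{A}\}$ (which are disjoint subsets of $U$) and inside each such maximal cube $J^*$ establish the local bound $\sum_{\mathfrak{p} \in \mathfrak{A},\, I_\mathfrak{p} \subset J^*,\, \Delta(\mathfrak{p},Q) \leq R} |E(\mathfrak{p})| \lesssim R^{\dim\mathcal{Q}}\dens(\mathfrak{A})|J^*|$. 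The local argument combines three ingredients: (i) there are at most $\lesssim R^{\dim\mathcal{Q}}$ tiles sharing a given spatial cube and satisfying $\Delta \leq R$; (ii) along any chain of nested tiles in an antichain \cref{ComparingPolBound} forces $\Delta(\mathfrak{p},Q)$ to grow by at least a factor $\sim 10^4$ per $D$-adic scale, so only boundedly many nesting levels contribute to $V_R$; and (iii) the pairwise disjointness of the $\overline{E}(\mathfrak{p})$ packs the contributions from different levels efficiently. Making (ii)--(iii) aggregate to the claimed polynomial-in-$R$ bound is the technical core of the argument.
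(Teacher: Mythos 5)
Your reduction to $p=1$ via log-convexity and disjointness matches the paper, and the layer-cake reformulation with the target level-set bound $|V_R|\lesssim R^{\dim\mathcal{Q}}\dens(\mathfrak{A})|U|$ is a correct repackaging of what the paper actually proves: the paper fixes $R=\dens(\mathfrak{A})^{-\varepsilon}$ and shows $|E(L)|\lesssim\dens(\mathfrak{A})^{1-\varepsilon\dim\mathcal{Q}}|L|$ on the stopping cubes $L$, which summed over $L$ is precisely your level-set bound at that $R$, and the same argument works for every $R\geq 1$ if one takes $a=3R$ in the density definition. So your overall architecture is sound, and your reduction to $\eta=1$ is an extra Hölder step the paper avoids by splitting directly at $\Delta\geq\dens(\mathfrak{A})^{-\varepsilon}$, but both routes lead to the same estimate.

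The problem is in your proof of the level-set bound, and it is not merely a loose end: your ingredient (ii) is false. \Cref{ComparingPolBound} controls how the \emph{norms} $\|\cdot\|_{I}$ scale between nested cubes; it does not force $\Delta(\mathfrak{p},Q)$ to grow along tiles of the antichain with nested spatial cubes. In a tree, where $\mathfrak{p}_1<\mathfrak{p}_2$ forces $\|Q_{\mathfrak{p}_1}-Q_{\mathfrak{p}_2}\|_{I_{\mathfrak{p}_1}}\leq 1$, one does get such growth, but in an antichain the tiles are incomparable and $\|Q_{\mathfrak{p}_1}-Q_{\mathfrak{p}_2}\|_{I_{\mathfrak{p}_1}}$ is only bounded \emph{below} (by $0.2$). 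Concretely, at every scale $s$ take the tile $\mathfrak{p}_s'$ with $Q\in\mathcal{Q}(\mathfrak{p}_s')$ and a neighbouring tile $\mathfrak{q}_s$ with $I_{\mathfrak{q}_s}=I_{\mathfrak{p}_s'}$; then $\Delta(\mathfrak{q}_s,Q)\lesssim 1$ for every $s$, while for $s<t$ one has $\mathcal{Q}(\mathfrak{q}_t)\subset\mathcal{Q}(\mathfrak{p}_s')$ and $\mathcal{Q}(\mathfrak{q}_s)\cap\mathcal{Q}(\mathfrak{p}_s')=\emptyset$, so $\{\mathfrak{q}_s\}_s$ is an antichain with nested spatial cubes across arbitrarily many scales, all at $\Delta\lesssim 1$. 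So "boundedly many nesting levels" is not available, and the combination of (i)--(iii) does not close.

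Your choice of stopping cubes also points the wrong way. Passing to the \emph{maximal} $D$-adic cubes $J^*$ in $\{I_\mathfrak{p}\}$ does not help, because the density definition requires a witness $\mathfrak{p}'$ with $a\mathfrak{p}'\leq\mathfrak{p}$ for every relevant $\mathfrak{p}$, hence $I_{\mathfrak{p}'}\subset I_\mathfrak{p}$; the auxiliary tile has to sit at (or below) the \emph{smallest} scale involved, not the largest. The paper therefore uses the opposite stopping family $\mathcal{L}$: maximal $D$-adic $L$ with $L\subsetneq I_\mathfrak{p}$ for some $\mathfrak{p}\in\mathfrak{A}'$ but $I_\mathfrak{p}\not\subset L$ for all $\mathfrak{p}\in\mathfrak{A}'$. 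Because $\hat L$ then sits below all $I_{\mathfrak{p}''}$ meeting $L$, a single auxiliary tile $\mathfrak{p}'$ at scale $\hat L$ with $Q\in\mathcal{Q}(\mathfrak{p}')$ satisfies $a\mathfrak{p}'\leq\mathfrak{p}''$ for $a\sim R$ and all such $\mathfrak{p}''$, giving $E(L)\subset\overline{E}(a\mathfrak{p}')$ in one shot; the density definition then yields $|E(L)|\lesssim\dens(\mathfrak{A})R^{\dim\mathcal{Q}}|L|$ without any per-scale counting. Replacing your (ii)--(iii) by this single-tile domination is exactly the missing step.
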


\begin{proof}
    Note that the statement holds for $p = \infty$ since the sets $E(\mathfrak{p})$ are disjoint and $\Delta(\mathfrak{p}, Q) \geq 1$. Therefore, it suffices to show it for $p =1$, then the case of general $p$ follows from Hölder's inequality. Thus we have to show that
    \begin{align*}
        \sum_{\mathfrak{p} \in \mathfrak{A}} |E(\mathfrak{p})| \Delta(\mathfrak{p},Q)^{-\eta} \lesssim \dens(\mathfrak{A})^{\varepsilon \eta} \left|\bigcup I_{\mathfrak{p}}\right|\,.
    \end{align*}
    Fix $\varepsilon>0$ to be chosen later. Since the sets $E(\mathfrak{p})$ are disjoint and contained in $\bigcup I_\mathfrak{p}$, the estimate holds if one restricts to tiles $\mathfrak{p}$ with $\Delta(\mathfrak{p},Q) \geq \dens(\mathfrak{A})^{-\varepsilon}$. Denote the collection of all other tiles by $\mathfrak{A}' = \{\mathfrak{p} \in \mathfrak{A} \, : \, \Delta(\mathfrak{p},Q) < \dens(\mathfrak{A})^{-\varepsilon}\}$. Let $\mathcal{L}$ be the set of maximal $D$-adic cubes $L$ such that $L \subsetneq I_\mathfrak{p}$ for some $\mathfrak{p} \in \mathfrak{A}'$ but $I_\mathfrak{p} \not \subset L$ for all $\mathfrak{p} \in \mathfrak{A}'$. The cubes $L$ form a disjoint cover of $\cup_{\mathfrak{p} \in \mathfrak{A}'} I_\mathfrak{p}$. Therefore, it suffices to show that
    \[
        |E(L)| \lesssim \dens(\mathfrak{A})^{1 - \varepsilon \dim \mathcal{Q}} |L| 
    \]
    for all $L \in \mathcal{L}$, where $E(L) = \cup_{\mathfrak{p} \in \mathfrak{A}'} E(\mathfrak{p}) \cap L$. Fix $L \in \mathcal{L}$. There exists some tile $\mathfrak{p} \in \mathfrak{A}'$ such that $I_\mathfrak{p} \subset \hat L$. Let $\mathfrak{p}' = \mathfrak{p}$ if $I_\mathfrak{p} = \hat L$ and else let $\mathfrak{p}'$ be the unique tile with $Q \in \mathcal{Q}( \mathfrak{p}')$ and $I_{\mathfrak{p}'} = \hat{L}$. We claim that for $a = 3\dens(\mathfrak{A})^{-\varepsilon}$, the tile $\mathfrak{p}'$ satisfies:
    \begin{itemize}
        \item $a\mathfrak{p} \leq a \mathfrak{p}'$
        \item for every $\mathfrak{p}'' \in \mathfrak{A}'$ with $L\cap I_{\mathfrak{p}''} \neq \emptyset$ we have $a \mathfrak{p}' \leq \mathfrak{p}''$.
    \end{itemize}
    Indeed, the first point clearly holds for $\mathfrak{p}' =  \mathfrak{p}$. In the other case, we use that for every $\tilde Q \in \mathcal{Q}(a\mathfrak{p}')$ we have that
    \begin{align*}
        \|\tilde Q - Q_\mathfrak{p}\|_{I_\mathfrak{p}} &\leq \|\tilde Q - Q_{\mathfrak{p}'}\|_{I_\mathfrak{p}} + \|Q_{\mathfrak{p}'} - Q\|_{I_\mathfrak{p}} + \|Q - Q_\mathfrak{p}\|_{I_\mathfrak{p}} \\
        &\leq 10^{-4} (\|\tilde Q - Q_{\mathfrak{p}'}\|_{I_{\mathfrak{p}'}} + \|Q_{\mathfrak{p}'} - Q\|_{I_{\mathfrak{p}'}}) + \|Q - Q_\mathfrak{p}\|_{I_\mathfrak{p}}\\
        &\leq 10^{-4} (3\dens{\mathfrak{A}}^{-\varepsilon} + 1) + \dens(\mathfrak{A})^{-\varepsilon} \leq 3\dens{\mathfrak{A}}^{-\varepsilon}\,.
    \end{align*}
    For the second point we note that the inclusion $I_{\mathfrak{p}'} = \hat L \subset I_{\mathfrak{p}''}$ holds by definition of $\mathcal{L}$. For the other inclusion let $\tilde Q \in \mathcal{Q}(\mathfrak{p}'')$. Then 
    \begin{align*}
        \|\tilde Q - Q_{\mathfrak{p}'}\|_{I_{\mathfrak{p}'}} &\leq
        \|\tilde Q - Q_{\mathfrak{p}''}\|_{I_{\mathfrak{p}'}}
        +\|Q_{\mathfrak{p}''} - Q\|_{I_{\mathfrak{p}'}}
        +\|Q - Q_{\mathfrak{p}'}\|_{I_{\mathfrak{p}'}}\\
        &\leq \|\tilde Q - Q_{\mathfrak{p}''}\|_{I_{\mathfrak{p}''}}
        +\|Q_{\mathfrak{p}''} - Q\|_{I_{\mathfrak{p}'}}
        +\|Q - Q_{\mathfrak{p}'}\|_{I_{\mathfrak{p}'}}\\
        &\leq 1 + \dens(\mathfrak{A})^{-\varepsilon} + \dens(\mathfrak{A})^{-\varepsilon} \leq 3\dens(\mathfrak{A})^{-\varepsilon}\,.
    \end{align*} 
    The second estimate holds since $\mathfrak{p}'' \in \mathfrak{A}'$, and since $Q \in \mathcal{Q}(\mathfrak{p}')$
    or $\mathfrak{p}' = \mathfrak{p} \in \mathfrak{A}'$, which gives $\|Q - Q_{\mathfrak{p}'}\|_{I_{\mathfrak{p}'}} \leq \max\{1, \dens(\mathfrak{A})^{-\varepsilon}\} = \dens(\mathfrak{A})^{-\varepsilon}$.
    We conclude that the claim holds.
    
    The second point of the claim implies that $E(L) \subset \overline{E}(a \mathfrak{p}')$. Thus, by the definition \eqref{densedefinition} of density,
    \[
        |E(L)| \leq | \overline{E}(a \mathfrak{p}')| \leq \dens(\mathfrak{A}) a^{\dim \mathcal{Q}} |I_{\mathfrak{p}'}| \lesssim \dens(\mathfrak{A})^{1 - \varepsilon\dim\mathcal{Q}} |L|\,.
    \]
    The lemma now follows with $\varepsilon = 1/(1 + \dim \mathcal{Q})$.
\end{proof}

We finally combine the previous two lemmas to show an estimate with exponential decay in $n$.

\begin{lemma}[\cite{ZK2021}, Prop. 4.6]
    \label{AntichainBound}
    Fix $n > n_0$ and fix $j$. Let $\mathfrak{A} = \mathfrak{A}_{n,j}$. Then
    \[
        \|T_{\mathfrak{A}}\|_{2 \to 2} \lesssim A 2^{-n\varepsilon} \,.
    \]
\end{lemma}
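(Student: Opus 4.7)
The strategy is the $TT^*$ method. Expanding
\[
\|T_{\mathfrak{A}}^* g\|_2^2 = \sum_{\mathfrak{p}_1, \mathfrak{p}_2 \in \mathfrak{A}} \langle T_{\mathfrak{p}_1}^* g, T_{\mathfrak{p}_2}^* g\rangle
\]
and using the Hermitian symmetry of the summand, one restricts to pairs with $|I_{\mathfrak{p}_1}| \leq |I_{\mathfrak{p}_2}|$ at the cost of a factor of two. \cref{BasicTT} with $\eta := 1/(\alpha_{\mathbf{d}} d)$ then bounds each surviving summand by
\[
A^2 \frac{\Delta(\mathfrak{p}_1, Q_{\mathfrak{p}_2})^{-\eta}}{|I_{\mathfrak{p}_2}|} \int_{E(\mathfrak{p}_1)}|g|\int_{E(\mathfrak{p}_2)}|g|,
\]
which vanishes unless $I_{\mathfrak{p}_1} \subset 5 I_{\mathfrak{p}_2}$.

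For fixed $\mathfrak{p}_2$ the inner sum over $\mathfrak{p}_1$ equals $\int |g| F_{\mathfrak{p}_2}$, where $F_{\mathfrak{p}_2} := \sum_{\mathfrak{p}_1: I_{\mathfrak{p}_1} \subset 5 I_{\mathfrak{p}_2}} \Delta(\mathfrak{p}_1, Q_{\mathfrak{p}_2})^{-\eta} \mathbf{1}_{E(\mathfrak{p}_1)}$, thanks to the pairwise disjointness of the sets $E(\mathfrak{p}_1)$ within the antichain. I would then combine Cauchy--Schwarz with \cref{AntichainsSeperation} applied to the sub-antichain $\{\mathfrak{p}_1 \in \mathfrak{A} : I_{\mathfrak{p}_1} \subset 5 I_{\mathfrak{p}_2}\}$ (whose density is still $\lesssim 2^{-n}$) with $p = 2$ and exponent $\eta$, obtaining $\|F_{\mathfrak{p}_2}\|_2 \lesssim 2^{-\varepsilon \eta n/2} |I_{\mathfrak{p}_2}|^{1/2}$; the inner sum is therefore at most $2^{-\varepsilon \eta n /2} |I_{\mathfrak{p}_2}|^{1/2} \|g \mathbf{1}_{5 I_{\mathfrak{p}_2}}\|_2$.

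Combining this with $\int_{E(\mathfrak{p}_2)}|g| \leq |E(\mathfrak{p}_2)|^{1/2}\|g\mathbf{1}_{E(\mathfrak{p}_2)}\|_2$ and the density estimate $|E(\mathfrak{p}_2)| \lesssim 2^{-n} |I_{\mathfrak{p}_2}|$ extracts an extra factor $2^{-n/2}$ and reduces the problem to
\[
\|T_\mathfrak{A}^* g\|_2^2 \lesssim A^2 2^{-(\varepsilon\eta/2+1/2)n} \sum_{\mathfrak{p}_2 \in \mathfrak{A}} \|g \mathbf{1}_{5 I_{\mathfrak{p}_2}}\|_2 \|g\mathbf{1}_{E(\mathfrak{p}_2)}\|_2.
\]
A final Cauchy--Schwarz in $\mathfrak{p}_2$ disposes of $\sum \|g \mathbf{1}_{E(\mathfrak{p}_2)}\|_2^2 \leq \|g\|_2^2$ (by disjointness), leaving $\sum \|g\mathbf{1}_{5 I_{\mathfrak{p}_2}}\|_2^2$ as the main obstacle: the dilates $5 I_{\mathfrak{p}_2}$ within an antichain have no a priori bounded overlap across scales. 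I would resolve this by a pointwise maximal-function domination of $|I_{\mathfrak{p}_2}|^{-1/2} \|g\mathbf{1}_{5 I_{\mathfrak{p}_2}}\|_2$ on $E(\mathfrak{p}_2)$, combined with the disjointness of the $E(\mathfrak{p}_2)$ to turn the outer sum into a single integral of $|g|$ against a maximal function, and absorbing any loss from a weak-type $L^2$ maximal inequality into the spare factor from density. This produces $\|T_\mathfrak{A}\|_{2\to 2}^2 \lesssim A^2 2^{-\varepsilon' n}$ for some $\varepsilon' > 0$ depending only on $\alpha, \mathbf{d}, d$, which upon relabeling $\varepsilon$ yields the lemma.
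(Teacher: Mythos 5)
Your strategy coincides with the paper's up to the final step: both expand $\|T_{\mathfrak A}^*g\|_2^2$ via $TT^*$, apply \cref{BasicTT}, and invoke \cref{AntichainsSeperation}. The difference is in the exponents, and this is where your argument breaks.

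You take $p=2$ in \cref{AntichainsSeperation} and then pair $\|F_{\mathfrak p_2}\|_2$ with $\|g\mathbf 1_{5I_{\mathfrak p_2}}\|_2$ by Cauchy--Schwarz. After the reductions this forces you to control $\sum_{\mathfrak p_2}\|g\mathbf 1_{5I_{\mathfrak p_2}}\|_2^2$, equivalently (after pointwise domination on $E(\mathfrak p_2)$) an integral of $|g|$ against $M^2 g=(M|g|^2)^{1/2}$. But $M^2$ is exactly the endpoint: it is weak $(2,2)$ but not strong $(2,2)$, and the quantity $\int |g|\,M^2 g$ is \emph{not} bounded by $C\|g\|_2^2$ uniformly in $g$. (For instance, a lacunary sum of characteristic functions $g=\sum_{k\le N}k^{-1/2}2^{k/2}\mathbf 1_{[2^{-k},2^{-k+1}]}$ has $\|g\|_2^2\sim\log N$ while $\int |g|\,M^2 g\gtrsim N^{1/4}(\log N)^{1/2}$; in tile language, $\sum_{\mathfrak p_2}\|g\mathbf 1_{5I_{\mathfrak p_2}}\|_2^2$ can overshoot $\|g\|_2^2$ by an unbounded factor because the dilates $5I_{\mathfrak p_2}$ across scales have unbounded overlap.) The ``spare'' factor $2^{-n/2}$ you gain from the density bound $|E(\mathfrak p_2)|\lesssim 2^{-n}|I_{\mathfrak p_2}|$ is a constant (depending on $n$ but not on $g$), so it cannot repair a divergent ratio; there is no weak-type interpolation you can absorb it into that recovers a strong $L^2$ bound for $M^2$.

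The paper avoids the endpoint by replacing your Cauchy--Schwarz with H\"older in a non-endpoint pair $(q,q')$, $1<q<2$: one writes
\[
\int |g|\,F_{\mathfrak p_2}\le\Bigl(\int_{5I_{\mathfrak p_2}}|g|^q\Bigr)^{1/q}\bigl\|F_{\mathfrak p_2}\bigr\|_{q'},
\]
applies \cref{AntichainsSeperation} with $p=q'$ to the $L^{q'}$ norm, and is left with the $q$-maximal function $M^q g$ on $E(\mathfrak p_2)$. Since $q<2$, $M^q$ \emph{is} bounded on $L^2$, and the disjointness of the $E(\mathfrak p_2)$ closes the argument via $\int |g|\,M^q g\lesssim\|g\|_2^2$. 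That single exponent change is the missing ingredient; your computation of the density gain and of $\|F_{\mathfrak p_2}\|$ via \cref{AntichainsSeperation} is otherwise sound but unnecessary, as the paper does not need the extra $2^{-n/2}$.
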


\begin{proof}
    Denote for each tile $\mathfrak{p} \in \mathfrak{A}$ 
    \[
        \mathfrak{D}(\mathfrak{p}) = \{\mathfrak{p}' \in \mathfrak{A} \, : \, I_{\mathfrak{p}}^* \cap I_{\mathfrak{p}'}^* \neq \emptyset\,, \, s(\mathfrak{p}') \leq s(\mathfrak{p})\}\,.
    \]
    The operator $T_{\mathfrak{p}}T_{\mathfrak{p}'}^*$ is zero unless $\mathfrak{p} \in \mathfrak{D}(\mathfrak{p}')$ or $\mathfrak{p}' \in \mathfrak{D}(\mathfrak{p})$. Furthermore, $\mathfrak{p}' \in \mathfrak{D}(\mathfrak{p})$ implies $I_{\mathfrak{p}'} \subset 5I_{\mathfrak{p}}$. Thus, by \cref{BasicTT} and Hölder's inequality, it holds with $1 < q < 2$:
    \begin{align*}
        \|T_{\mathfrak{A}}^*g\|_2^2 
        &\leq 2 \sum_{\mathfrak{p} \in \mathfrak{A}} \sum_{\mathfrak{p}' \in \mathfrak{D}(\mathfrak{p})} \left|\int T_{\mathfrak{p}}^* g \overline{T_{\mathfrak{p}'}^* g } \right|\\
        &\lesssim A^2 \,\sum_{\mathfrak{p} \in \mathfrak{A}} \int_{E(\mathfrak{p})} |g|  \frac{1}{|I_{\mathfrak{p}}|}\sum_{\mathfrak{p}' \in \mathfrak{D}(\mathfrak{p})} \Delta(\mathfrak{p}', Q_{\mathfrak{p}})^{-1/(\alpha_nd)} \int_{E(\mathfrak{p}')} |g| \\
        &\leq A^2 \,\sum_{\mathfrak{p} \in \mathfrak{A}} \int_{E(\mathfrak{p})} |g| \left(\frac{1}{|I_\mathfrak{p}|}\int_{5I_\mathfrak{p}} |g|^q\right)^{1/q} \frac{\left\|\sum_{\mathfrak{p}' \in \mathfrak{D}(\mathfrak{p})} \mathbf{1}_{E(\mathfrak{p}')} \Delta(\mathfrak{p}', Q_\mathfrak{p})^{-1/(\alpha_n d)}\right\|_{q'}}{|I_\mathfrak{p}|^{1/q'}}\,.
    \end{align*}
    By \cref{AntichainsSeperation}, the last fraction is $\lesssim 2^{-\varepsilon n}$. Hence we can finish the chain of estimates with
    \begin{align*}
        \lesssim A^2\, 2^{-\varepsilon n}\int |g| M^q |g| 
        \lesssim A^2 \,2^{-\varepsilon n} \|g\|_2^2\,,
    \end{align*}
    using $L^2$ boundedness of the $q$-maximal function for $q < 2$. 
\end{proof}

\subsection{Boundary Parts of Trees}
\label{BoundaryTreesSection}
Let $\mathfrak{T}$ be a tree. Define its boundary part 
\[
    \bd(\mathfrak{T}) = \{\mathfrak{p} \in \mathfrak{T} \, : \, I_{\mathfrak{p}}^* \not\subset I_\mathfrak{T}\}\,.
\]
We deal with the boundary parts of trees separately because we will later need that $T_\mathfrak{T}^* f$ is supported in $I_\mathfrak{T}$, which only holds after removing the boundary part from $\mathfrak{T}$. 

Our treatment of the boundary parts follows the argument in \cite{Fefferman1973} and \cite{Lie2009}, and differs from the arguments in \cite{Lie2020}, \cite{ZK2021}.
The strategy is as follows: The boundary tiles in a tree that are $k$ scales below the top form an antichain, hence we can estimate them using the results of the previous section. Furthermore, the total spatial support of these tiles decays exponentially in $k$, so we need to estimate only $\sim n \log(\lambda)$ layers and can pack the rest into an exceptional set. 

We have the following estimates for the operator associated to the set of all boundary tiles in a forests $\mathfrak{F}_{n,j}$:
\begin{lemma}
    \label{BoundaryTrees}
    Let $n \geq 3$ and $j$ be given. Consider the collection of tiles $\mathfrak{S} = \bigcup_l \bd(\mathfrak{T}_{n,j,l})$. There exists an exceptional set $E_2 = E_2(n,j)$ with $|E_2| \lesssim 2^{-n}\lambda^{-2}$ such that 
    \[
        \|\mathbf{1}_{\mathbb{T}^\mathbf{d} \setminus E_2} T_{\mathfrak{S}}\|_{2 \to 2} \lesssim 
        \begin{cases}
            n \log(\lambda)  A2^{-n\varepsilon} & \text{if $n > n_0$}\\
            n \log(\lambda)  \|M^K\|_{2 \to 2} & \text{if $n = n_0$}
        \end{cases}\,.
    \]
\end{lemma}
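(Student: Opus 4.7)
The plan is to partition the boundary tiles by their distance from the top of their tree, apply the antichain estimates of Section 3.1 to each layer, and absorb the deep layers into the exceptional set. For $k \geq 1$ I would set
\[
    \bd_k(\mathfrak{T}_{n,j,l}) = \{\mathfrak{p} \in \bd(\mathfrak{T}_{n,j,l}) : s(\mathfrak{p}) = s(\mathfrak{T}_{n,j,l}) - k\}\,,
\]
and form $\mathfrak{S}_k = \bigcup_l \bd_k(\mathfrak{T}_{n,j,l})$, so that $\mathfrak{S} = \bigcup_{k \geq 1} \mathfrak{S}_k$. The first claim to check is that each $\mathfrak{S}_k$ is an antichain with $\dens(\mathfrak{S}_k) \lesssim 2^{-n}$. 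Within a tree, tiles at a fixed scale are automatically incomparable; for tiles $\mathfrak{p} < \mathfrak{p}'$ from distinct trees $\mathfrak{T}_l, \mathfrak{T}_{l'}$ of the forest, the inclusions $I_\mathfrak{p} \subsetneq I_{\mathfrak{p}'} \subset I_{\mathfrak{T}_{l'}}$ together with $Q_{\mathfrak{p}'} \in \mathcal{Q}(\mathfrak{p})$, $4\mathfrak{p}' < \tp\mathfrak{T}_{l'}$, and monotonicity of $\|\cdot\|_I$ in $I$ would yield $\|Q_\mathfrak{p} - Q_{\mathfrak{T}_{l'}}\|_{I_\mathfrak{p}} \leq 5$, contradicting the $2^{\gamma n}$-separation of the trees in $\mathfrak{F}_{n,j}$ (using $n \geq 3$ and $\gamma \geq 1$). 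The density bound is inherited from $\mathfrak{F}_{n,j}$ through \eqref{decomposition}.

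Next I would prove the geometric decay of the layers. Any $\mathfrak{p} \in \bd_k(\mathfrak{T}_{n,j,l})$ satisfies $I_\mathfrak{p} \subset I_{\mathfrak{T}_{n,j,l}}$ but $I_\mathfrak{p}^* \not\subset I_{\mathfrak{T}_{n,j,l}}$, so its spatial cube is adjacent to $\partial I_{\mathfrak{T}_{n,j,l}}$. A direct count of $D$-adic cubes of scale $s(\mathfrak{T}_{n,j,l}) - k$ meeting each of the $2\mathbf{d}$ faces, using that the shortest anisotropic side carries the exponent $\alpha_1$, gives
\[
    \bigg|\bigcup_{\mathfrak{p} \in \bd_k(\mathfrak{T}_{n,j,l})} I_\mathfrak{p}^*\bigg| \lesssim D^{-k \alpha_1}\, |I_{\mathfrak{T}_{n,j,l}}|\,.
\]
Summing in $l$ and applying \eqref{ForestEqn} bounds the $k$th layer by $D^{-k\alpha_1}\, 2^n \log(n+1)\log(\lambda)$. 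I would then pick $K$ to be the smallest integer satisfying $D^{K\alpha_1} \geq 2^{2n+1}\lambda^2 \log(n+1)\log(\lambda)$, so that $K \lesssim n + \log(\lambda)$, and take $E_2$ to be the union of all $I_\mathfrak{p}^*$ for $\mathfrak{p} \in \mathfrak{S}_k$, $k > K$. Summing a geometric series gives $|E_2| \lesssim 2^{-n}\lambda^{-2}$.

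Since the range of each $T_\mathfrak{p}$ is contained in $E(\mathfrak{p}) \subset I_\mathfrak{p} \subset I_\mathfrak{p}^*$ and hence in $E_2$ for every $\mathfrak{p}$ in a deep layer, we have $\mathbf{1}_{\mathbb{T}^\mathbf{d}\setminus E_2} T_{\mathfrak{S}_k} = 0$ for $k > K$, so that
\[
    \|\mathbf{1}_{\mathbb{T}^\mathbf{d}\setminus E_2} T_\mathfrak{S}\|_{2\to 2} \leq \sum_{k=1}^K \|T_{\mathfrak{S}_k}\|_{2\to 2}\,.
\]
\Cref{AntichainBound} bounds each summand by $A 2^{-n\varepsilon}$ when $n > n_0$, and \cref{TrivialAntichainBound} bounds it by $\|M^K\|_{2\to 2}$ when $n = n_0$. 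With $n \geq 3$ and $\lambda > 10e$ one has $K \lesssim n + \log(\lambda) \lesssim n \log(\lambda)$, which produces the stated estimate. The only delicate step is the antichain verification for $\mathfrak{S}_k$, which relies essentially on the $2^{\gamma n}$-separation built into the definition of an $L^\infty$-forest; the remaining ingredients are a routine geometric layer count and the antichain lemmas already proved.
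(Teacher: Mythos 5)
Your proof is correct and follows essentially the same strategy as the paper: decompose the boundary tiles into scale layers $\mathfrak{S}_k$, verify that each layer is an antichain using the $2^{\gamma n}$-separation of the trees in the forest, apply the antichain estimates of Section 3.1 to the shallow layers, and place the geometrically small union of spatial cubes of the deep layers into the exceptional set $E_2$. The only cosmetic differences are that you choose a slightly sharper cut-off depth $K \lesssim n + \log\lambda$ (the paper takes $c = 10n\log\lambda$, both of which are $\lesssim n\log\lambda$) and that you define $E_2$ with $I_\mathfrak{p}^*$ rather than $I_\mathfrak{p}$, which is a harmless enlargement since $T_\mathfrak{p}f$ is already supported in $E(\mathfrak{p}) \subset I_\mathfrak{p}$.
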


\begin{proof}
    For $k\geq 1$ and a tree $\mathfrak{T}$, let $\bd_k(\mathfrak{T})$ be the set of tiles $\mathfrak{p} \in \bd(\mathfrak{T})$ with $s(\mathfrak{p}) = s(\tp \mathfrak{T}) - k$. The tiles in $\bd_k(\mathfrak{T}_{n,j,l})$ form an antichain as they all have the same scale. 
    Tiles $\mathfrak{p} \in \mathfrak{T},\mathfrak{p}' \in \mathfrak{T}'$ in separated trees are also not comparable.
    Indeed, else we had without loss of generality that $I_\mathfrak{p} \subset I_{\mathfrak{p}'}$.
    Then, by separation and the fact that $\gamma n \geq n \geq 3$, we have $\|Q_{\mathfrak{p}} - Q_{\mathfrak{T}'}\|_{I_\mathfrak{p}} \geq 2^{\gamma n} - 1 > 5$. 
    Since $Q_{\mathfrak{T}'} \in \mathcal{Q}(4\mathfrak{p}')$, it follows that $\mathcal{Q}(\mathfrak{p}') \not\subset \mathcal{Q}(\mathfrak{p})$.
    We conclude that the set $\mathfrak{S}_k = \cup_l \bd_k(\mathfrak{T}_{n,j,l})$ is an antichain. 
    
    Fix $c = 10n \log(\lambda)$. By \cref{TrivialAntichainBound} and \cref{AntichainBound}, it holds that 
    \[
        \|\sum_{k \leq c} T_{\mathfrak{S}_k}\|_{2 \to 2} \lesssim 
        \begin{cases}
            n \log(\lambda)  A2^{-n\varepsilon} & \text{if $n > n_0$}\\
            n \log(\lambda)  \|M^K\|_{2 \to 2} & \text{if $n = n_0$}
        \end{cases}\,.
    \]
    We show that for each $f \in L^2(\mathbb{T}^\mathbf{d})$ the remainder $T_{\mathfrak{S} \setminus \cup_{k \leq c} \mathfrak{S}_k} f$ has support in a small set $E_2$. Define 
    \[
        E_2 = \bigcup_{\mathfrak{p} \in \cup_{k > c} \mathfrak{S}_k} I_\mathfrak{p} 
        = \bigcup_{l} \bigcup_{\mathfrak{p} \in \bd(\mathfrak{T}_{n,j,l})\,:\, s(\mathfrak{p}) < s(\tp \mathfrak{T}_{n,j,l})- c} I_\mathfrak{p} \,,
    \]
    so that $T_{\cup_{k > c} \mathfrak{S}_k}f$ is supported in $E_2$ for all $f$.
    For fixed $l$, the measure of the inner union is $\lesssim D^{-c\alpha_1} |I_{ \mathfrak{T}_{n,j,l}}|$.
    Thus 
    \[
        |E_2| \lesssim \sum_{l} D^{-c\alpha_1} |I_{\mathfrak{T}_{n,j,l}}| \lesssim 2^n \log(n+1) \log(\lambda) D^{-10\alpha_1 n \log(\lambda)} \leq 2^{-n}\lambda^{-2}\,,
    \]
    where we used that $\sum_l |I_{\mathfrak{T}_{n,j,l}}| \lesssim 2^n \log(n+1) \log(\lambda)$ by \eqref{ForestEqn} and that $D\geq 2$. This concludes the proof.
\end{proof}

\section{Trees and Forests}
In this section, we prove estimates for operators $T_\mathfrak{T}$ and $T_\mathfrak{F}$ corresponding to trees $\mathfrak{T}$ or forests $\mathfrak{F}$. 
The basic idea is that, given a tree $\mathfrak{T}$ and $x \in E(\mathfrak{p})$ for some tile $\mathfrak{p} \in \mathfrak{T}$, the phase $Q_x$ is \enquote{close} to $Q_\mathfrak{T}$. 
If one replaces $Q_x$ by $Q_\mathfrak{T}$, the resulting operator is bounded by $R^K(M_{-Q_\mathfrak{T}}f)$. On the other hand, the definition of a tree is chosen exactly so that the error in this replacement is bounded by the maximal average $M^K$. Thus the operator associated to a single tree is bounded on $L^2$, with norm bounded by $\|M^K\|_{2 \to 2} + \|R^K\|_{2 \to 2}$.  Using boundedness of the nontangential version of the maximally truncated singular integral, we then show estimates for single trees with exponential decay in the density parameter $n$ when $n > n_0$, however the constant in these estimates is of order $A$.

Next, we bound the operators associated to forests. Using an orthogonality argument, we can control collections of trees with spatially disjoint tops, so called rows. After that we show an estimate for $T_{\mathfrak{T}_1}T_{\mathfrak{T}_2}^*$ for separated trees $\mathfrak{T}_1$, $\mathfrak{T}_2$ with power decay in the separation, and use it to show a similar bound for $T_{\mathfrak{R}_1}T_{\mathfrak{R}_2}^*$ for separated rows.
With these two ingredients, we can then control a whole $L^\infty$-forest by splitting it into rows and using that these rows are $2^{\gamma n}$-separated.
In this part of the proof the argument for separated rows, \cref{SepRowBound}, has constants proportional to $A$. This is problematic in the case $n = n_0$, since there we want to obtain an estimate by $\|M^K\|_{2 \to 2} + \|R^K\|_{2 \to 2}$. However, $n_0$ will be chosen sufficiently large so that the $2^{\gamma n_0}$ separation of the trees cancels the constant $A$.

\subsection{Basic Estimates for Trees}

As described above, the operators $T_\mathfrak{T}$ are bounded on $L^2(\mathbb{T}^\mathbf{d})$ with norms depending only on the norms of $M^K$ and $R^K$:
\begin{lemma}
    \label{TrivialTreeBound}
    Let $\mathfrak{T} \subset \mathfrak{P}$ be a tree and assume that $f,g \in L^2(\mathbb{T}^\mathbf{d})$. Then it holds that
    \[
        \left| \int_{\mathbb{T}^\mathbf{d}} g T_\mathfrak{T} f \right| \lesssim  (\|M^K\|_{2 \to 2} + \|R^K\|_{2\to2})\| f\|_2 \| g\|_{2}\,.
    \]
\end{lemma}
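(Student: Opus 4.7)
The plan is to split off the tree phase. For $\mathfrak{p} \in \mathfrak{T}$ and $x \in E(\mathfrak{p})$, write
\[
e(Q_x(x) - Q_x(y)) = e(Q_\mathfrak{T}(x) - Q_\mathfrak{T}(y)) + \bigl[e(Q_x(x) - Q_x(y)) - e(Q_\mathfrak{T}(x) - Q_\mathfrak{T}(y))\bigr],
\]
yielding $T_\mathfrak{T} = T_\mathfrak{T}^{\mathrm{main}} + T_\mathfrak{T}^{\mathrm{err}}$. The goal is to bound the main part by $R^K$ applied to a modulate of $f$, and the error part pointwise by $M^K f$.

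For the main part I first verify that, for each fixed $x$, the set of scales $S(x) = \{s(\mathfrak{p}) : \mathfrak{p} \in \mathfrak{T},\ x \in E(\mathfrak{p})\}$ is an interval. At each scale $s$, \cref{PairsLemma}(2) gives at most one pair $\mathfrak{p}(x,s)$ with $x \in I_\mathfrak{p}$ and $Q_x \in \mathcal{Q}(\mathfrak{p})$. If $s_1 < s_2$ both lie in $S(x)$, then $I_{\mathfrak{p}(x,s_1)} \subset I_{\mathfrak{p}(x,s_2)}$ together with $Q_x \in \mathcal{Q}(\mathfrak{p}(x,s_1)) \cap \mathcal{Q}(\mathfrak{p}(x,s_2))$ and \cref{PairsLemma}(3) force $\mathfrak{p}(x,s_1) \leq \mathfrak{p}(x,s) \leq \mathfrak{p}(x,s_2)$ for every $s \in [s_1, s_2]$, so convexity of $\mathfrak{T}$ places each $\mathfrak{p}(x,s)$ in $\mathfrak{T}$; intersecting with $[\underline{\sigma}(x), \overline{\sigma}(x)]$ preserves the interval property. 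Consequently,
\[
T_\mathfrak{T}^{\mathrm{main}} f(x) = e(Q_\mathfrak{T}(x)) \sum_{s \in S(x)} \bigl(K_s \ast (e(-Q_\mathfrak{T})f)\bigr)(x),
\]
which is a single truncated singular integral applied to the modulation $M_{-Q_\mathfrak{T}} f$, pointwise dominated by $R^K(M_{-Q_\mathfrak{T}} f)(x)$. Since modulation preserves the $L^2$ norm, this gives $\|T_\mathfrak{T}^{\mathrm{main}} f\|_2 \leq \|R^K\|_{2 \to 2}\|f\|_2$.

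For the error part, the tree condition $4\mathfrak{p} < \tp \mathfrak{T}$ gives $Q_\mathfrak{T} \in B_{I_\mathfrak{p}}(Q_\mathfrak{p}, 4)$, and combined with $Q_x \in \mathcal{Q}(\mathfrak{p}) \subset B_{I_\mathfrak{p}}(Q_\mathfrak{p}, 1)$ this yields $\|Q_x - Q_\mathfrak{T}\|_{I_{\mathfrak{p}_{\mathrm{top}}(x)}} \lesssim 1$, where $\mathfrak{p}_{\mathrm{top}}(x)$ denotes the largest-scale tile in $\mathfrak{T}$ with $x \in E(\mathfrak{p}_{\mathrm{top}})$. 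The anisotropic norm comparison in \cref{PolynomialBound}, applied to the nested anisotropic balls surrounding $I_\mathfrak{p}^* \subset C I_{\mathfrak{p}_{\mathrm{top}}(x)}$, then yields the geometric decay
\[
\|Q_x - Q_\mathfrak{T}\|_{I_\mathfrak{p}^*} \lesssim D^{\alpha_1 (s(\mathfrak{p}) - s(\mathfrak{p}_{\mathrm{top}}(x)))}.
\]
Using $|e(a) - e(b)| \leq |a - b|$ and the support of $K_{s(\mathfrak{p})}(x - \cdot) \subset I_\mathfrak{p}^*$, it follows that
\[
|T_\mathfrak{T}^{\mathrm{err}} f(x)| \lesssim \sum_{s \in S(x)} D^{\alpha_1 (s - s(\mathfrak{p}_{\mathrm{top}}(x)))} \bigl(|K_s| \ast |f|\bigr)(x) \lesssim M^K f(x) \sum_{s \leq s(\mathfrak{p}_{\mathrm{top}}(x))} D^{\alpha_1 (s - s(\mathfrak{p}_{\mathrm{top}}(x)))} \lesssim M^K f(x),
\]
where the geometric sum converges since $D \geq 2$ and $\alpha_1 \geq 1$. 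Hence $\|T_\mathfrak{T}^{\mathrm{err}} f\|_2 \lesssim \|M^K\|_{2 \to 2}\|f\|_2$.

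Combining the two bounds and applying Cauchy--Schwarz pairing with $g$ yields the lemma. The principal obstacle, and the reason the crude pointwise estimate $|T_\mathfrak{T}^{\mathrm{err}} f(x)| \lesssim |S(x)| \cdot M^K f(x)$ is useless on its own, is converting the a priori unbounded multiplicity $|S(x)|$ into a bounded geometric sum; this is accomplished by the scale-dependent polynomial norm bound for $Q_x - Q_\mathfrak{T}$ on tiles strictly below $\mathfrak{p}_{\mathrm{top}}(x)$.
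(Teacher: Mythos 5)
Your proof is correct and follows essentially the same route as the paper: for fixed $x$ show the contributing scales form an interval by convexity, split $T_\mathfrak{T}$ into a main term controlled by $R^K(M_{-Q_{\mathfrak{T}}}f)$ and an error term bounded via $\|Q_x - Q_\mathfrak{T}\|_{I_{\mathfrak{p}}} \leq 5$ from the tree condition $4\mathfrak{p} < \operatorname{top}\mathfrak{T}$, together with the $D^{\alpha_1(s - \max\sigma)}$ decay from \cref{PolynomialBound}, yielding a convergent geometric sum dominated by $M^Kf(x)$. The only cosmetic difference is that the paper runs the norm comparison over anisotropic balls $B_\rho(x, D^s/4)$ centered at $x$ rather than over the cubes $I_\mathfrak{p}^*$ directly, which is an equivalent computation.
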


\begin{proof}
    We fix $x$. Define 
    \[
        \sigma = \sigma(\mathfrak{T}, x) = \{s \, : \, \exists \mathfrak{p} \in \mathfrak{T}, \, s(\mathfrak{p}) = s, \, x \in E(\mathfrak{p})\}\,.
    \]
    This is the set of all scales $s$ for which there exists a tile contributing to $T_\mathfrak{T}f(x)$. It is a convex subset of $\mathbb{Z}$ since $\mathfrak{T}$ is convex and if $\mathfrak{p} \leq \mathfrak{p}' \leq \mathfrak{p}''$ then $E(\mathfrak{p}) \cap E(\mathfrak{p}'') \subset E(\mathfrak{p}')$.
    
    We write
    \begin{align*}
        |T_\mathfrak{T} f(x)| &= \left| \sum_{s \in \sigma} \int e(-Q_x(y)) K_s(x-y) f(y) \, \mathrm{d}y \right|\\
        &\leq \left| \sum_{s \in \sigma} \int e(-Q_\mathfrak{T}(y)+Q_\mathfrak{T}(x)) K_s(x-y) f(y) \, \mathrm{d}y \right| \\
        &\quad + \sum_{s \in \sigma} \int |e(-Q_\mathfrak{T}(y) + Q_\mathfrak{T}(x)+Q_x(y) - Q_x(x)) - 1| |K_s(x-y)||f(y)| \, \mathrm{d}y\\
        &\eqqcolon A(x) + B(x)\,.
    \end{align*}
    The first term $A(x)$ is bounded by $R^{K}(M_{-Q_\mathfrak{T}} f)(x)$. To treat $B(x)$, note that if $K_s(x-y) \neq 0$, then $\rho(x-y) \leq D^s/4$. Therefore, for all such $x,y$, it holds that
    \begin{align*}
        |e(-Q_\mathfrak{T}(y) + Q_\mathfrak{T}(x)+Q_x(y) - Q_x(x)) - 1| &\lesssim \|Q_x - Q_\mathfrak{T}\|_{B_\rho(x, D^s/4)}\\
        &\lesssim D^{\alpha_1(s - \max \sigma)} \|Q_x - Q_\mathfrak{T}\|_{B_\rho(x, D^{\max \sigma}/4)} \,,
    \end{align*}
    where we applied \cref{PolynomialBound}. There exists a tile $\mathfrak{p} \in \mathfrak{T}$ of scale $\max \sigma$ with $x \in E(\mathfrak{p})$. Using \cref{PolynomialBound} once more, we can estimate
    \[
         \|Q_x - Q_\mathfrak{T}\|_{B_\rho(x, D^{\max \sigma}/4)} \leq \|Q_x - Q_\mathfrak{T}\|_{I_\mathfrak{p}^*} \lesssim \|Q_x - Q_\mathfrak{T}\|_{I_\mathfrak{p}} \leq 5\,.
    \]
    Therefore we have
    \begin{align*}
        B(x)&\leq \sum_{s \in \sigma} D^{\alpha_1(s-\max \sigma)}  \int  |K_s(x-y)||f(y)| \, \mathrm{d}y\\
        &\lesssim \sum_{s \leq \max \sigma} D^{\alpha_1(s - \max \sigma)} M^Kf(x)\lesssim M^Kf(x)\,.
    \end{align*}
    This completes the proof.
\end{proof}

Now we want to show estimates with decay in $\dens(\mathfrak{T})$. We will need the following well known result:
\begin{lemma}
\label{nontangential}
The nontangential maximal function 
\[
    R_\mathcal{N}^Kf(x) = \sup_{\underline{\sigma} \leq \overline{\sigma}} \sup_{\rho(x-z) \leq C D^{\underline{\sigma}}} \left|\sum_{s = \underline{\sigma}}^{\overline{\sigma}} \int K_s(z-y) f(y) \, \mathrm{d}y \right|
\]
satisfies the estimate
\[
    \|R_\mathcal{N}^K f\|_{2 \to 2} \lesssim_{C} A\,.
\]
\end{lemma}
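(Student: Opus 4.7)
The plan is to establish the pointwise domination
\[
    R_\mathcal{N}^K f(x) \lesssim_C R^K f(x) + A \cdot Mf(x),
\]
where $M$ is the anisotropic Hardy--Littlewood maximal function and $R^K$ (the tangential maximally truncated singular integral at $x$) is known to satisfy $\|R^K\|_{2\to 2} \lesssim A$ by the standard Calderón--Zygmund theorem (Cotlar's inequality applied to the full convolution operator $T$ together with \eqref{kernelL2bound}). Once the pointwise bound is in hand, the lemma follows by taking $L^2$ norms and using $\|M\|_{2\to 2}\lesssim 1$.

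To establish the pointwise bound, fix $x$, scales $\underline{\sigma}\le\overline{\sigma}$, and a point $z$ with $\rho(x-z)\le CD^{\underline{\sigma}}$. Split the defining sum as
\[
    \sum_{s=\underline{\sigma}}^{\overline{\sigma}} \int K_s(z-y) f(y)\,\mathrm{d}y
    = \sum_{s=\underline{\sigma}}^{\overline{\sigma}} \int K_s(x-y) f(y)\,\mathrm{d}y
    + \sum_{s=\underline{\sigma}}^{\overline{\sigma}} \int [K_s(z-y) - K_s(x-y)] f(y)\,\mathrm{d}y.
\]
The first sum has absolute value at most $R^Kf(x)$. For the error term, invoke the Lipschitz bound \eqref{KernelHoelderBound} to obtain $|K_s(z-y)-K_s(x-y)|\lesssim A\rho(z-x)D^{-s(1+|\alpha|)}$. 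Using the support property \eqref{KernelSupport} together with $\rho(x-z)\le CD^s$ for $s\ge \underline{\sigma}$, the difference $K_s(z-\cdot)-K_s(x-\cdot)$ is supported in $B_\rho(x,O_C(D^s))$, a set of volume $\lesssim_C D^{s|\alpha|}$, so
\[
    \bigg|\int [K_s(z-y)-K_s(x-y)] f(y)\,\mathrm{d}y\bigg| \lesssim_C A\,\frac{\rho(z-x)}{D^s}\,Mf(x).
\]
Summing the resulting geometric series $\sum_{s\ge \underline{\sigma}} D^{-s}\rho(z-x) \lesssim D^{-\underline{\sigma}}\rho(z-x)\lesssim C$ gives the claimed control by $A\cdot Mf(x)$, uniformly in the choice of $z$, $\underline{\sigma}$, $\overline{\sigma}$.

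The step that requires a little care is tracking the volume/support of $K_s(z-\cdot)-K_s(x-\cdot)$ under the anisotropic distance, which only amounts to a routine quasi-triangle estimate for $\rho$; no genuine obstacle appears. If one wishes to avoid citing $\|R^K\|_{2\to 2}\lesssim A$ as a black box, the identical Cotlar-type argument can be executed directly for $R_\mathcal{N}^K$ via the $L^2$ bound $\|Tf\|_2\le A\|f\|_2$ provided by \eqref{kernelL2bound}, so the lemma is essentially a bookkeeping exercise with the kernel bounds \eqref{KernelUpperBound}--\eqref{KernelSupport}.
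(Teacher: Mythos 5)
Your proof is correct. The argument is essentially the "change of aperture" step that the paper's proof implicitly offloads to \cite{Stein1993}~II.2.5.1: fixing $z$ in the cone over $x$, you compare $\sum_s K_s(z-\cdot)$ to $\sum_s K_s(x-\cdot)$ via the Lipschitz bound \eqref{KernelHoelderBound} and the support condition \eqref{KernelSupport}, sum a geometric series in $s$, and absorb the error into $A\cdot Mf(x)$. The calculation holds up: the difference $K_s(z-\cdot)-K_s(x-\cdot)$ is indeed supported in $B_\rho(x,O_C(D^s))$ because $\rho$ is a genuine metric here (not merely a quasi-metric, since $\alpha_1\ge 1$), and the geometric sum produces exactly $\rho(z-x)D^{-\underline{\sigma}}\le C$.

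Where you diverge from the paper is in the architecture. The paper cites Stein I.7.3 directly for the nontangential bound (with small aperture and sharp truncation) and then says two modifications are needed: enlarge the aperture, and replace sharp by smooth truncation. You instead prove a clean pointwise domination $R_\mathcal{N}^K f(x)\lesssim_C R^K f(x) + A\,Mf(x)$ and quote the tangential bound $\|R^K\|_{2\to2}\lesssim A$ as classical. This is logically fine, but note that in the paper's own dependency graph this direction is reversed: in the proof of \cref{MainThmAnisotopic}, the tangential bound $\|R^K\|_{2\to2}\lesssim A$ is obtained \emph{from} \cref{nontangential} via the trivial pointwise inequality $R^K f\le R_\mathcal{N}^K f$. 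So you are treating as a black box the very estimate that the paper later derives from this lemma. There is no mathematical circularity, since the tangential Cotlar bound can certainly be established independently (your parenthetical acknowledges this), but a self-contained write-up should make the tangential bound explicit rather than implicitly lean on a result whose only appearance in the paper is as a corollary of the statement you are proving. Your pointwise lemma is the useful and nontrivial part; the rest is exactly the bookkeeping you describe.
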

\begin{proof}
    See \cite{Stein1993}, I. 7.3. Note that the situation there is slightly different. In particular, the bound is shown with the inner supremum over $\rho(x-y) \leq cD^{\underline{\sigma}}$, for some small $c$, and with a sharp truncation of the singular integral. It is however possible to change the constant $c$ while losing only a constant factor, see \cite{Stein1993} II. 2.5.1. Furthermore the difference between the sharply truncated singular integral and the smoothly truncated singular integral is bounded by $A M$, where $M$ is the Hardy-Littlewood maximal function.
\end{proof}

We further need two definitions from \cite{ZK2021}.

\begin{definition}
    For a finite collection of tiles $\mathfrak{S} \subset  \mathfrak{P}$ denote
    \begin{itemize}
        \item by $\mathcal{J}(\mathfrak{S})$ the collection of maximal $D$-adic cubes $J$ such that $100 D J$ contains no $I_\mathfrak{p}$, $\mathfrak{p} \in \mathfrak{S}$. 
        \item by $\mathcal{L}(\mathfrak{S})$ the collection of maximal $D$-adic cubes $L$ such that $L \subset I_\mathfrak{p}$ for some $\mathfrak{p} \in \mathfrak{S}$ and $I_\mathfrak{p} \not\subset L$ for all $\mathfrak{p} \in \mathfrak{S}$. 
    \end{itemize}
    For a set of pairwise disjoint $D$-adic cubes $\mathcal{J}$ we define the projection
    \[
        P_\mathcal{J} f = \sum_{J \in \mathcal{J}} \mathbf{1}_J \frac{1}{|J|} \int_J f\,.
    \]
\end{definition}

The following is a refined version of \cref{TrivialTreeBound}. 
\begin{lemma}[\cite{ZK2021}, Thm. 5.6]
    \label{ProjTreeBound}
    Let $\mathfrak{T} \subset \mathfrak{P}$ be a tree and denote $\mathcal{L} = \mathcal{L}(\mathfrak{T})$ and $\mathcal{J} = \mathcal{J}(\mathfrak{T})$. Suppose that $f, g \in L^2(\mathbb{T}^\mathbf{d})$. Then it holds that
    \[
        \left| \int_{\mathbb{T}^\mathbf{d}} g T_\mathfrak{T} f \right| \lesssim A \|P_{\mathcal{J}} |f|\|_2 \|P_\mathcal{L} |g|\|_{2}\,.
    \]
\end{lemma}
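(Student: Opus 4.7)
The plan is to refine the decomposition used in the proof of \cref{TrivialTreeBound} by exploiting the geometric properties of the cubes in $\mathcal{J}$ and $\mathcal{L}$. I would begin with the same splitting $T_\mathfrak{T}f(x) = A_\mathfrak{T}(x) + B_\mathfrak{T}(x)$, where $A_\mathfrak{T}(x)$ is the singular integral truncated to the scale set $\sigma(\mathfrak{T},x)$ with phase frozen at $Q_\mathfrak{T}$ applied to $M_{-Q_\mathfrak{T}}f$, and $B_\mathfrak{T}(x)$ is the error arising from replacing $Q_x$ by $Q_\mathfrak{T}$.

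For the error term, the proof of \cref{TrivialTreeBound} already gives the pointwise bound $|B_\mathfrak{T}(x)| \lesssim M^Kf(x)$. To upgrade this to a projection bound, I would use that if $y \in J \in \mathcal{J}$ and a tile $\mathfrak{p} \in \mathfrak{T}$ of scale $s(\mathfrak{p})$ contributes a nonzero integrand at $y$ for some $x \in E(\mathfrak{p})$, then the defining maximality of $\mathcal{J}$ together with the support condition \eqref{KernelSupport} forces $s(\mathfrak{p}) \geq s(J) + O(1)$. The H\"older estimate \eqref{KernelHoelderBound} then shows that $|K_{s(\mathfrak{p})}(x-y)|$ is essentially constant in $y$ over $J$, so that $|f(y)|$ may be replaced by $P_\mathcal{J}|f|(y)$ up to a harmless multiplicative constant. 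A symmetric argument in the $x$-variable using the definition of $\mathcal{L}$ replaces $|g(x)|$ by $P_\mathcal{L}|g|(x)$ after integrating, and $L^2$-boundedness of the anisotropic Hardy--Littlewood maximal function then yields the desired bilinear bound for $\int g\, B_\mathfrak{T}$.

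For the main term $A_\mathfrak{T}$, I would use the nontangential maximal function $R_\mathcal{N}^K$ from \cref{nontangential}. Fix $x$ and let $L \in \mathcal{L}$ be the cube containing $x$. Every $\mathfrak{p} \in \mathfrak{T}$ with $x \in E(\mathfrak{p})$ satisfies $L \subsetneq I_\mathfrak{p}$ by the definition of $\mathcal{L}$, so $s(\mathfrak{p}) > s(L)$ and $\rho(x-z) \lesssim D^{\min \sigma(\mathfrak{T},x)}$ for every $z \in L$. Hence $|A_\mathfrak{T}(x)| \leq R_\mathcal{N}^K(M_{-Q_\mathfrak{T}}f)(z)$ for each such $z$, and averaging over $z \in L$ dominates $|A_\mathfrak{T}|$ by a function that is constant on each cube of $\mathcal{L}$. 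Integrating against $|g|$ absorbs $|g|$ into $P_\mathcal{L}|g|$, and \cref{nontangential} together with unimodularity of $M_{-Q_\mathfrak{T}}$ yields $|\int g\, A_\mathfrak{T}| \lesssim A\,\|P_\mathcal{L}|g|\|_2 \|f\|_2$.

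The main obstacle is to further reduce the $\|f\|_2$ factor to $\|P_\mathcal{J}|f|\|_2$ in the main-term estimate, since the nontangential maximal function does not obviously respect the $\mathcal{J}$-structure of $f$. I would address this by dominating $R_\mathcal{N}^K(M_{-Q_\mathfrak{T}}f)(z)$ in absolute value by a maximal average of positive kernels whose $y$-dependence is slowly varying on $\mathcal{J}$-cubes: taking absolute values inside the integral eliminates the modulation, and the same H\"older-smoothness argument as for $B_\mathfrak{T}$ effects the replacement of $|f|$ by $P_\mathcal{J}|f|$. The delicate point is that this averaging must be carried out \emph{inside} the nontangential supremum, which requires verifying that the $\mathcal{J}$-scale lies below every admissible truncation scale at $z$; this is exactly what the membership $y \in J \in \mathcal{J}$ together with the support condition \eqref{KernelSupport} guarantees.
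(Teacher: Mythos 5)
Your plan correctly identifies the role of $R^K_\mathcal{N}$ and the need to compare the nontangential bound against $\mathcal{L}$-cubes, but it breaks down exactly at the point you yourself flag as the main obstacle, and the mechanism you propose to resolve it does not work.

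You propose to dominate $R^K_\mathcal{N}(M_{-Q_\mathfrak{T}}f)$ by ``a maximal average of positive kernels'' obtained by taking absolute values inside the truncated sum, and to then replace $|f|$ by $P_\mathcal{J}|f|$ on each $J$. The first step is already fatal: the truncated sum $\sum_{s=\underline{\sigma}}^{\overline{\sigma}} \int K_s(z-y)f(y)\,\mathrm{d}y$ is bounded on $L^2$ precisely because of cancellation between scales, and each $K_s$ has $\|K_s\|_{L^1}\sim A$. Taking absolute values inside yields $\sum_{s}\int|K_s(z-y)||f(y)|\,\mathrm{d}y$, whose operator norm grows like $A(\overline{\sigma}-\underline{\sigma})$ and is therefore unbounded as the number of scales increases. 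No pointwise domination of $R^K_\mathcal{N}$ by an $L^2$-bounded maximal average of positive kernels exists, so this route to $\|P_\mathcal{J}|f|\|_2$ is closed.

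The paper instead performs a three-way decomposition rather than the two-way $A_\mathfrak{T}+B_\mathfrak{T}$ you carry over from \cref{TrivialTreeBound}. It inserts $P_\mathcal{J}$ \emph{before} the singular integral: the main term becomes $R^K_\mathcal{N}\bigl(P_\mathcal{J}(M_{-Q_\mathfrak{T}}f)\bigr)$, which is bounded directly by $A\|P_\mathcal{J}(M_{-Q_\mathfrak{T}}f)\|_2\leq A\|P_\mathcal{J}|f|\|_2$ via \cref{nontangential}, with no need to tamper with the interior of the nontangential supremum. This creates a new error term in which $(1-P_\mathcal{J})(e(-Q_\mathfrak{T})f)$ has mean zero on each $J\in\mathcal{J}$; that cancellation, combined with the H\"older bound \eqref{KernelHoelderBound} (subtracting the $J$-average of the kernel), is what makes this error controllable. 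Your two-term decomposition has no place for that Calder\'on--Zygmund style cancellation argument, and it is exactly the missing ingredient. A smaller issue: for the phase error term (the paper's $C(x)$), the replacement of $|f|$ by $P_\mathcal{J}|f|$ does not rely on H\"older continuity of $K_s$ as you suggest; it is free, because after using only the upper bound \eqref{KernelUpperBound} the estimate already depends on $f$ only through the integrals $\int_J|f|$, which equal $\int_J P_\mathcal{J}|f|$ by definition.
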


\begin{proof}
    Fix $L\in \mathcal{L}$. We will show that for $x \in L$
    \[
        |T_\mathfrak{T} f(x)| \lesssim \inf_{z \in L} R^K_\mathcal{N} P_\mathcal{J}(M_{-Q_\mathfrak{T}} f) (z) + A( \inf_{z \in L} M P_\mathcal{J} |f|(z) + \inf_{z \in L} S P_\mathcal{J}|f|(z))\,,
    \]
    where $M$ is the Hardy-Littlewood maximal function and $S$ is an operator depending on $\mathfrak{T}$ with $\|S\|_{2 \to 2} \lesssim 1$. This implies the claimed inequality by \cref{nontangential} and the $L^2$ boundedness of the Hardy-Littlewood maximal function.
    
    Define $\sigma = \sigma(\mathfrak{T},x)$ as in the proof of \cref{TrivialTreeBound} and write
    \begin{align*}
        |T_\mathfrak{T} f(x)| &= \left| \sum_{s \in \sigma} \int e(-Q_x(y)+Q_x(x)) K_s(x-y)f(y) \, \mathrm{d}y \right|\\
        &\leq \left| \sum_{s \in \sigma} \int  K_s(x-y)e(Q_\mathfrak{T}(x)) P_\mathcal{J}(e(-Q_\mathfrak{T})f)(y) \, \mathrm{d}y \right|\\
        &+ \left| \sum_{s \in \sigma} \int  K_s(x-y)e(Q_{\mathfrak{T}}(x))(1 - P_\mathcal{J})(e(-Q_\mathfrak{T})f)(y) \, \mathrm{d}y \right|\\
        &+ \sum_{s \in \sigma} \int |e(-Q_\mathfrak{T}(y) + Q_\mathfrak{T}(x)+Q_x(y) - Q_x(x)) - 1||K_s(x-y)||f(y)| \, \mathrm{d}y\\
        &\eqqcolon A(x) + B(x) + C(x)\,.
    \end{align*}
    Consider the first term $A(x)$. Since $x \in I_\mathfrak{p}$ for some tile $\mathfrak{p} \in \mathfrak{T}$ with $s(\mathfrak{p}) = \min \sigma$, it holds by the definition of $\mathcal{L}$ that $s(L) < \min \sigma$. Thus $\rho(x-z) \lesssim D^{ \min \sigma}$ for all $z \in L$, which implies that 
    \[
        A(x) \leq \inf_{z \in L} R^K_\mathcal{N} P_\mathcal{J}(M_{-Q_\mathfrak{T}} f)(z)\,.
    \]
    We turn to $C(x)$.
    In the proof of \cref{TrivialTreeBound}, we showed that for all $y$ with $K_s(x-y) \neq 0$, it holds that
    \begin{align*}
        |e(-Q_\mathfrak{T}(y) + Q_\mathfrak{T}(x)+Q_x(y) - Q_x(x)) - 1| \lesssim D^{\alpha_1(s -\max\sigma)}\,.
    \end{align*}
    Combining this, the upper bound \eqref{KernelUpperBound} for $K_s$, and the fact that $\mathcal{J}$ is a partition of $\mathbb{T}^\mathbf{d}$ we obtain 
    \begin{align*}
        C(x) &\lesssim A \sum_{s \in \sigma} D^{\alpha_1(s - \max \sigma)} D^{-s|\alpha|} \sum_{J \in \mathcal{J} \, : \, J \cap B_\rho(x, D^s/4) \neq \emptyset} \int_J |f(y)| \, \mathrm{d}y\,.
    \end{align*}
    Note that this expression does not change upon replacing $|f|$ with  $P_\mathcal{J}|f|$.
    If $J$ intersects $B_\rho(x, D^s/4)$ and $\mathfrak{p} \in \mathfrak{T}$ is a tile of scale $s$ with $x \in E(\mathfrak{p})$, then $J$ also intersects $I_\mathfrak{p}^*$. If $s(J) \geq s(\mathfrak{p})$ then this implies $I_\mathfrak{p} \subset 3J$, which contradicts the definition of $\mathcal{J}$. Thus $J \subset I_\mathfrak{p}^* \subset B_\rho(x, CD^s)$. As observed above, it holds that $\rho(x-z) \lesssim D^s$ for all $z \in L$. We conclude that, for all $z \in L$ and all $J \in \mathcal{J}$ with $J \cap B_\rho(x, D^s/4) \neq \emptyset$, it holds that $J \subset B_\rho(z, CD^s)$. Hence 
    \begin{align*}
        C(x) \lesssim A \sum_{s \in \sigma} D^{\alpha_1(s - \max \sigma)} \inf_{z \in L} M P_\mathcal{J} |f|(z) \lesssim A \inf_{z \in L} M P_\mathcal{J} |f|(z)\,.
    \end{align*}
    It remains to take care of the second term $B$. Denoting $h(y) = e(-Q_\mathfrak{T}(y))f(y)$, we have that
    \begin{align*}
        B(x)&=\sum_{\mathfrak{p} \in \mathfrak{T}} \mathbf{1}_{E(\mathfrak{p})} \sum_{J \in \mathcal{J}, J \subset 3I_\mathfrak{p}} \int_J K_{s(\mathfrak{p})}(x-y)(h(y) - \frac{1}{|J|}\int_J h(z) \, \mathrm{d}z) \, \mathrm{d}y\\
        &= \sum_{\mathfrak{p} \in \mathfrak{T}} \mathbf{1}_{E(\mathfrak{p})} \sum_{J \in \mathcal{J}, J \subset 3I_\mathfrak{p}} \int_J (K_{s(\mathfrak{p})}(x-y) - \frac{1}{|J|}\int_J K_{s(\mathfrak{p})}(x-z) \, \mathrm{d}z)h(y) \, \mathrm{d}y\\
        &\leq A \sum_{\mathfrak{p} \in \mathfrak{T}} \mathbf{1}_{E(\mathfrak{p})} \sum_{J \in \mathcal{J}, J \subset 3I_\mathfrak{p}} \frac{\diam_\rho(J)}{D^{s(\mathfrak{p})|\alpha| +1}} \int_J |f(y)| \, \mathrm{d}y\\
        &\leq A \sum_{I \in \mathcal{H}} \mathbf{1}_I \sum_{J \in \mathcal{J}, J \subset 3I} \frac{\diam_\rho(J)}{D^{s(I)|\alpha| + 1}} \int_J P_\mathcal{J}|f|\,\mathrm{d}y\,,
    \end{align*}
    where $\mathcal{H} = \{I_\mathfrak{p} \, : \, \mathfrak{p} \in \mathfrak{T}\}$. Define $S$ by
    $$
        S f(x) = \sum_{I \in \mathcal{H}} \mathbf{1}_I \sum_{J \in \mathcal{J}, J \subset 3I} D^{s(J) - s(I)} \frac{1}{|I|} \int_J f\,\mathrm{d}y\,.
    $$
    Using that $\diam_\rho(J) \lesssim D^{s(J)}$, we can then estimate 
    $$
        B(x) \lesssim  A S P_\mathcal{J}|f|(x)\,.
    $$
    Note that $SP_\mathcal{J}|f|$ is constant on all cubes $L \in \mathcal{L}$, and therefore we have that
    $SP_\mathcal{J}|f|(x) = \inf_{z \in L} SP_\mathcal{J}|f|(z)$. Finally, we show that $\|S\|_{2 \to 2} \lesssim 1$. It holds that
    \begin{align*}
        \left|\int g Sf  \, \mathrm{d}x \right|&\lesssim \sum_{J \in \mathcal{J}} \int_J |f(y)| \, \mathrm{d}y \sum_{I \in \mathcal{H}, J \subset 3I} D^{s(J) - s(I)} \frac{1}{|I|}\int_I |g(x)| \, \mathrm{d}x \\
        &\lesssim \sum_{J \in \mathcal{J}} \int_J |f(y)| Mg(y) \, \mathrm{d}y \sum_{I \in \mathcal{H}, J \subset 3I} D^{s(J) - s(I)}\\
        &\lesssim \int |f(y)| Mg(y) \, \mathrm{d}y \lesssim \|f\|_2 \|g\|_2\,.
    \end{align*}
    This completes the proof.
\end{proof}

Finally, we use \cref{ProjTreeBound} to obtain an estimate for trees with decay in the density $\dens(\mathfrak{T})$.
\begin{lemma}[\cite{ZK2021}, Cor. 5.10]
    \label{SumTreeBound}
    Let $\mathfrak{T} \subset \mathfrak{P}$ be a tree. Then it holds that
    \[
        \|T_\mathfrak{T}\|_{2\to2} \lesssim  A \dens(\mathfrak{T})^{1/2}\,.
    \]
\end{lemma}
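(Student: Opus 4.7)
The plan is to bootstrap from \cref{ProjTreeBound}, which bounds $|\int g T_\mathfrak{T} f|$ by $A \|P_\mathcal{J}|f|\|_2 \|P_\mathcal{L}|g|\|_2$. Since each piece $T_\mathfrak{p} f$ is supported on $E(\mathfrak{p})$, the operator $T_\mathfrak{T} f$ is supported on $G := \bigcup_{\mathfrak{p} \in \mathfrak{T}} E(\mathfrak{p})$, so by duality I may restrict attention to $g$ supported on $G$. Cauchy--Schwarz cube by cube then yields
\[
    \|P_\mathcal{L}(\mathbf{1}_G|g|)\|_2^2 \leq \Big(\sup_{L \in \mathcal{L}} \frac{|L \cap G|}{|L|}\Big) \|g\|_2^2,
\]
and together with the trivial $\|P_\mathcal{J}|f|\|_2 \leq \|f\|_2$ this reduces the proof to the density estimate $|L \cap G|/|L| \lesssim \dens(\mathfrak{T})$ for every $L \in \mathcal{L}$.

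Fix $L \in \mathcal{L}$. Maximality of $L$ in $\mathcal{L}$ forces the existence of some $\mathfrak{q}_L \in \mathfrak{T}$ with $I_{\mathfrak{q}_L} \subset \hat L$, and $D$-adic nesting together with the defining properties of $\mathcal{L}$ gives either $I_{\mathfrak{q}_L} = \hat L$ or $I_{\mathfrak{q}_L} \subsetneq \hat L$ disjoint from $L$. In the first case I set $\mathfrak{p}_L' := \mathfrak{q}_L$; in the second I take $\mathfrak{p}_L'$ to be the tile in $\mathfrak{P}_{\hat L}$ whose uncertainty region contains $Q_{\tp \mathfrak{T}}$, observing $\mathfrak{p}_L' \in \mathfrak{P}_{good}$ since $\hat L \supset I_{\mathfrak{q}_L} \not\subset E_1$. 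Using $4\mathfrak{q}_L < \tp \mathfrak{T}$, the defining property of $\mathfrak{p}_L'$ in the second case, and \cref{ComparingPolBound} to suppress $\|\cdot\|_{I_{\mathfrak{q}_L}}$ relative to $\|\cdot\|_{\hat L}$, a triangle inequality establishes $10 \mathfrak{q}_L \leq 10 \mathfrak{p}_L'$. A parallel triangle inequality, applied to any $x \in L \cap G$ with witness tile $\mathfrak{q}(x) \in \mathfrak{T}$ (for which the $\mathcal{L}$-definition forces $I_{\mathfrak{q}(x)} \supset \hat L$), shows $\|Q_x - Q_{\mathfrak{p}_L'}\|_{\hat L} \leq 1 + 4 + 4 \leq 10$, so $L \cap G \subset \overline{E}(10 \mathfrak{p}_L')$. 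Applying the density estimate to the pair $(\mathfrak{q}_L, \mathfrak{p}_L')$ now yields
\[
    |L \cap G| \leq |\overline{E}(10 \mathfrak{p}_L')| \leq 10^{\dim \mathcal{Q}} \dens(\mathfrak{T}) |\hat L| \lesssim \dens(\mathfrak{T}) |L|,
\]
which is the required pointwise bound.

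The main technical nuisance is the case split for $\mathfrak{p}_L'$. A uniform choice in which $Q_{\mathfrak{p}_L'}$ is merely close to $Q_{\tp \mathfrak{T}}$ produces an additive $O(1)$ loss in the triangle inequality verifying $10 \mathfrak{q}_L \leq 10 \mathfrak{p}_L'$ when $I_{\mathfrak{q}_L} = \hat L$, and this loss cannot be absorbed by any fixed dilation factor because the two spatial cubes agree, so no shrinkage is available from \cref{ComparingPolBound}. Setting $\mathfrak{p}_L' := \mathfrak{q}_L$ in that case sidesteps the problem; in the other case the $10^{-4}$ shrinkage factor of \cref{ComparingPolBound} renders the loss harmless and the threshold $a = 10$ comfortably accommodates the remaining constants.
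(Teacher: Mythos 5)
Your proof is correct and follows essentially the same route as the paper: reduce via \cref{ProjTreeBound} and a cube-wise Cauchy--Schwarz to the pointwise density bound $|L \cap G| \lesssim \dens(\mathfrak{T})|L|$, then construct a witness tile at scale $s(\hat L)$ and apply the definition of density with parameter $a = 10$. Your explicit case split in choosing $\mathfrak{p}_L'$ (taking $\mathfrak{p}_L' = \mathfrak{q}_L$ when $I_{\mathfrak{q}_L} = \hat L$, and otherwise the tile in $\mathfrak{P}_{\hat L}$ carrying $Q_{\tp\mathfrak{T}}$) is in fact slightly cleaner than the paper's phrasing "the unique tile with $\mathfrak{p}_L \le \mathfrak{p}'$, $I_{\mathfrak{p}'}=\hat L$ and $Q_\mathfrak{T}\in\mathcal{Q}(\mathfrak{p}')$," since when $I_{\mathfrak{p}_L}=\hat L$ and $Q_\mathfrak{T}\notin\mathcal{Q}(\mathfrak{p}_L)$ no such tile exists; your version, using only $\|Q_\mathfrak{T}-Q_{\mathfrak{q}_L}\|_{I_{\mathfrak{q}_L}}\le 4$ from $4\mathfrak{q}_L<\tp\mathfrak{T}$ in place of the bound $\le 1$, establishes $10\mathfrak{q}_L\le 10\mathfrak{p}_L'$ in both cases without needing the order relation $\mathfrak{q}_L\le\mathfrak{p}_L'$.
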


\begin{proof}
    Let $L \in \mathcal{L}(\mathfrak{T})$. 
    By definition of $\mathcal{L}$, there exists some tile $\mathfrak{p}_L \in \mathfrak{T}$ with $I_{\mathfrak{p}_L} \subset \hat L$. 
    Let $\mathfrak{p}'$ be the unique tile with $\mathfrak{p}_L \leq \mathfrak{p}'$, $I_{\mathfrak{p}'} = \hat L$ and $Q_\mathfrak{T} \in \mathcal{Q}(\mathfrak{p}')$. Then it holds that $10\mathfrak{p}_L \leq 10 \mathfrak{p}'$: This is obvious if $\mathfrak{p}_L = \mathfrak{p}'$. If $\mathfrak{p}_L \neq \mathfrak{p}'$ then $I_{\mathfrak{p}_L} \subsetneq I_{\mathfrak{p}'}$, which implies by \cref{ComparingPolBound} that for all $Q \in \mathcal{Q}(10 \mathfrak{p}')$
    \begin{align*}
        \|Q - Q_{\mathfrak{p}_L}\|_{I_{\mathfrak{p}_L}} 
        &\leq \|Q - Q_\mathfrak{T}\|_{I_{\mathfrak{p}_L}} + \|Q_\mathfrak{T} - Q_{\mathfrak{p}_L}\|_{I_{\mathfrak{p}_L}} \\
        &\leq 10^{-4}\|Q - Q_\mathfrak{T}\|_{I_{\mathfrak{p}'}} + \|Q_\mathfrak{T} - Q_{\mathfrak{p}_L}\|_{I_{\mathfrak{p}_L}}\\
        &\leq 10^{-4}(\|Q - Q_{\mathfrak{p}'}\|_{I_{\mathfrak{p}'}} + \|Q_{\mathfrak{p}'} - Q_\mathfrak{T}\|_{I_\mathfrak{p}'}) + \|Q_\mathfrak{T} - Q_{\mathfrak{p}_L}\|_{I_{\mathfrak{p}_L}}\\
        &\leq 10^{-4}(10 +1) + 1 \leq 10\,.
    \end{align*}
    Furthermore, every tile $\mathfrak{p} \in \mathfrak{T}$ with $L \cap I_\mathfrak{p} \neq \emptyset$ satisfies $10 \mathfrak{p}' \leq \mathfrak{p}$. Indeed, if $I_\mathfrak{p} \cap L \neq \emptyset$, then $I_{\mathfrak{p}'} = \hat L \subset I_\mathfrak{p}$ and for all $Q \in \mathcal{Q}(\mathfrak{p})$ it holds that
    \begin{align*}
        \|Q - Q_{\mathfrak{p}'}\|_{I_{\mathfrak{p}'}} &\leq \|Q - Q_\mathfrak{p}\|_{I_{\mathfrak{p}'}} + \|Q_\mathfrak{p} - Q_\mathfrak{T}\|_{I_{\mathfrak{p}'}} + \|Q_\mathfrak{T} - Q_{\mathfrak{p}'}\|_{I_{\mathfrak{p}'}} \\
        &\leq \|Q - Q_\mathfrak{p}\|_{I_{\mathfrak{p}}} + \|Q_\mathfrak{p} - Q_\mathfrak{T}\|_{I_{\mathfrak{p}}} + 4 \leq 1 + 4 + 4 \leq 10\,.
    \end{align*}
    Let $E(L) = \cup_{\mathfrak{p} \in \mathfrak{T}} E(\mathfrak{p})\cap L$. Since $10\mathfrak{p}' \leq \mathfrak{p}$ for all $\mathfrak{p} \in \mathfrak{T}$ with $E(\mathfrak{p}) \cap L \neq \emptyset$, it holds that $E(L) \subset \overline{E}(10 \mathfrak{p}')$. 
    Thus 
    \begin{align*}
        |E(L)| \leq |\overline{E}(10 \mathfrak{p}')| \leq 10^{\dim \mathcal{Q}} \dens(\mathfrak{p}_L) |I_{\mathfrak{p}'}| \lesssim \dens(\mathfrak{T}) |L|\,.
    \end{align*}
    We write $E = \cup_L E(L)$. Then \cref{ProjTreeBound} implies that 
    \begin{align*}
        \left| \int g T_\mathfrak{T} f \right| = \left| \int g\mathbf{1}_E T_\mathfrak{T} f \right|\lesssim A \|f\|_2 \|P_\mathcal{L}(\mathbf{1}_E g)\|_{2}\,.
    \end{align*}
    The estimates for the size of $E(L)$ allow us to obtain an improved bound for $P_\mathcal{L} \mathbf{1}_E$:
    \begin{align*}
        \|P_\mathcal{L}(\mathbf{1}_E g)\|_{2}^{2}
        &= \sum_L |L| \left(\frac{1}{|L|} \int_L \mathbf{1}_E |g| \right)^{2}\\
        &\leq \sum_L |L|  \left(\frac{1}{|L|}\int_L \mathbf{1}_E^2\right) \left(\frac{1}{|L|}\int_L |g|^{2} \right)\leq \dens(\mathfrak{T})\|g\|_2^2\,.
    \end{align*}
    This completes the proof.
\end{proof}

\subsection{Separated Trees}

\begin{definition}
    A tree $\mathfrak{T}$ is called \textit{normal} if for every $\mathfrak{p} \in \mathfrak{T}$ it holds that $I_\mathfrak{p}^* \subset I_\mathfrak{T}$.
\end{definition}
If $\mathfrak{T}$ is normal then $T_\mathfrak{T}^*f$ is supported in $I_\mathfrak{T}$ for all $f$, and if $\mathfrak{T}$ is any tree then $\mathfrak{T}' = \mathfrak{T} \setminus \bd(\mathfrak{T})$ is a normal tree.

We have the following improved estimate for $T_{\mathfrak{T}_1}T_{\mathfrak{T}_2}^*$ for separated normal trees $\mathfrak{T}_1$ and $\mathfrak{T}_2$:

\begin{lemma}[\cite{ZK2021}, Lem. 5.16]
\label{SepTreeBound}
 There exists $\varepsilon > 0$ such that for any two $\Delta$-separated normal trees $\mathfrak{T}_1$, $\mathfrak{T}_2$, and all $g_1, g_2 \in L^2(\mathbb{T}^\mathbf{d})$ it holds that 
 \begin{equation}
    \label{SepTreeBoundEqn}
    \left|\int_{\mathbb{T}^d} T_{\mathfrak{T}_1}^* g_1 \overline{ T_{\mathfrak{T}_2}^* g_2}\right| \lesssim  \Delta^{-\varepsilon} \prod_{j = 1,2} \||T^*_{\mathfrak{T}_j}g_j| + A Mg_j\|_{L^2(I_{\mathfrak{T}_1} \cap I_{\mathfrak{T}_2})}\,.
 \end{equation}
\end{lemma}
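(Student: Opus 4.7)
By symmetry I may assume $|I_{\mathfrak{T}_1}|\le |I_{\mathfrak{T}_2}|$; since the trees are normal, the integrand is supported in $I_{\mathfrak{T}_1}\cap I_{\mathfrak{T}_2}$, which (if nonempty) forces $I_{\mathfrak{T}_1}\subset I_{\mathfrak{T}_2}$ by nestedness of $D$-adic cubes. The plan is to extract the oscillation coming from the separated central polynomials. Setting $h_j(y)=e(-Q_{\mathfrak{T}_j}(y))\,T^*_{\mathfrak{T}_j}g_j(y)$, I rewrite the left hand side of \eqref{SepTreeBoundEqn} as
\[
\int_{I_{\mathfrak{T}_1}\cap I_{\mathfrak{T}_2}} e\bigl((Q_{\mathfrak{T}_1}-Q_{\mathfrak{T}_2})(y)\bigr)\,h_1(y)\overline{h_2(y)}\,dy,
\]
and observe that $\|Q_{\mathfrak{T}_1}-Q_{\mathfrak{T}_2}\|_{I_{\mathfrak{T}_1}}\gtrsim\Delta$ by applying the $\Delta$-separation condition to $\tp\mathfrak{T}_1\in\mathfrak{T}_1$.

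Next I would apply \cref{VanDerCorputA1} with phase $Q_{\mathfrak{T}_1}-Q_{\mathfrak{T}_2}$, dilation $r=D^{s(I_{\mathfrak{T}_1})}$ and $J$ a unit cube containing $\delta_{r^{-1}}(I_{\mathfrak{T}_1})$. This controls the above integral by $\sup_{|v|\lesssim \Delta^{-1/d}}\int|h_1\overline{h_2}(y)-h_1\overline{h_2}(y-\delta_r v)|\,dy$. Splitting the difference via the triangle inequality and applying Cauchy--Schwarz, and using the trivial estimate $\|h_k\|_{L^2}=\|T^*_{\mathfrak{T}_k}g_k\|_{L^2(I_{\mathfrak{T}_1}\cap I_{\mathfrak{T}_2})}$ for one factor, the task reduces to proving a Hölder regularity estimate of the form
\[
\bigl\|h_j(\cdot)-h_j(\cdot-\delta_r v)\bigr\|_{L^2(I_{\mathfrak{T}_1}\cap I_{\mathfrak{T}_2})}\lesssim |v|^{\eta}\,\bigl\||T^*_{\mathfrak{T}_j}g_j|+AMg_j\bigr\|_{L^2(I_{\mathfrak{T}_1}\cap I_{\mathfrak{T}_2})}
\]
for some fixed $\eta>0$; combining it with the van der Corput range $|v|\lesssim\Delta^{-1/d}$ then yields the claimed $\Delta^{-\varepsilon}$ with $\varepsilon=\eta/d$.

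To prove this Hölder estimate, I expand $h_j(y)-h_j(y-\delta_r v)$ as a sum over tiles $\mathfrak{p}\in\mathfrak{T}_j$. Each per-tile increment has two sources: differences of the kernel $K_{s(\mathfrak{p})}$, controlled via \eqref{KernelHoelderBound} with gain $\rho(\delta_r v)/D^{s(\mathfrak{p})}\leq|v|\,D^{s(I_{\mathfrak{T}_1})-s(\mathfrak{p})}$, and differences of the phase $Q_x-Q_{\mathfrak{T}_j}$ at the points $y$ and $y-\delta_rv$, controlled using the bound $\|Q_x-Q_{\mathfrak{T}_j}\|_{I_\mathfrak{p}}\lesssim 1$ that is valid on $E(\mathfrak{p})$ for tiles in $\mathfrak{T}_j$, together with \cref{PolynomialBound} to transfer it to the smaller ball of radius $\rho(\delta_r v)$. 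The kernel part sums to a geometric series in scales $s(\mathfrak{p})\ge s(I_{\mathfrak{T}_1})$ that is dominated pointwise by $|v|\,A\,Mg_j$. The main obstacle is to bound the remaining terms, where no derivative gain is available, by $|T^*_{\mathfrak{T}_j}g_j|+AMg_j$ rather than by applying the singular integral anew to $g_j$; I expect this to go via a Leibniz-style rewrite of the shifted tree operator as the unshifted $T^*_{\mathfrak{T}_j}g_j$ plus a maximal error, in the spirit of the phase-replacement argument in the proof of \cref{TrivialTreeBound}, so that only a fractional power $|v|^{\eta}$ of the full estimate survives.
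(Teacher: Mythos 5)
The basic skeleton you propose — pull out the oscillation $e((Q_{\mathfrak{T}_1}-Q_{\mathfrak{T}_2})(y))$, apply \cref{VanDerCorputA1}, then prove a Hölder estimate for the modulated tree operators — is the same as the paper's. But there is a genuine gap in the execution: you apply van der Corput globally at the top scale $r=D^{s(I_{\mathfrak{T}_1})}$, and you then claim the kernel contributions sum to a geometric series in scales $s(\mathfrak{p})\ge s(I_{\mathfrak{T}_1})$. That inequality is backwards: every $\mathfrak{p}\in\mathfrak{T}_1$ satisfies $4\mathfrak{p}<\tp\mathfrak{T}_1$ and hence $I_\mathfrak{p}\subsetneq I_{\mathfrak{T}_1}$, so $s(\mathfrak{p})<s(I_{\mathfrak{T}_1})$. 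The shift $\delta_r v$ with $|v|\sim\Delta^{-1/d}$ has anisotropic size $|v|\,D^{s(I_{\mathfrak{T}_1})}$, and the Hölder gain from \eqref{KernelHoelderBound} for a tile at scale $s(\mathfrak{p})$ is $\rho(\delta_r v)/D^{s(\mathfrak{p})}\sim|v|\,D^{s(I_{\mathfrak{T}_1})-s(\mathfrak{p})}$, which is $\gg 1$ for deep tiles; the resulting sum diverges. There is no way to upgrade a shift at the top scale into a uniform Hölder estimate for $e(-Q_{\mathfrak{T}_j})T^*_{\mathfrak{T}_j}g_j$ on all of $I_0$, because the tree contains tiles at arbitrarily small scales.

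This is precisely why the paper's proof does not apply van der Corput once on $I_{\mathfrak{T}_1}$. Instead it introduces the stopping set $\mathfrak{S}=\{\mathfrak{p}:\|Q\|_{I_\mathfrak{p}}\ge\Delta^{1-\eta}\}$, partitions $I_0$ into the cubes $\mathcal{J}=\mathcal{J}(\mathfrak{S})$, and uses a smooth partition of unity $\chi_J$ subordinate to $\mathcal{J}$. On a cube $J\in\mathcal{J}$, only tiles of scale $\ge s(J)$ in $\mathfrak{S}$ can touch $N(J)$, which makes the geometric series $\sum_{s\ge s(J)}D^{s(J)-s}$ converge and yields the Lipschitz bound \eqref{TreeHölder} with constant $\sim A/D^{s(J)}$; van der Corput is then applied locally on each $J$ with the local separation $\Delta_J\gtrsim\Delta^{1-\eta}$. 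Your proposal does anticipate the need to trade the full shift for a fractional power $|v|^\eta$, but it has no mechanism to realize this trade globally. Moreover you also omit the handling of $\mathfrak{T}_2\setminus\mathfrak{S}$ — the tiles of $\mathfrak{T}_2$ with small $\|Q\|_{I_\mathfrak{p}}$, which sit outside $I_0$ and are estimated separately via \cref{ProjTreeBound} and a thinness-of-collar argument. These two ingredients (the local stopping-time partition and the separate treatment of the unseparated part of $\mathfrak{T}_2$) are the core of the proof and are missing from the proposal.
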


\begin{proof}
    The estimate holds without the factor $\Delta^{-\varepsilon}$, so we can assume that $\Delta \gg 1$. If $I_{\mathfrak{T}_1}$ and $I_{\mathfrak{T}_2}$ are disjoint the inequality is trivial, so we can further assume that $I_0 \coloneqq I_{\mathfrak{T}_1} \subset I_{\mathfrak{T}_2}$ and $\mathfrak{T}_1 \neq \emptyset$. We denote $Q = Q_{\mathfrak{T}_1} - Q_{\mathfrak{T}_2}$ and fix some $0 < \eta < 1$ which will be specified later. Let
    \[
        \mathfrak{S} = \{\mathfrak{p} \in \mathfrak{T}_1 \cup \mathfrak{T}_2 \, : \, \|Q\|_{I_\mathfrak{p}} \geq \Delta^{1- \eta}\}\,.
    \]
    The $\Delta$-separation of $\mathfrak{T}_1$ and $\mathfrak{T}_2$ implies that every $\mathfrak{p} \in \mathfrak{T}_1$ satisfies
    \[
        \|Q\|_{I_\mathfrak{p}} \geq \|Q_\mathfrak{p} - Q_{\mathfrak{T}_2}\|_{I_\mathfrak{p}} - \|Q_\mathfrak{p} - Q_{\mathfrak{T}_1}\|_{I_\mathfrak{p}} \geq \Delta - 1 - 4\,.
    \]
    The same holds if $\mathfrak{p} \in \mathfrak{T}_2$ with $I_\mathfrak{p} \subset I_0$, by the same argument. If $\mathfrak{p} \in \mathfrak{T}_2 \cup \mathfrak{T}_1$ with $I_0 \subset I_\mathfrak{p}$, then in particular $I_{\mathfrak{p}_1} \subset I_{\mathfrak{p}}$ for each $\mathfrak{p}_1 \in \mathfrak{T}_1$. Since $\mathfrak{T}_1$ is not empty, we can pick one such $\mathfrak{p}_1$, and since $\|Q\|_I$ is increasing in $I$, we conclude that $\|Q\|_{I_\mathfrak{p}} \geq \|Q\|_{I_{\mathfrak{p}_1}} \geq \Delta - 5$ also for such $\mathfrak{p}$. Hence if $\Delta$ is sufficiently large, then 
    \[
        I_\mathfrak{p} \cap I_0 = \emptyset \ \text{for all $\mathfrak{p} \in (\mathfrak{T}_1 \cup \mathfrak{T}_2) \setminus \mathfrak{S}$.}
    \]
    Define $\mathcal{J} = \{J \in \mathcal{J}(\mathfrak{S}) \, : \, J \subset I_0\}$. Since $\mathfrak{T}_1 \subset \mathfrak{S}$, there is no $J \in \mathcal{J}(\mathfrak{S})$ with $I_0 \subset J$. Hence $\mathcal{J}$ is a partition of $I_0$. We claim that there exists a partition of unity $\chi_J$, $J \in \mathcal{J}$ such that:
    \begin{enumerate}[label=(\arabic*)]
        \item \label{Partiton1} Each $\chi_J$ is smooth on $I_0$,
        \item \label{Partiton2} $\mathbf{1}_{I_0} = \sum_{J \in \mathcal{J}} \chi_J$,
        \item \label{Partiton3} $|\chi_J(x) - \chi_J(y)| \lesssim D^{-s(J)} \rho(x-y)$ for all $J \in \mathcal{J}$ and $x,y \in I_0$,
        \item \label{Partiton4} $\chi_J$ is supported in $N(J)$, where $N(J)$ is the union of all $D$-adic cubes $J'$ of scale $s(J) - 1$ whose closure intersects the closure of $J$.
    \end{enumerate}
    The functions $\chi_J$ will not be continuous outside of $I_0$ but this will not cause any problems. 
    
    To show the existence of the partition of unity, note that two cubes $J, J' \in \mathcal{J}$ with $\overline{J} \cap \overline{J}' \neq \emptyset$ have scales differing by at most one. Indeed, assume that $s(J') \geq s(J) + 2$. There is a cube $I_\mathfrak{p}$, $\mathfrak{p} \in \mathfrak{S}$ with $I_\mathfrak{p} \subset 100D\hat J$, by the definition of $\mathcal{J}(\mathfrak{S})$. Since $s(J') > s(\hat J)$ and $\overline{J} \cap \overline{J}' \neq \emptyset$, it follows that $I_\mathfrak{p} \subset 100 D J'$, a contradiction.
    Using this fact, the construction of such a partition is standard, see for example \cite{Grafakos2014}, Appendix J. 
    
    Set $\Delta_J = \|Q\|_J$ for $J \in \mathcal{J}$. For every $J \in\mathcal{J}$, there exists some $\mathfrak{p} \in \mathfrak{S}$ with $100D\hat J \supset I_\mathfrak{p}$. Thus 
    \begin{equation}
        \label{DeltaJEqn}
        \Delta_J = \|Q\|_J \gtrsim \|Q\|_{100 D \hat J} \gtrsim \|Q\|_{I_\mathfrak{p}} \geq \Delta^{1-\eta}
    \end{equation}
    for all $J \in \mathcal{J}$.
    
    We now consider the contribution of $\mathfrak{S}$. The strategy of the argument is to prove Hölder continuity of $e(-Q_\mathfrak{T}) T_\mathfrak{T}^* f$ for trees $\mathfrak{T}$ and then use \cref{VanDerCorputA1}.
    For every tile $\mathfrak{p} \in \mathfrak{T}$ and all $y, y' \in \mathbb{T}^\mathbf{d}$, it holds that
    \begin{align*}
        &|e(-Q_\mathfrak{T}(y)) T_\mathfrak{p}^* f(y) - e(-Q_\mathfrak{T}(y')) T_\mathfrak{p}^* f(y')|\\
        =&\bigg| \int e(-Q_x(x) + Q_x(y) - Q_\mathfrak{T}(y)) \overline{K_{s(\mathfrak{p})}(x-y)} (\mathbf{1}_{E(\mathfrak{p})} f)(x)\\ 
        &-  e(-Q_x(x) + Q_x(y') - Q_\mathfrak{T}(y')) \overline{K_{s(\mathfrak{p})}(x-y')} (\mathbf{1}_{E(\mathfrak{p})} f)(x) \, \mathrm{d}x\bigg|\\
        \leq&\int_{E(\mathfrak{p})} |f(x)| |e(Q_x(y) - Q_x(y') - Q_\mathfrak{T}(y) + Q_\mathfrak{T}(y'))\overline{K_{s(\mathfrak{p})}(x-y)} - \overline{K_{s(\mathfrak{p})}(x-y')}| \, \mathrm{d}x\\
        \leq&\int_{E(\mathfrak{p})} |f(x)| |e(Q_x(y) - Q_x(y') - Q_\mathfrak{T}(y) + Q_\mathfrak{T}(y')) - 1||\overline{K_{s(\mathfrak{p})}(x-y)}|\, \mathrm{d}x\\
        &+ \int_{E(\mathfrak{p})} |f(x)| |\overline{K_{s(\mathfrak{p})}(x-y)} - \overline{K_{s(\mathfrak{p})}(x-y')} |\, \mathrm{d}x
    \end{align*}
    Let $y, y' \in I_\mathfrak{p}^*$. Then we can estimate using \cref{PolynomialBound}
    \begin{align*}
        |Q_x(y) - Q_x(y') - Q_\mathfrak{T}(y) + Q_\mathfrak{T}(y')|
        &\leq \|Q_x - Q_\mathfrak{T}\|_{B_\rho(y, \rho(y-y'))} \\
        &\lesssim \left(\frac{\rho(y-y')}{D^{s(\mathfrak{p})}}\right)^{\alpha_1} \|Q_x - Q_\mathfrak{T}\|_{B_\rho(y, D^{s(\mathfrak{p})})}\\
        &\lesssim \left(\frac{\rho(y-y')}{D^{s(\mathfrak{p})}}\right)^{\alpha_1} \|Q_x - Q_\mathfrak{T}\|_{I_\mathfrak{p}} \lesssim \left(\frac{\rho(y-y')}{D^{s(\mathfrak{p})}}\right)^{\alpha_1}\,.
    \end{align*}
    Applying this and \eqref{KernelHoelderBound}, we obtain for $y,y' \in I_\mathfrak{p}^*$
    \begin{align*}
    |e(-Q_\mathfrak{T}(y)) T_\mathfrak{p}^* f(y) - e(-Q_\mathfrak{T}(y')) T_\mathfrak{p}^* f(y')|\lesssim A \frac{\rho(y-y')}{D^{s(\mathfrak{p})}}  D^{-s(\mathfrak{p})|\alpha|} \int_{E(\mathfrak{p})} |f(x)| \, \mathrm{d}x\,.
    \end{align*}
    Since $T_\mathfrak{p}^*f$ is supported in $I_\mathfrak{p}^*$ and vanishes on the boundary this estimate holds for all $y, y' \in \mathbb{T}^\mathbf{d}$.
    
    Next, suppose that $J \in \mathcal{D}$ has the property that
    \begin{equation}
        \label{SepTreesProp}
        \mathfrak{p} \in \mathfrak{T}, I_\mathfrak{p}^* \cap N(J) \neq \emptyset \implies s(\mathfrak{p}) \geq s(J)\,.
    \end{equation}
    Then, for $y, y' \in N(J)$, we have that:
    \begin{align}
        &\quad|e(-Q_\mathfrak{T}(y)) T_\mathfrak{T}^* f(y) - e(-Q_\mathfrak{T}(y')) T_\mathfrak{T}^* f(y')|\nonumber\\
        &\leq \sum_{\mathfrak{p} \in \mathfrak{T}, I_\mathfrak{p}^* \cap N(J) \neq \emptyset} |e(-Q_\mathfrak{T}(y)) T_\mathfrak{p}^* f(y) - e(-Q_\mathfrak{T}(y')) T_\mathfrak{p}^* f(y')|\nonumber\\
        &\lesssim A \rho(y-y') \sum_{s \geq s(J)} D^{-s(|\alpha| + 1)}\sum_{\mathfrak{p} \in \mathfrak{T}, s(\mathfrak{p}) = s, I_\mathfrak{p}^* \cap N(J) \neq \emptyset}  \int_{E(\mathfrak{p})} |f(x)|\nonumber\\
        &\lesssim A \rho(y-y') \sum_{s \geq s(J)} D^{-s}\inf_{J} Mf\nonumber\\
        &\lesssim A \frac{\rho(y - y')}{D^{s(J)}}\inf_{J} Mf\,.  \label{TreeHölder}
    \end{align}
    It follows that
    \begin{align}
        \label{TreeUB}
        \sup_{y \in N(J)} |T_\mathfrak{T}^*f(y)| \leq \inf_{y \in \frac{1}{2}J} |T^*_\mathfrak{T} f(y)| + CA \inf_{y\in J} Mf(y)\,.
    \end{align}
    We claim that there exists an absolute constant $s_0$ such that: 
    \begin{equation}
        \mathfrak{p} \in \mathfrak{T}_2 \setminus \mathfrak{S}, J \in \mathcal{J}, I_\mathfrak{p}^* \cap J \neq \emptyset \implies s(\mathfrak{p}) \leq s(J) + s_0\,.
    \end{equation}
    Suppose that $s(\mathfrak{p}) > s(J) + s_0$. By the definition of $\mathcal{J}$, there exists some $\mathfrak{p}' \in \mathfrak{S}$ with $I_{\mathfrak{p}'} \subset 100 D\hat J$. We further note that since $|D^{s_0-1}\hat J| < |I_\mathfrak{p}|$ and $I_\mathfrak{p}^* \cap J \neq \emptyset$, it follows that $D^{s_0 - 1} \hat{J} \subset 10I_{\mathfrak{p}}$.
    Hence
    \begin{align*}
        \Delta^{1- \eta} &> \|Q\|_{I_\mathfrak{p}} \gtrsim \|Q\|_{10 I_\mathfrak{p}} \geq \|Q\|_{D^{s_0 - 1}\hat J} \gtrsim D^{\alpha_1 s_0} \|Q\|_{100 D \hat J}\\
        &\geq D^{\alpha_1 s_0} \|Q\|_{I_{\mathfrak{p}'}} \geq D^{\alpha_1 s_0} \Delta^{1 - \eta}\,.
    \end{align*}
    Choosing $s_0$ large enough, we arrive at a contradiction and the claim follows.
    
    Recall that if $\mathfrak{p}\in \mathfrak{T}_2 \setminus \mathfrak{S}$, then $I_\mathfrak{p} \cap I_0 = \emptyset$. Thus if $I_\mathfrak{p}^* \cap \frac{1}{2}J \neq \emptyset$ for some $J \in \mathcal{J}$, then $s(\mathfrak{p}) \geq s(J)$. Using this and the claim, we obtain
    \begin{align*}
        \sup_{y \in \frac{1}{2}J} |T_{\mathfrak{T}_2 \setminus\mathfrak{S}}^* g(y)| &\leq \sup_{y \in \frac{1}{2}J} \sum_{\mathfrak{p} \in \mathfrak{T}_2\setminus \mathfrak{S}, I_\mathfrak{p}^* \cap J \neq \emptyset} |T_\mathfrak{p}^*g(y)|\\
        &\leq \sup_{y \in \frac{1}{2}J} \sum_{s = s(J)}^{s(J) + s_0} \sum_{\mathfrak{p} \in \mathfrak{P}, s(\mathfrak{p}) = s} |T_\mathfrak{p}^* g(y)|\\
        &\lesssim A \sum_{s = s(J)}^{s(J) + s_0} \sup_{y \in \frac{1}{2}J} D^{-s|\alpha|} \sum_{\mathfrak{p} \in \mathfrak{P}, s(\mathfrak{p}) = s} \int_{B_\rho(y, D^{s}/2)} |g\mathbf{1}_{E(\mathfrak{p})}| \\
        &\lesssim A (s_0 +1) \inf_{y \in J} Mg(y)\,.
    \end{align*}
    Note now that the set $\mathfrak{T}_2 \cap \mathfrak{S}$ is a tree with the property \eqref{SepTreesProp}: If $I_\mathfrak{p}^* \cap N(J) \neq \emptyset $ and $s(\mathfrak{p}) < s(J)$, then $I_\mathfrak{p} \subset 100 DJ$, contradicting the definition of $\mathcal{J}$. Thus we can apply \eqref{TreeUB} and obtain for all $J \in \mathcal{J}$:
    \begin{align}
        \sup_{y \in N(J)} |T^*_{\mathfrak{T}_2 \cap \mathfrak{S}} g(y)| &\leq \inf_{y \in \frac{1}{2}J} |T_{\mathfrak{T}_2 \cap \mathfrak{S}}^* g(y)| + CA\inf_{y \in J} Mg(y)\nonumber\\
        &\leq \inf_{y \in \frac{1}{2}J} |T^*_{\mathfrak{T}_2} g(y)| + \sup_{y \in \frac{1}{2}J}|T^*_{\mathfrak{T}_2\setminus \mathfrak{S}} g(y)| + CA \inf_{y \in J} Mg(y)\nonumber\\
        &\leq \inf_{y \in \frac{1}{2}J} |T^*_{\mathfrak{T}_2} g(y)| + CA \inf_{y \in J} Mg(y)\,.\label{TreeUB2}
    \end{align}
    Set 
    \[
        h_J(y) = \chi_J(y)(e(-Q_\mathfrak{T_1}(y)) T_{\mathfrak{T}_1}^* g_1(y)) \cdot \overline{(e(-Q_\mathfrak{T_2}(y)) T_{\mathfrak{T}_2 \cap \mathfrak{S}}^* g_2(y))}\,.
    \]
    Since $\mathfrak{T}_1$ is contained in $\mathfrak{S}$ it satisfies \eqref{SepTreesProp}, hence we can also apply \eqref{TreeHölder} and\eqref{TreeUB} to $\mathfrak{T} = \mathfrak{T}_1$. Combining this with \eqref{TreeHölder} for $\mathfrak{T} = \mathfrak{S} \cap \mathfrak{T}_2$ and \eqref{TreeUB2} as well as the Lipschitz estimate for $\chi_J$ yields for $y, y' \in N(J)$
    \begin{equation}
        \label{eq h Lip}
        |h_J(y)- h_J(y')| \lesssim  \frac{\rho(y - y')}{D^{s(J)}} \prod_{j = 1,2} (\inf_{\frac{1}{2}J} |T_{\mathfrak{T}_j}^* g_j| + A \inf_J Mg_j)(\mathbf{1}_{N(J)}(y) + \mathbf{1}_{N(J)}(y'))\,.
    \end{equation}
    Since $T_{\mathfrak{T}_i}^* g_i$ is continuous on $\mathbb{T}^\mathbf{d}$ for $i=1,2$, $T_{\mathfrak{T}_1}^* g_1$ vanishes outside of $I_0$ and $\chi_J$ is continuous (and bounded) on $I_0$ and vanishes outside of $N(J)$, the functions $h_J$ are continuous on $\mathbb{T}^\mathbf{d}$ and supported in $N(J)$. Thus the Lipschitz estimate \eqref{eq h Lip} holds in fact for all $y, y' \in \mathbb{T}^\mathbf{d}$. 
    
    We can finally apply \cref{VanDerCorputA1} (viewing the sets $N(J)$ as subsets of $\R^\mathbf{d}$):
    \begin{align*}
        \left| \int_{\mathbb{T}^\mathbf{d}} T_{\mathfrak{T}_1}^* g_1 \overline{T_{\mathfrak{T}_2 \cap \mathfrak{S}}^* g_2 }\right| 
        &\leq \sum_{J \in \mathcal{J}} \left|\int_{\R^\mathbf{d}} e(Q(y)) h_J(y) \, \mathrm{d}y \right|\\
        &\leq \sum_{J \in \mathcal{J}} \Delta_J^{-1/(d\alpha_\mathbf{d})} |J|\prod_{j = 1,2}(\inf_{\frac{1}{2}J} |T_{\mathfrak{T}_j}^* g_j| + A\inf_J Mg_j)\\
        &\leq  \Delta^{-(1 - \eta)/(d\alpha_\mathbf{d})} \prod_{j = 1,2}\| |T_{\mathfrak{T}_j}^* g_j| + A Mg_j\|_{L^2(I_0)}\,.
    \end{align*}
    The last step uses \eqref{DeltaJEqn}.
    
    It remains to estimate the contribution of $\mathfrak{T}_2 \setminus \mathfrak{S}$. Define $\mathcal{J}' = \{J \in \mathcal{J}(\mathfrak{T}_1) \, : \, J \subset I_0\}$. We claim that for some $s_\Delta$ with $D^{s_\Delta}\sim \Delta^{\eta/(d\alpha_\mathbf{d})}$ it holds that
    \begin{equation}
        \label{SepTreeClaim}
        \mathfrak{p} \in \mathfrak{T}_2 \setminus \mathfrak{S}, J \in \mathcal{J}', I_\mathfrak{p}^* \cap J \neq \emptyset \implies s(\mathfrak{p}) \leq s(J) - s_\Delta\,.
    \end{equation}
    Indeed, assume that $s(\mathfrak{p}) > s(J) - s_\Delta$. The cube $D^{s_\Delta}I_\mathfrak{p}$ is larger than $J$ and intersects it. Thus, if $C$ is choosen large enough, then $100D\hat J \subset CD^{s_\Delta}I_\mathfrak{p}$. On the other hand, there exists by the definition of $\mathcal{J}'$ some $\mathfrak{p}' \in \mathfrak{T}_1$ with $I_{\mathfrak{p}'} \subset 100 D \hat J$. This gives, by the definition of $\mathfrak{S}$ and \cref{PolynomialBound}:
    \begin{align*}
        \Delta^{1-\eta} > \|Q\|_{I_\mathfrak{p}} \gtrsim D^{-\alpha_\mathbf{d} ds_\Delta} \|Q\|_{CD^{s_\Delta}I_\mathfrak{p}} \gtrsim D^{-\alpha_\mathbf{d} ds_\Delta} \|Q\|_{I_{\mathfrak{p}'}} \geq D^{-\alpha_\mathbf{d} ds_\Delta}(\Delta - 2)\,.
    \end{align*}
    If the constant $c$ in $D^{s_\Delta} \leq c\Delta^{\eta/(d\alpha_\mathbf{d})}$ is chosen sufficiently small, this is a contradiction.
    
    Note that $\mathfrak{T}_2 \setminus \mathfrak{S}$ is still a tree, since $\mathfrak{S}$ is an up set.
    Thus, by \cref{ProjTreeBound} it holds that
    \begin{align*}
        \quad\left| \int T_{\mathfrak{T}_1}^* g_1 \overline{T_{\mathfrak{T}_2 \setminus \mathfrak{S}}^* g_2} \right|\lesssim A \|g_1\mathbf{1}_{I_0}\|_2 \|P_{\mathcal{J}'}|T_{\mathfrak{T}_2 \setminus \mathfrak{S}}^* g_2|\|_2\,.
    \end{align*}
    Using \eqref{SepTreeClaim} we have
    \begin{align*}
        \|P_{\mathcal{J}'}|T_{\mathfrak{T}_2 \setminus \mathfrak{S}}^* g_2|\|_2
        &\leq \sum_{s \geq s_\Delta} \left(\sum_{J \in \mathcal{J}'} |J|^{-1} \left|\int_J\sum_{\mathfrak{p} \in \mathfrak{T}_2 \setminus \mathfrak{S}: s(\mathfrak{p}) = s(J) - s, I_\mathfrak{p}^* \cap J \neq \emptyset} T_\mathfrak{p}^* g_2\right|^2\right)^{1/2}\\
        &\leq A \sum_{s \geq s_\Delta} \left(\sum_{J \in \mathcal{J}'} |J|^{-1} \left|\int_J Mg_2 \sum_{I \in \mathcal{D}_{s(J) - s}, I\cap I_0 = \emptyset, I^* \cap J \neq \emptyset}  \mathbf{1}_{I^*}\right|^2\right)^{1/2}\\
        &\leq A \sum_{s \geq s_\Delta} \left(\sum_{J \in \mathcal{J}'}  \int_J (Mg_2)^2  |J|^{-1}\int_J\left(\sum_{I \in \mathcal{D}_{s(J) - s}, I\cap I_0 = \emptyset, I^* \cap J \neq \emptyset}  \mathbf{1}_{I^*}\right)^2 \right)^{1/2}\,.
    \end{align*}
    The cubes 
    \[
        \{I^* \, : \, I \in \mathcal{D}_{s(J) - s}, I\cap I_0 = \emptyset, I^* \cap J \neq \emptyset\}
    \]
    have bounded overlap and cover a set of size $\lesssim |J| D^{-\alpha_1s}$. Thus we can further estimate the $s$-sum by
    \begin{align*}
         \sum_{s \geq s_\Delta} D^{-\alpha_1 s/2} \|\mathbf{1}_{I_0}Mg_2\|_2 \lesssim D^{-\alpha_1 s_\Delta/2} \|\mathbf{1}_{I_0}Mg_2\|_2 \lesssim \Delta^{-\alpha_1\eta/(2d\alpha_\mathbf{d})} \|\mathbf{1}_{I_0}Mg_2\|_2\,.
    \end{align*}
    This completes the estimate of the contribution of $\mathfrak{T}_2 \setminus \mathfrak{S}$. Finally, set $\eta = 2/(2 + \alpha_1)$ to obtain \eqref{SepTreeBoundEqn} with $\varepsilon = \alpha_1/((2 + \alpha_1)\alpha_\mathbf{d}d)$.
\end{proof}

\subsection{Rows of Trees}

\begin{definition}
    A \textit{row} is a union of normal trees with pairwise disjoint spatial cubes.
\end{definition}

\Cref{SepTreeBound} implies an estimate for $T_{\mathfrak{R}_1}T_{\mathfrak{R}_2}^*$ for separated rows $\mathfrak{R}_1$, $\mathfrak{R}_2$ with decay in the separation:

\begin{lemma}[\cite{ZK2021}, Lem. 5.26]
    \label{SepRowBound}
    Let $\mathfrak{R}_1$, $\mathfrak{R}_2$ be rows such that the trees in $\mathfrak{R}_1$ are $\Delta$-separated from the trees in $\mathfrak{R}_2$. Then for any $g_1$, $g_2 \in L^2(\mathbb{T}^\mathbf{d})$, it holds that
    \[
        \left| \int_{\mathbb{T}^\mathbf{d}} T_{\mathfrak{R}_1}^* g_1 \overline{T_{\mathfrak{R}_2}^* g_2} \right| \lesssim A^2 \Delta^{-\varepsilon} \|g_1\|_2 \|g_2\|_2\,.
    \]
\end{lemma}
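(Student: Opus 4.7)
The plan is to expand $T_{\mathfrak{R}_i}^* = \sum_{\mathfrak{T}_i \in \mathfrak{R}_i} T_{\mathfrak{T}_i}^*$ and apply \cref{SepTreeBound} to every pair $(\mathfrak{T}_1, \mathfrak{T}_2) \in \mathfrak{R}_1 \times \mathfrak{R}_2$, then sum the resulting bounds using Cauchy--Schwarz. Writing $F_j^{\mathfrak{T}} := |T_\mathfrak{T}^* g_j| + A\, M g_j$, \cref{SepTreeBound} yields
\[
    \left|\int T_{\mathfrak{R}_1}^* g_1 \overline{T_{\mathfrak{R}_2}^* g_2}\right|
    \lesssim \Delta^{-\varepsilon} \sum_{\mathfrak{T}_1, \mathfrak{T}_2} \|F_1^{\mathfrak{T}_1}\|_{L^2(I_{\mathfrak{T}_1} \cap I_{\mathfrak{T}_2})} \|F_2^{\mathfrak{T}_2}\|_{L^2(I_{\mathfrak{T}_1} \cap I_{\mathfrak{T}_2})}.
\]
Cauchy--Schwarz in the double sum splits this into the product of $\bigl(\sum \|F_1^{\mathfrak{T}_1}\|_{L^2(I_{\mathfrak{T}_1} \cap I_{\mathfrak{T}_2})}^2\bigr)^{1/2}$ and the analogous expression for $F_2$.

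To bound the first factor I would exploit the two disjointness features of a row. Since trees in $\mathfrak{R}_2$ have pairwise disjoint spatial cubes, $\sum_{\mathfrak{T}_2} \mathbf{1}_{I_{\mathfrak{T}_2}} \leq 1$, so the inner sum telescopes:
\[
    \sum_{\mathfrak{T}_2} \|F_1^{\mathfrak{T}_1}\|_{L^2(I_{\mathfrak{T}_1} \cap I_{\mathfrak{T}_2})}^2 \leq \|F_1^{\mathfrak{T}_1}\|_{L^2(I_{\mathfrak{T}_1})}^2 \lesssim \|T_{\mathfrak{T}_1}^* g_1\|_2^2 + A^2 \|Mg_1\|_{L^2(I_{\mathfrak{T}_1})}^2,
\]
where in the first term I use normality of $\mathfrak{T}_1$ to drop the restriction to $I_{\mathfrak{T}_1}$. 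Now I sum over $\mathfrak{T}_1 \in \mathfrak{R}_1$. Normality of $\mathfrak{T}_1$ also implies $T_{\mathfrak{T}_1}^* g_1 = T_{\mathfrak{T}_1}^*(\mathbf{1}_{I_{\mathfrak{T}_1}} g_1)$, so \cref{TrivialTreeBound} together with $\|M^K\|_{2\to 2} + \|R^K\|_{2 \to 2} \lesssim A$ gives $\|T_{\mathfrak{T}_1}^* g_1\|_2 \lesssim A \|g_1 \mathbf{1}_{I_{\mathfrak{T}_1}}\|_2$. Summing over the pairwise disjoint spatial cubes of $\mathfrak{R}_1$ and using $L^2$-boundedness of $M$ yields
\[
    \sum_{\mathfrak{T}_1, \mathfrak{T}_2} \|F_1^{\mathfrak{T}_1}\|_{L^2(I_{\mathfrak{T}_1} \cap I_{\mathfrak{T}_2})}^2 \lesssim A^2 \sum_{\mathfrak{T}_1 \in \mathfrak{R}_1} \|g_1 \mathbf{1}_{I_{\mathfrak{T}_1}}\|_2^2 + A^2 \|Mg_1\|_2^2 \lesssim A^2 \|g_1\|_2^2,
\]
and symmetrically the second factor is $\lesssim A^2 \|g_2\|_2^2$. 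Combining the two factors with the prefactor $\Delta^{-\varepsilon}$ produces the claimed bound $A^2 \Delta^{-\varepsilon}\|g_1\|_2 \|g_2\|_2$.

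There is no serious obstacle to this argument; it is a standard almost-orthogonality synthesis. The only point that requires a little care is being precise about where normality is used: once to localize $T_{\mathfrak{T}}^* g$ to $I_{\mathfrak{T}}$ (so that the $L^2$ norm on $I_{\mathfrak{T}_1}$ equals the full norm) and once to let us estimate $\|T_\mathfrak{T}^* g\|_2$ by applying the tree $L^2$ bound to the localized function $g\mathbf{1}_{I_\mathfrak{T}}$, which is what lets the sum over $\mathfrak{T} \in \mathfrak{R}$ telescope into a single $\|g\|_2^2$.
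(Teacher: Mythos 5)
Your proof is correct and follows essentially the same route as the paper: expand $T^*_{\mathfrak{R}_i}$ into trees, apply \cref{SepTreeBound} to every pair, Cauchy--Schwarz the double sum, and use the pairwise disjointness of spatial cubes in a row to collapse each factor. The only cosmetic difference is that the paper first replaces $g_j$ by $\mathbf{1}_{I_{\mathfrak{T}_j}}g_j$ (legitimate by normality) before invoking \cref{SepTreeBound}, so the maximal function term comes out pre-localized, whereas you keep $Mg_j$ global and use disjointness of the $I_{\mathfrak{T}_1}$ to sum $\|Mg_1\|^2_{L^2(I_{\mathfrak{T}_1})}$ to $\|Mg_1\|_2^2$ at the end; both are fine.
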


\begin{proof}
    Denote by $S_\mathfrak{T} $ the operator $|T_\mathfrak{T}^*| + AM$. These operators are bounded on $L^2$ with norm $\lesssim A$ for all trees $\mathfrak{T}$, by \cref{ProjTreeBound}. Using this and \cref{SepTreeBound}, we obtain:
    \begin{align*}
        \left| \int T^*_{\mathfrak{R}_1}g_1 \overline{T_{\mathfrak{R}_2}^* g_2}\right|
        &\leq \sum_{\mathfrak{T}_1 \in \mathfrak{R}_1, \, \mathfrak{T}_2 \in \mathfrak{R}_2} \left| \int T^*_{\mathfrak{T}_1}g_1 \overline{T_{\mathfrak{T}_2}^* g_2}\right|\\
        &\leq \Delta^{-\varepsilon} \sum_{\mathfrak{T}_1 \in \mathfrak{R}_1, \, \mathfrak{T}_2 \in \mathfrak{R}_2} \|S_{\mathfrak{T}_1} \mathbf{1}_{I_{\mathfrak{T}_1}}g_1\|_{L^2(I_{\mathfrak{T}_1} \cap I_{\mathfrak{T}_2})}\|S_{\mathfrak{T}_2} \mathbf{1}_{I_{\mathfrak{T}_2}}g_2\|_{L^2(I_{\mathfrak{T}_1} \cap I_{\mathfrak{T}_2})}\\
        &\leq \Delta^{-\varepsilon} \prod_{j=1,2} \left(\sum_{\mathfrak{T}_1 \in \mathfrak{R}_1, \, \mathfrak{T}_2 \in \mathfrak{R}_2} \|S_{\mathfrak{T}_j} \mathbf{1}_{I_{\mathfrak{T}_j}}g_j\|_{L^2(I_{\mathfrak{T}_1} \cap I_{\mathfrak{T}_2})}^2\right)^{1/2}\\
        &\leq \Delta^{-\varepsilon} \prod_{j=1,2} \left(\sum_{\mathfrak{T}_j \in \mathfrak{R}_j} \|S_{\mathfrak{T}_j} \mathbf{1}_{I_{\mathfrak{T}_j}}g_j\|_{L^2(I_{\mathfrak{T}_j})}^2\right)^{1/2}\\
        &\lesssim A^2 \Delta^{-\varepsilon} \left(\sum_{\mathfrak{T}_1 \in \mathfrak{R}_1} \|\mathbf{1}_{I_{\mathfrak{T}_1}}g_1\|_2^2\right)^{1/2}\left(\sum_{\mathfrak{T}_2 \in \mathfrak{R}_2} \|\mathbf{1}_{I_{\mathfrak{T}_2}}g_1\|_2^2\right)^{1/2}\\
        &\leq A^2 \Delta^{-\varepsilon} \|g_1\|_2 \|g_2\|_2\,. 
    \end{align*}
    Here the third step follows from the Cauchy-Schwarz inequality and the fourth and last step follow from the disjointness of the spatial cubes $I_\mathfrak{T}$ for $\mathfrak{T} \in \mathfrak{R}_j$.
\end{proof}

\subsection{Forests}
Finally, we estimate the contribution of forests of normal trees using an orthogonality argument. All trees can be assumed to be normal, since we already estimated the contribution of boundary parts of trees in \cref{BoundaryTreesSection}. We fix $n$ and drop it from the notation.
\begin{lemma}[\cite{ZK2021}, Prop. 5.27]
    \label{ForestBound}
    Let $\mathfrak{N}_{j,l} = \mathfrak{T}_{j,l} \setminus \bd(\mathfrak{T}_{j,l})$ and set $\mathfrak{F}'_{j} = \cup_l \mathfrak{N}_{j,l}$. If $n > n_0$, then it holds that
    \[
        \|T_{\mathfrak{F}'_{j}}\|_{2\to 2} \lesssim A 2^{-n/2}
    \]
    and if $n = n_0 \geq \frac{2}{\log(2)}\log(A/(\|R^K\|_{2 \to 2} + \|M^K\|_{2 \to 2}))$, then it holds that
    \[
        \|T_{\mathfrak{F}'_{j}}\|_{2\to 2} \lesssim \|R^K\|_{2 \to 2} + \|M^K\|_{2 \to 2}\,.
    \]
\end{lemma}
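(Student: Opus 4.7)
My plan is to decompose the forest $\mathfrak{F}'_j$ into rows of normal trees with pairwise disjoint spatial cubes, bound each row individually, and then assemble the pieces via an almost-orthogonality (Cotlar--Stein) argument. Since the spatial cubes $I_{\mathfrak{T}_{j,l}}$ are $D$-adic and therefore satisfy the nested-or-disjoint property, their intersection graph is a comparability graph whose chromatic number coincides with its clique number. That clique number is controlled by the overlap bound \eqref{ForestEqn}, so a greedy packing yields rows $\mathfrak{R}_1,\ldots,\mathfrak{R}_K$ with $K \lesssim 2^n \log(n+1)\log(\lambda)$ and $\mathfrak{F}'_j = \bigcup_{i=1}^K \mathfrak{R}_i$.

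For each row $\mathfrak{R}_i$, normality of the trees implies that each $T_\mathfrak{T}f$ is supported in $I_\mathfrak{T}$ and depends only on $f|_{I_\mathfrak{T}}$. Using the disjointness of the $I_\mathfrak{T}$, $\mathfrak{T}\in \mathfrak{R}_i$, and applying \cref{TrivialTreeBound} tree by tree, I get
\begin{equation*}
\|T_{\mathfrak{R}_i} f\|_2^2 = \sum_{\mathfrak{T}\in \mathfrak{R}_i} \|T_\mathfrak{T}(f\mathbf{1}_{I_\mathfrak{T}})\|_2^2 \lesssim (\|M^K\|_{2\to 2} + \|R^K\|_{2\to 2})^2 \|f\|_2^2.
\end{equation*}
In the range $n > n_0$ I would instead use \cref{SumTreeBound} together with the density bound $\dens(\mathfrak{T}_{j,l}) \lesssim 2^{-n}$ to improve this to $\|T_{\mathfrak{R}_i}\|_{2\to 2} \lesssim A\, 2^{-n/2}$. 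For the off-diagonal contributions, the $2^{\gamma n}$-separation of the trees in different rows combined with \cref{SepRowBound} gives
\begin{equation*}
\|T_{\mathfrak{R}_i} T_{\mathfrak{R}_k}^*\|_{2\to 2} \lesssim A^2\, 2^{-\gamma n \varepsilon}, \qquad i \neq k,
\end{equation*}
and the corresponding bound for $T_{\mathfrak{R}_i}^*T_{\mathfrak{R}_k}$ follows by the symmetric version of the argument.

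Feeding the diagonal and off-diagonal bounds into the Cotlar--Stein lemma then produces an estimate of the form
\begin{equation*}
\|T_{\mathfrak{F}'_j}\|_{2\to 2} \lesssim \sup_i \|T_{\mathfrak{R}_i}\|_{2\to 2} + K \cdot A\cdot 2^{-\gamma n \varepsilon / 2}.
\end{equation*}
The main obstacle is calibrating $\gamma$ so that the off-diagonal tail is dominated by the diagonal in both regimes. The requirement is that $2^n \log(n+1)\log(\lambda)\cdot 2^{-\gamma n\varepsilon/2}$ be $\lesssim 2^{-n/2}$, which forces $\gamma \geq 3/\varepsilon + O(\log_2(\log(n+1)\log\lambda)/n)$; taking $\gamma = C \log\log\lambda$ with $C$ a sufficiently large constant (depending on $\varepsilon$) works for every $n \geq 1$. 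For $n > n_0$ this yields the claimed $\|T_{\mathfrak{F}'_j}\|_{2\to 2} \lesssim A\,2^{-n/2}$ directly. For $n=n_0$, the hypothesis $n_0 \geq (2/\log 2)\log(A/(\|M^K\|_{2\to 2}+\|R^K\|_{2\to 2}))$ translates into $A\cdot 2^{-n_0/2} \leq \|M^K\|_{2\to 2}+\|R^K\|_{2\to 2}$, so the same inequality absorbs the extra factor of $A$ in the off-diagonal sum into the diagonal row bound and delivers the desired $\|M^K\|_{2\to 2}+\|R^K\|_{2\to 2}$ estimate.
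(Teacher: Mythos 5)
Your proposal is essentially correct and takes the same overall route as the paper: split the forest into rows with pairwise-disjoint tops, bound each row via the diagonal tree estimates (\cref{TrivialTreeBound} or \cref{SumTreeBound}), bound the cross terms via \cref{SepRowBound} and the $2^{\gamma n}$-separation, and choose $\gamma\sim\log\log\lambda$ so the off-diagonal tail is absorbed. The small organizational differences are worth noting. First, the paper's row extraction is a greedy ``peel off the maximal cubes'' procedure rather than a graph coloring; both produce the same bound on the number of rows. Second, the paper does not invoke Cotlar--Stein as a black box: since tiles in distinct $2^{\gamma n}$-separated trees are never comparable, the sets $E(\mathfrak{p})$ in distinct rows are disjoint, hence $T_{\mathfrak{R}_m}^*T_{\mathfrak{R}_{m'}}=0$ for $m\neq m'$ and the subspaces $\overline{\im T_{\mathfrak{R}_m}}$ are orthogonal. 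The paper then expands $\|\sum_m T_{\mathfrak{R}_m}^*\pi_m f\|_2^2$ directly, exploiting that the orthogonal projections $\pi_m$ satisfy $\sum_m\|\pi_m f\|_2^2\le\|f\|_2^2$; this avoids the geometric-mean bookkeeping of Cotlar--Stein while delivering the same numerology.

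The one place your write-up is off target is the claim that ``$T_{\mathfrak{R}_i}^*T_{\mathfrak{R}_k}$ follows by the symmetric version of the argument.'' There is no symmetric version of \cref{SepRowBound}: that lemma bounds $\int T_{\mathfrak{R}_1}^*g_1\,\overline{T_{\mathfrak{R}_2}^*g_2}$, i.e.\ $\|T_{\mathfrak{R}_1}T_{\mathfrak{R}_2}^*\|$, and its proof relies on $T_{\mathfrak{T}}^*g$ being spatially localized to $I_{\mathfrak{T}}$ for normal trees. The correct, and in fact much stronger, observation is the one above — $T_{\mathfrak{R}_i}^*T_{\mathfrak{R}_k}$ is \emph{identically zero} because the images of $T_{\mathfrak{R}_i}$ and $T_{\mathfrak{R}_k}$ have disjoint supports. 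Since this makes the relevant Cotlar--Stein factor trivial, your final conclusion is unaffected, but the stated justification would not survive scrutiny as written.
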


\begin{proof}
    We subdivide the forest $\mathfrak{F}'_{j}$ into rows $\mathfrak{R}_m$ using the following procedure: If $\mathfrak{R}_1, \dots, \mathfrak{R}_i$ have been selected, $\mathfrak{R}_{i+1}$ is a maximal union of trees $\mathfrak{N}_{j,l}$ such that the cubes $I_{\mathfrak{N}_{j,l}}$ are maximal among the spatial cubes of not yet selected trees. Since the cubes $I_{\mathfrak{N}_{j,l}}$ have overlap bounded by $2^n \log (n+1) \log(\lambda)$ we obtain $\lesssim 2^{2n} \log(\lambda)$ rows. By \cref{TrivialTreeBound} and \cref{SumTreeBound}, it holds that
    \[
        \|T_{\mathfrak{N}_{j,l}} \|_{2 \to 2} \lesssim  
        \begin{cases}
        A 2^{-n/2} &\text{if $n > n_0$}\\
        \|R^K\|_{2 \to 2} + \|M^K\|_{2 \to 2} &\text{if $n = n_0$}
        \end{cases} 
        \eqqcolon C(n)\,.
    \]
    We have that $T_{\mathfrak{N}_{j,l}}^* f = T_{\mathfrak{N}_{j,l}}^* (\mathbf{1}_{I_{\mathfrak{N}_{j,l}}}f)$. Hence, by the disjointness of the spatial cubes in a row:
    \begin{align*}
        \|T_{\mathfrak{R}_m}^* f\|^2_2 = \sum_l \|T_{\mathfrak{N}_{j,l}}^* (\mathbf{1}_{I_{\mathfrak{N}_{j,l}}}f)\|_2^2\lesssim C(n)^2 \sum_l \|\mathbf{1}_{I_{\mathfrak{N}_{j,l}}}f\|_2^2\leq C(n)^2 \|f\|_2^2\,.
    \end{align*}
    It follows that for all $m$ we have $\|T_{\mathfrak{R}_m}\|_{2\to 2} \lesssim C(n)$.
    Note that the sets $E(\mathfrak{p})$ for $\mathfrak{p}$ in different rows are disjoint, since the rows are separated.
    This implies that $T^*_{\mathfrak{R}_m} T_{\mathfrak{R}_{m'}} = 0$ for $m \neq m'$. Thus the subspaces $\overline{\im T_{\mathfrak{R}_m}}$ for distinct $m$ are orthogonal. Denote by $\pi_m$ the orthogonal projection onto $\overline{\im T_{\mathfrak{R}_m}}$, so that $T^*_{\mathfrak{R}_m} = T^*_{\mathfrak{R}_m} \circ \pi_m$ for each $m$. Then
    \begin{align*}
        \|\sum_m T^*_{\mathfrak{R}_m} f\|_2^2  &=  \|\sum_m T^*_{\mathfrak{R}_m} \pi_m f\|_2^2 \\
        &=  \sum_m \|T^*_{\mathfrak{R}_m} \pi_m f\|_2^2 + \sum_{m \neq m'} \int \pi_m (f) T_{\mathfrak{R}_m} T^*_{\mathfrak{R}_{m'}} \pi_{m'}(f) \, \mathrm{d}x\\
        &\lesssim  C(n)^2 \sum_m \|\pi_m f\|_2^2 + A^2 2^{-\gamma n \varepsilon}\sum_{m \neq m'} \|\pi_m f\|_2 \|\pi_{m'} f\|_2 \\
        &\lesssim C(n)^2 \|f\|_2^2 + A^2 2^{2n}\log(\lambda) 2^{-\gamma n \varepsilon} \|f\|_2^2\,.
    \end{align*}
    Here we used the $2^{\gamma n}$-separation of the rows $\mathfrak{R}_m$ and \cref{SepRowBound} to estimate the $m \neq m'$ terms. The last inequality holds since there are $\lesssim 2^{2n} \log(\lambda)$ rows. Choosing $\gamma = \log \log(\lambda)/(\varepsilon \log 2) + 3/\varepsilon$, this is smaller than
    \[
        (C(n)^2 + A^22^{-n}) \|f\|_{2}^2\,.
    \]
    In both cases $n = n_0$ and $n > n_0$, this implies the claimed estimates.
\end{proof}

\section{Proof of the Main Theorem}
\label{PMain}
We now prove \cref{MainThmWeakBound1} using the results from Sections 2 to 4.

\begin{proof}[Proof of \cref{MainThmWeakBound1}]
    Fix $f \in L^2(\mathbb{T}^\mathbf{d})$ with $\|f\|_2 \leq 1$ and $\lambda > 10e$.
    By the reductions in Section 2, it suffices to estimate $T_\mathfrak{P}f$. We do this by showing bounds for the distribution function $|\{|T_\mathfrak{P}f(x)|>\lambda\}|$. 
    
    By \cref{FirstExceptionalSet}, there exists an exceptional set $E_1$ with $|E_1| \lesssim \lambda^{-2}$ and
    \begin{equation}
    \label{deceqn}
       \mathbf{1}_{\mathbb{T}^\mathbf{d} \setminus E_1} T_\mathfrak{P} = \mathbf{1}_{\mathbb{T}^\mathbf{d} \setminus E_1} \sum_{n \geq n_0} \left( \sum_{j = 1}^{C(\gamma n^2 + \gamma n \log\log \lambda)} T_{\mathfrak{A}_{n,j}} + \sum_{j = 1}^{C(n + \log\log\lambda)} T_{\mathfrak{F}_{n,j}} \right)\,.
    \end{equation}
    \Cref{TrivialAntichainBound} yields that $\|T_{\mathfrak{A}_{n,j}}\|_{2 \to 2} \lesssim \|M^K\|$ for $n = n_0$ and \cref{AntichainBound} implies the estimate $\|T_{\mathfrak{A}_{n,j}}\|_{2 \to 2} \lesssim  A 2^{-n\varepsilon}$ for $n > n_0$. We choose $n_0$ as the closest integer to 
    \[
        100\varepsilon^{-1} \log(e + \frac{A}{\|M^K\| + \|R^K\|})\,.
    \]
    It holds that $\|R^K\| + \|M^K\| \lesssim A$, by \cref{nontangential} and since $M^K \leq A M$.
    Using this and $\gamma \sim \log \log \lambda$, we find
    \[
        \left\|\sum_{n \geq n_0} \sum_{j = 1}^{C(\gamma n^2 + \gamma n \log\log\lambda)} T_{\mathfrak{A}_{n,j}}\right\|_{2\to2} \lesssim (\|M^K\|+ \|R^K\|) \log^2(e + \frac{A}{\|M^K\| + \|R^K\|}) (\log \log \lambda)^2\,.
    \]
    Next, we split the second summand in \eqref{deceqn} into forests of normal trees and boundary parts of trees:
    \[
        \sum_{j = 1}^{C(n+\log\log \lambda)} T_{\mathfrak{F}_{n,j}} = \sum_{j = 1}^{C(n+\log\log \lambda)} T_{\mathfrak{F}_{n,j}'}+ \sum_{j=1}^{C(n+\log\log \lambda)} T_{\cup_l\bd\mathfrak{T}_{n,j,l}}\,.
    \]
    For the boundary parts of trees we apply \cref{BoundaryTrees}. We obtain exceptional sets $E_2(n,j)$ such that $|E_2(n,j)| \lesssim 2^{-n} \lambda^{-2}$ and 
    \[
        \|\mathbf{1}_{\mathbb{T}^\mathbf{d} \setminus E_2(n,j)}T_{\cup_l\bd\mathfrak{T}_{n,j,l}}\|_{2 \to 2} \lesssim 
        \begin{cases}
            n \log(\lambda) A  2^{-n\varepsilon} & \text{if $n > n_0$}\\
            n \log(\lambda) \|M^K\| & \text{if $n = n_0$}
        \end{cases}\,.
    \]
    Then the set $E_2 \coloneqq \cup_{n,j} E_2(n,j)$ satisfies $|E_2| \lesssim \lambda^{-2} \log\log\lambda$ and
    \[
        \left\| \mathbf{1}_{\mathbb{T}^\mathbf{d} \setminus E_2}\sum_{n \geq n_0} \sum_{j = 1}^{C(n + \log\log\lambda)} T_{\cup_l\bd\mathfrak{T}_{n,j,l}}\right\|_{2\to2} \lesssim (\|M^K\| + \|R^K\|) \log^2(e + \frac{A}{\|M^K\| + \|R^K\|}) \log^2 \lambda \,.
    \]
    By \cref{ForestBound} it holds that
    \[
        \|T_{\mathfrak{F}_{n,j}'}\|_{2 \to 2} \lesssim 
        \begin{cases}
            A 2^{-n/2} &\text{if $n > n_0$}\\
            \|R^K\| + \|M^K\| &\text{if $n = n_0$}
        \end{cases}\,.
    \]
    Summing these estimates yields
    \begin{align*}
        \left\|\sum_{n \geq n_0} \sum_{j = 1}^{C(n+\log\log\lambda)} T_{\mathfrak{F}'_{n,j}}\right\|_{2 \to 2}&\lesssim (n_0 + \log\log\lambda)(\|R^K\| + \|M^K\|+ A 2^{-n_0/2})\\
        &\lesssim (\log \log \lambda  + \log(e + \frac{A}{\|M^K\| + \|R^K\|})) (\|M^K\| + \|R^K\|)\,.
    \end{align*}
    Putting everything together, we obtain with Tschebyscheff's inequality
    \begin{align*}
        |\{x \, : \, |T_\mathfrak{P}f(x)| > \lambda\}|\lesssim \frac{\log^4 \lambda}{\lambda^2}(1 + (\|M^K\| + \|R^K\|)^2 \log^4(e + \frac{A}{\|M^K\| + \|R^K\|}))\,.
    \end{align*}
    By multiplying the kernel $K$ with a constant, we can assume without loss of generality that $(\|R^K\|_{2 \to2} + \|M^K\|_{2 \to 2}) \log^2(e  +\frac{A}{\|M^K\| + \|R^K\|}) = 1$. Integrating the above estimate then yields $\|T^\mathfrak{P}f\|_p \lesssim_p 1$ for $p < 2$, as required.
\end{proof}

\appendix

\section{Appendix: Applications of \texorpdfstring{\cref{MainThmWeakBound1}}{Theorem 1.1}}

\subsection{Weak type \texorpdfstring{$L^2$}{L2} estimates}
We define for $f \in L^2(\R^\mathbf{d})$ the maximal modulation operator on $\R^\mathbf{d}$
\begin{align}
    \label{TRDef}
    T_\mathbb{R}^\mathcal{Q}f(x) = \sup_{Q \in \mathcal{Q}} \sup_{0 < \underline{R} < \overline{R}} \left| \int_{\underline{R} < \rho(x-y) < \overline{R}} K(x-y) e^{i Q(y)} f(y) \, \mathrm{d}y \right|\,.
\end{align}

\begin{corollary}
    \label{MainThmAnisotopic}
    Let $K$ be a Calderón-Zygmund with anisotropic scaling, with constant $A$.
    Then the operator $T^\mathcal{Q}$ is bounded from $L^2(\mathbb{T}^\mathbf{d})$ into $L^p(\mathbb{T}^\mathbf{d})$ for $1 \leq p < 2$ with norm
    \[
        \|T^\mathcal{Q}\|_{2 \to p} \lesssim_{\alpha, d,p} A \,.
    \]
    The operator $T^\mathcal{Q}$ is bounded from $L^2(\mathbb{T}^\mathbf{d})$ into $L^{2,\infty}(\mathbb{T}^\mathbf{d})$ and the operator $T_\R^\mathcal{Q}$ is bounded from $L^2(\R^\mathbf{d})$ into $L^{2, \infty}(\R^\mathbf{d})$.
\end{corollary}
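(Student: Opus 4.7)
The first assertion follows directly from \cref{MainThmWeakBound1}. The plan is to apply \eqref{MainThmWeakBound1Eqn} and control the right hand side by $A$. The key observation is the standard bound $\|M^K\|_{2\to 2} + \|R^K\|_{2\to 2} \lesssim A$: for $M^K$ this follows from the pointwise inequality $M^K f \lesssim A M f$ with $M$ the anisotropic Hardy-Littlewood maximal function (using \eqref{KernelUpperBound} and the support condition \eqref{KernelSupport}), and for $R^K$ it is \cref{nontangential}. Since $x \mapsto x \log^2(e + A/x)$ is monotone increasing in $x>0$ for each fixed $A$, substituting $x = A$ yields a bound of size $A \log^2(e+1) \lesssim A$, as required.

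The second assertion, the weak-$L^2$ bound on the torus, does not appear to follow directly from \cref{MainThmWeakBound1}, because the distribution function estimate derived in its proof carries a factor $\log^4 \lambda$ that precludes the decay $\lambda^{-2}$ needed for $L^{2,\infty}$. The plan instead is to invoke the strong-type estimate $\|T^\mathcal{Q}\|_{p\to p} \lesssim A$ for $1 < p < \infty$, which as noted in the outline in the introduction can be obtained by a straightforward adaptation of the proof in \cite{ZK2021} to the anisotropic setting (essentially the same decomposition into antichains and forests from Section 2, but without tracking constants, and with the decay rates used in Sections 3 and 4 sufficient to close the geometric series without a refined maximal-function input). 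Specializing to $p = 2$ and using the trivial inclusion $L^2(\mathbb{T}^\mathbf{d}) \hookrightarrow L^{2,\infty}(\mathbb{T}^\mathbf{d})$ finishes this step.

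For the final assertion, the plan is to transfer the torus bound to $\R^\mathbf{d}$ by an anisotropic scaling argument. By density it suffices to consider $f \in L^2(\R^\mathbf{d})$ with compact support; choose $R > 0$ so that $\spt f$ lies inside an anisotropic ball of radius $R/4$, and work with the rescaled data $f_R = f \circ \delta_R$ and $K_R(x) = R^{|\alpha|} K(\delta_R(x))$. The rescaled kernel satisfies \eqref{KernelUpperBoundIntro}-\eqref{kernelL2bound} with the same constant $A$, since these estimates are invariant under anisotropic dilation, and the set $\mathcal{Q}$ is stable under precomposition with $\delta_R$. Apply the torus weak-$L^2$ bound from the previous step to $f_R$ to control the contribution of truncation scales $\rho(x-y) \lesssim R$; the tail at larger scales is dominated pointwise by a constant multiple of $A \cdot Mf$ via the standard Calderón-Zygmund argument that $K$ is essentially constant at scales exceeding the diameter of $\spt f$, and this is controlled on $L^{2,\infty}$ by the Hardy-Littlewood maximal inequality. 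The main technical obstacle lies here: one must verify carefully that the double supremum in $\underline R < \overline R$ on the Euclidean side splits cleanly into a torus-matching piece and a controllable tail, uniformly over the polynomial $Q \in \mathcal{Q}$, and then pass $R \to \infty$ to recover the desired bound.
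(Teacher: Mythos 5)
The first assertion is handled correctly, though you should explicitly supply an admissible decomposition (e.g.\ $K_s(x) = \psi(2^{-s}\rho(x))K(x)$ for a suitable $\psi$) before invoking \cref{MainThmWeakBound1}, since the theorem is stated for kernels with such a decomposition; this is a minor omission. The third assertion is also handled by essentially the same rescaling-plus-near/far-split that the paper uses.

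The genuine gap is in the second assertion. You correctly observe that the distribution function estimate inside the proof of \cref{MainThmWeakBound1} carries a $\log^4\lambda$ loss and therefore cannot give the weak $L^2$ bound directly. But your fix is to invoke the strong $L^p$ estimate $\|T^\mathcal{Q}\|_{p\to p}\lesssim A$ and specialize to $p=2$. That estimate is only stated in a remark as something one \emph{could} prove by re-running the whole \cite{ZK2021} machinery in the anisotropic setting; it is not proved in this paper, and in fact the paper's entire architecture (exceptional sets, reverting the changes Lie introduced to obtain strong $L^2$ without interpolation) is explicitly chosen so as to \emph{avoid} having to prove strong $L^2$. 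Citing it here is circular at best and would require an independent, substantially longer argument at worst. The paper instead gets the weak $L^2$ bound almost for free from the $L^2\to L^p$ ($p<2$) estimate via the \emph{Stein maximum principle} (\cite{Stein1961}, Cor.\ 1): after restricting the suprema to a countable dense set, $T^\mathcal{Q}$ is a maximal operator built from translation-invariant linear operators that are bounded on $L^2(\mathbb{T}^\mathbf{d})$; since the $L^2\to L^p$ bound shows $T^\mathcal{Q}f<\infty$ a.e.\ for every $f\in L^2(\mathbb{T}^\mathbf{d})$, Stein's principle upgrades this automatically to the weak-type $(2,2)$ bound. That is the missing idea.
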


\begin{remark}
    It is also possible to prove strong estimates on $L^p$, for $1 < p < \infty$, by adapting the arguments in \cite{ZK2021} to the anisotropic setting. 
    With this approach one obtains $\|T^\mathcal{Q}\|_{p \to p} \lesssim A$ and $\|T_\R^\mathcal{Q}\|_{p \to p} \lesssim A$. 
\end{remark}

\begin{proof}[Proof of \cref{MainThmAnisotopic}]
    Let $\psi \in C_c^\infty((1/8, 1/2))$ with $\sum_{k \in \mathbb{Z}} \psi(2^k t) = 1$ for all $t > 0$. For every Calderón-Zygmund kernel $K$, the kernels $K_s(x) = \psi(2^{-s} \rho(x)) K(x)$ form an admissible decomposition of $K$. It holds that $M^K \lesssim A M$, where $M$ is the Hardy-Littlewood maximal functions, and $\|R^K\|_{2 \to 2} \lesssim A$, see \cref{nontangential}. Thus \cref{MainThmWeakBound1} implies that for $1 \leq p < 2$
    \[
        \|T^\mathcal{Q}\|_{2 \to p} \lesssim (\|M^K\|_{2 \to 2} + \|R^K\|_{2 \to 2})\log^2(e + \frac{A}{\|M^K\|_{2 \to 2} + \|R^K\|_{2 \to 2}}) \lesssim A\,,
    \]
    where the last inequality holds because $x\log^2(e + A/x)$ is increasing in $x > 0$ for all $A > 0$.
    
    The integral in the definition of $T^\mathcal{Q}$ is continuous in the polynomial $Q$ and in the truncation parameters $\underline{R}$ and $\overline{R}$, thus the operator $T^\mathcal{Q}$ does not change when the supremum is restricted to a countable dense set of polynomials and truncations. For fixed $Q$, $\underline{R}$ and $\overline{R}$, the operator without suprema is a convolution operator with bounded kernel, thus it is translation invariant, linear, and bounded on $L^2(\mathbb{T}^\mathbf{d})$. Hence the Stein maximum principle (see \cite{Stein1961}, Cor. 1) implies that 
    there exists a constant $C$ such that
    \[
        \|T^\mathcal{Q} f\|_{L^{2,\infty}(\mathbb{T}^\mathbf{d})} \leq C \|f\|_{L^2(\mathbb{T}^\mathbf{d})}
    \]
    for every $f \in L^2(\mathbb{T}^\mathbf{d})$.
    
    It remains to show that $\|T_\R^\mathcal{Q} f\|_{L^{2,\infty}(\mathbb{R}^\mathbf{d})} \leq C' \|f\|_{L^2(\mathbb{R}^\mathbf{d})}$ for all $f \in L^2(\R^\mathbf{d})$. By density, we may assume that $f$ is compactly supported.
    Since the operator $T_\R^\mathcal{Q}$ is translation invariant and the conditions of \cref{MainThmAnisotopic} are invariant under conjugation by anisotropic dilations, we may further assume that $f$ is supported in $[-1/4, 1/4]^\mathbf{d}$. 
    Write
    \begin{align*}
        T_\R^\mathcal{Q}f(x) 
        &\leq \sup_{Q \in \mathcal{Q}} \sup_{0 < \underline{R} < \overline{R} < 1/8} \left| \int_{\underline{R} < \rho(x-y) < \overline{R}} K(x-y) e^{i Q(y)} f(y) \, \mathrm{d}y \right| \\
        &+  \int_{1/8 < \rho(x-y)} |K(x-y)| |f(y)| \, \mathrm{d}y\,.
    \end{align*}
    The first term is only nonzero if $\rho(x-y) \leq 1/8$ for some $y \in \spt f \subset [-1/4, 1/4]^\mathbf{d}$. If $\rho(x-y) \leq 1/8$ then $|x-y| \leq 1/8$. Hence the first term is supported in $[-1/2, 1/2)^\mathbf{d}$ and there it is dominated by $T^\mathcal{Q} f(x)$. By the support assumption on $f$, there exists a constant $C_\mathbf{d}$ and some $r_0 > 0$ such that the integrand in the second term is supported in $\{y \, : \, r_0 \leq \rho(x-y) \leq C_\mathbf{d} r_0\}$.
    Hence the second term is bounded by $A\,Mf$, where $M$ is the Hardy-Littlewood maximal function. We conclude that $T^\mathcal{Q}_\mathbb{R}$ is bounded from $L^2(\R^\mathbf{d})$ into $L^{2,\infty}(\R^\mathbf{d})$.
\end{proof}

\subsection{Singular Integrals Concentrated Near a Parabola}
\cref{MainThmWeakBound1} implies boundedness of maximal modulations of certain singular integral operators with very rough kernels. Let $\phi$ be a continuously differentiable function on $[-1,1]\setminus \{0\}$ which vanishes outside $[-1/4,1/4]$ and such that for some $\varepsilon > 0$
\begin{align}
    \label{MTWB2Condititon}
    |\phi(s)| &\lesssim \frac{1}{|s| \lvert\log \lvert s\rvert \rvert^{3 + \varepsilon}} & &\text{and} & |\phi'(s)| &\lesssim \frac{1}{|s|^2 \lvert\log \lvert s\rvert \rvert^{3 + \varepsilon}}\,.
\end{align}
Define a tempered distribution $K$ on $\R^2$ by
\begin{equation}
    \label{RoughKernel}
    K f = \lim_{\varepsilon \to 0} \int_{|x| > \varepsilon} \int f(x,y) \phi(\frac{y}{x^2} - 1) \, \mathrm{d}y \, \frac{\mathrm{d}x}{x^3} \,,
\end{equation}
and let $Tf = K* f$ for Schwartz functions $f$. By the boundedness of $H_P$ on $L^2(\R^2)$, the operator $T$ is bounded on $L^2(\R^2)$. Since the distribution $K$ agrees with a locally integrable function on $\R^2 \setminus \{0\}$, we can still define the maximally polynomially modulated, maximally truncated operators $T^\mathcal{Q}$ and $T_\mathbb{R}^\mathcal{Q}$ using \eqref{TTDef} and \eqref{TRDef}. As $K$ is not locally in $L^2$, we define them a priori only on smooth functions on the torus and Schwartz functions on $\R^2$. 

Note that one obtains the kernel of the Hilbert transform along the parabola from \eqref{RoughKernel} by setting (formally) $\phi = \delta_0$. Thus $T$ is a slightly less singular version of the Hilbert transform along the parabola. We have the following new result about maximal polynomial modulations of $T$:
\begin{corollary}
    \label{MainThmWeakBound2}
    Suppose that $\phi$ and $K$ are as above.
    Then the operator $T^\mathcal{Q}$ defined on smooth functions by \eqref{TTDef} extends to a bounded operator from $L^2(\mathbb{T}^2)$ into $L^{2,\infty}(\mathbb{T}^2)$. The operator $T_\R^\mathcal{Q}$ defined on Schwartz functions by \eqref{TRDef} extends to a bounded operator from $L^2(\R^2)$ into $L^{2,\infty}(\R^2)$.
\end{corollary}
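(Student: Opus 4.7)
The plan is to decompose $K$ into dyadic layers around the parabola, apply \cref{MainThmWeakBound1} to each layer, and sum. The logarithmic weight in assumption \eqref{MTWB2Condititon} on $\phi$ is precisely what is needed to make this sum converge.

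First I would write $\phi = \sum_{t \geq 0} \phi_t$ as a smooth partition of unity such that, for $t \geq 1$, $\phi_t$ is supported in $\{|s| \sim 2^{-t}\}$ and satisfies $|\phi_t(s)| \lesssim 2^t/t^{3+\varepsilon}$ and $|\phi_t'(s)| \lesssim 2^{2t}/t^{3+\varepsilon}$, and set $K^{(t)}(x,y) = \phi_t(y/x^2-1)/x^3$, so that $K = \sum_t K^{(t)}$. Each $K^{(t)}$ is homogeneous of degree $-3$ under the anisotropic dilation $\delta_r(x,y) = (rx, r^2 y)$ and odd in $x$, so a direct computation with the size and derivative bounds for $\phi_t$ shows that $K^{(t)}$ is a Calderón-Zygmund kernel with anisotropic scaling $\alpha = (1,2)$ whose constant is $A_t \lesssim 2^{2t}/t^{3+\varepsilon}$ (the Hölder estimate being the binding one). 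An admissible decomposition $K^{(t)} = \sum_s K_s^{(t)}$ is then obtained by multiplying with smooth $\rho$-annular cutoffs.

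Next I would bound the associated maximal functions by $\|M^{K^{(t)}}\|_{2 \to 2} + \|R^{K^{(t)}}\|_{2 \to 2} \lesssim t^{-(3+\varepsilon)}$, with a constant independent of $t$. The key observation is that the support of $K_s^{(t)}$ is a parabolic annulus of Lebesgue measure $\sim 2^{3s-t}$ on which $|K_s^{(t)}|\lesssim 2^t/(t^{3+\varepsilon}2^{3s})$, so $\int|K_s^{(t)}|\,\mathrm{d}x \lesssim t^{-(3+\varepsilon)}$. Hence $M^{K^{(t)}}$ is pointwise bounded by $t^{-(3+\varepsilon)}$ times a maximal average over parabolic annuli, which is $L^2$-bounded uniformly in $t$ by Stein--Wainger, and the same bound for $R^{K^{(t)}}$ follows by combining this size gain with the $L^2$-boundedness of the maximally truncated Hilbert transform along the parabola. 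With these inputs, \cref{MainThmWeakBound1} applied to $K^{(t)}$ (for $1 \leq p < 2$) gives, since $A_t/(\|M^{K^{(t)}}\|_{2\to2} + \|R^{K^{(t)}}\|_{2\to2}) \lesssim 2^{2t}$,
\[
    \|T^{\mathcal{Q},K^{(t)}}\|_{L^2(\mathbb{T}^2) \to L^p(\mathbb{T}^2)} \lesssim t^{-(3+\varepsilon)}\log^2(e + 2^{2t}) \lesssim t^{-(1+\varepsilon)}.
\]
Summing over $t$ yields $\|T^\mathcal{Q}\|_{L^2(\mathbb{T}^2) \to L^p(\mathbb{T}^2)} < \infty$ for every $p < 2$; the weak-$L^2$ estimate on $\mathbb{T}^2$ then follows from the Stein maximum principle exactly as in the proof of \cref{MainThmAnisotopic}, and the $\R^2$ bound for $T_\R^\mathcal{Q}$ follows by the same localization argument (the tails outside a unit neighborhood of $\spt f$ are dominated by a Hardy-Littlewood maximal function).

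The main obstacle is the maximal function bound $\|M^{K^{(t)}}\|_{2\to 2} + \|R^{K^{(t)}}\|_{2\to 2} \lesssim t^{-(3+\varepsilon)}$: the trivial estimate by $A_t \sim 2^{2t}/t^{3+\varepsilon}$ is useless here, because then the logarithm in \cref{MainThmWeakBound1} saves nothing and $\sum_t A_t = \infty$. The improvement to $t^{-(3+\varepsilon)}$ comes from the $2^{-t}$ gain in the measure of the support of $K_s^{(t)}$, and it is precisely this gain -- combined with the logarithmic improvement in \cref{MainThmWeakBound1} converting the large factor $2^{2t}$ into merely $t^2$ -- that makes the sum over $t$ summable.
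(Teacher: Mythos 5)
Your proof is correct and takes essentially the same approach as the paper: the same dyadic decomposition of $\phi$ into pieces supported where $|s| \sim 2^{-t}$ (the paper uses the index $j = -t \leq 0$), the same Calderón--Zygmund constant $A_t \sim 2^{2t}/t^{3+\varepsilon}$, the same $L^1$-norm bound $\|\phi_t\|_1 \lesssim t^{-(3+\varepsilon)}$ driving the maximal function estimates via boundedness of the parabolic maximal functions, the same application of \cref{MainThmWeakBound1} yielding $\lesssim t^{-(1+\varepsilon)}$, and the same summation and appeal to the Stein maximum principle. Your remark identifying the $2^{-t}$ support gain as the crucial ingredient matching the logarithmic improvement of \cref{MainThmWeakBound1} is exactly the point of the corollary.
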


\begin{remark}
    In \cref{MainThmWeakBound2} the parabola can be replaced by any homogeneous curve. In general, it holds that if $R^K$ and $M^K$ are bounded on $L^2(\R^\mathbf{d})$ for a singular integral operator on a submanifold, then one can deduce an analogue of \cref{MainThmWeakBound2}.
\end{remark}

\begin{proof}[Proof of \cref{MainThmWeakBound2}]
    We decompose the kernel $K$. Let $\psi \in C_c^\infty((1/8, 1/4 + \delta))$ with $\psi\geq 0$ and $\sum_{s\in\mathbb{Z}} \psi(2^st) = 1$ for all $t > 0$, where $\delta$ will be fixed later. Define $\phi_j(x) = \psi(2^{-j}|x|)\phi(x)$, this is not zero only when $j \leq 0$, and set $K^j(x,y) = \phi_j(\frac{y}{x^2} - 1) \frac{1}{x^3}$, so that $K = \sum_{j \leq 0} K^j$. 
    Our assumption \eqref{MTWB2Condititon} implies that for all $x$ and $j$
    \begin{align*}
        |\phi_j(x)| \lesssim 2^{-j} |j-1|^{-3 - \varepsilon}\,,\quad |\phi_j'(x)| \lesssim 2^{-2j} |j-1|^{-3 - \varepsilon}\,.
    \end{align*}
    Thus $K^j$ is a Calderón-Zygmund kernel with anisotropic scaling with constant $A(j) \lesssim 2^{-2j}|j-1|^{-3-\varepsilon}$ for all $j$. We further set
    \[
        K^j_s(x,y) = \phi_j(\frac{y}{x^2} - 1) \psi(2^{-s}x) \frac{1}{x^3}\,.
    \]
    For $\delta > 0$ sufficiently small, this defines an admissible decomposition of $K^j$.
    Next, we deduce from \eqref{MTWB2Condititon} that $\|\phi_j(x)\|_1 \lesssim |j-1|^{-3-\varepsilon}$. From the boundedness of the maximal functions associated to the Hilbert transform along the parabola it follows that
    \begin{align*}
        \|M^{K^j}\|_{2 \to 2} \sim \|\phi_j\|_1 \lesssim |j-1|^{-3-\varepsilon}\,,\quad \|R^{K^j}\|_{2 \to 2} \lesssim \|\phi_j\|_1 \lesssim |j-1|^{-3-\varepsilon}\,.
    \end{align*}
    Now we apply \cref{MainThmWeakBound1}. 
    Denoting by $(T^j)^\mathcal{Q}$ the maximal modulation operator on the torus associated to $K^j$, we obtain
    \begin{align*}
        \|(T^j)^\mathcal{Q}\|_{2 \to p} \lesssim  |j-1|^{-3-\varepsilon} \log^2(e + 2^{-2j})\lesssim |j - 1|^{-1-\varepsilon}\,.
    \end{align*}
    This is summable in $j$, hence $\|T^\mathcal{Q}\|_{2 \to p} < \infty$.
    The weak type $L^2$ estimates on the torus and on $\R^2$ follows from this estimate, using similar arguments as in the proof of \cref{MainThmAnisotopic}.
\end{proof}

\subsection{Singular Integrals with Weak Continuity Assumptions}
In a different direction, \cref{MainThmWeakBound1} can be used to weaken the continuity requirements of the kernel $K$ in \cref{MainThmAnisotopic}, as long as it is homogeneous and odd. Define a modulus of continuity to be an increasing function $\omega: [0,\infty) \to [0,\infty)$ such that $\lim_{x \to 0} \omega(x) = \omega(0) = 0$ and $\omega(x + y) \leq \omega(x) + \omega(y)$ for all $x,y$. In addition, we will assume that it satisfies 
\begin{equation}
    \label{StrongDini}
    \|\omega\| = \int_0^1 \omega(t) \log^2(\frac{1}{t}) \, \frac{\mathrm{d}t}{t} < \infty\,.
\end{equation}
Note that this condition is slightly stronger than the Dini condition $\int_0^1 \omega(t) \, \mathrm{d}t/t < \infty$. The Dini condition occurs in the theory of singular integrals as a sufficient condition on the modulus of continuity of an $L^{p_0}$ bounded Calderón-Zygmund kernel to ensure $L^p$ boundedness for $1 < p \leq p_0$, see \cite{Stein1993} I.6.3..

Let $\phi: S^{1} \to \mathbb{C}$ be an odd function such that for all $x, x' \in S^{1}$
\[ 
    |\phi(x) - \phi(x')| \leq \omega(|x-x'|)\,.
\]
Let $\alpha = (1,2)$. Define a kernel $K$ on $\R^2$ by
\[
    K(x) = \frac{1}{\rho(x)^{|\alpha|}} \phi(\delta_{\rho(x)^{-1}}(x))\,.
\]
Since $\phi$ is odd, the kernel $K$ has integral $0$ over all sets $\{\underline{R} \leq \rho(x) \leq \overline{R}\}$. This implies that for all Schwartz functions $f$ the limit
\[
    Kf = \lim_{\varepsilon \to 0} \int_{\rho(x) > \varepsilon} f(x) K(x) \, \mathrm{d}x 
\]
exists and defines a tempered distribution.
Let $T$ be the operator defined on Schwartz functions by $Tf = K * f$. Then $T$ extends to a bounded operator on $L^2(\R^2)$ since it is a superposition of Hilbert transforms along the curves $(t, c\sgn(t)t^2)$. Since $K$ is continuous on $\R^2\setminus\{0\}$, we can define the operators $T^\mathcal{Q}$ and $T_\mathbb{R}^\mathcal{Q}$ for $L^2$ functions using \eqref{TTDef} and \eqref{TRDef}. \Cref{MainThmWeakBound1} implies the following new result:

\begin{corollary}
\label{MainThmDini}
    Suppose that $\phi$ and $K$ are as above.
    Then the operator $T^\mathcal{Q}$ defined by \eqref{TTDef} is bounded from $L^2(\mathbb{T}^2)$ into $L^p(\mathbb{T}^2)$ for $1 \leq p < 2$ with norm
    \[
        \|T^\mathcal{Q}\|_{2 \to p} \lesssim_{d,p} \|\omega\|\,.
    \]
    The operator $T^\mathcal{Q}$ is bounded from  $L^2(\mathbb{T}^2)$ into $L^{2, \infty}(\mathbb{T}^2)$ and the operator $T_\R^\mathcal{Q}$ defined by \eqref{TRDef} is bounded from $L^2(\R^2)$ into $L^{2,\infty}(\R^2)$.
\end{corollary}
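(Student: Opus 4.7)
The plan is to decompose the angular part $\phi$ on the anisotropic sphere $\{\rho = 1\}$ into a telescoping sum of smoother pieces, apply \cref{MainThmWeakBound1} to each piece, and sum. Fix a symmetric smooth mollifier $\eta$ on $S^1$, let $\phi^{(j)}$ denote the mollification of $\phi$ at angular scale $2^{-j}$, and set $\phi_0 = \phi^{(0)}$ and $\phi_j = \phi^{(j)} - \phi^{(j-1)}$ for $j \geq 1$, so that $\phi = \sum_{j \geq 0} \phi_j$ uniformly on $S^1$ and each $\phi_j$ is odd by symmetry of $\eta$. Define the associated homogeneous kernel
\[
    K_j(x) = \rho(x)^{-|\alpha|}\,\phi_j(\delta_{\rho(x)^{-1}}(x))\,,
\]
and let $T^j$ be the corresponding operator. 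Since $K = \sum_j K_j$ in the distributional sense, sublinearity of $T^\mathcal{Q}$ gives the pointwise bound $T^\mathcal{Q} f \leq \sum_j (T^j)^\mathcal{Q} f$.

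The definition of modulus of continuity, together with the mean-zero identity $\int \nabla \eta_{2^{-j}} = 0$, yields $\|\phi_j\|_\infty \lesssim \omega(2^{-j})$ and $\|\nabla \phi_j\|_\infty \lesssim 2^j\omega(2^{-j})$ on $S^1$. Transferring these to $K_j$ by homogeneity of $\rho$ gives the pointwise estimate $|K_j(x)| \lesssim \omega(2^{-j})\rho(x)^{-|\alpha|}$ and, via the distance bound $|\delta_{\rho(x)^{-1}}(x) - \delta_{\rho(x')^{-1}}(x')| \lesssim \rho(x-x')/\rho(x)$ whenever $2\rho(x-x') \leq \rho(x)$, the Hölder estimate for $K_j$ with Calderón-Zygmund constant $A_j \lesssim 2^j\omega(2^{-j})$. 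The crucial point is that the maximal functions are much smaller than $A_j$: using the admissible decomposition $K^j_s(x) = \psi(2^{-s}\rho(x))\,K_j(x)$, the pointwise upper bound on $K_j$ gives $M^{K_j} f \lesssim \omega(2^{-j})\,Mf$ with $M$ the anisotropic Hardy-Littlewood maximal function, while odd homogeneity of $K_j$ allows one to write it as a superposition of Hilbert transforms along the curves $(t,c\sgn(t)t^2)$ and to deduce $\|R^{K_j}\|_{2\to 2} \lesssim \omega(2^{-j})$ from the maximal bounds of \cite{Stein+1978}.

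Applying \cref{MainThmWeakBound1} to each $K_j$ for $1 \leq p < 2$ yields
\[
    \|(T^j)^\mathcal{Q}\|_{2\to p} \lesssim \omega(2^{-j})\,\log^2(e + 2^j) \lesssim \omega(2^{-j})(1+j)^2\,,
\]
and summing over $j$, together with a dyadic comparison of $\sum_j \omega(2^{-j})(1+j)^2$ with the integral defining $\|\omega\|$, gives $\|T^\mathcal{Q}\|_{2 \to p} \lesssim \|\omega\|$. The weak-type bound on $\mathbb{T}^2$ then follows from Stein's maximum principle (each smooth truncation of $T$ is translation-invariant with bounded kernel, hence $L^2$-bounded), and the corresponding bound on $\R^2$ follows by localization to a cube of size $1/4$ and domination of the far-field piece by the Hardy-Littlewood maximal function, exactly as in the proof of \cref{MainThmAnisotopic}. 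The main obstacle I expect is the verification of the maximal function scaling $\|M^{K_j}\|_{2\to 2} + \|R^{K_j}\|_{2\to 2} \lesssim \omega(2^{-j})$; once this is in hand the squared logarithm in \cref{MainThmWeakBound1} converts the ratio $A_j/(\|M^{K_j}\|+\|R^{K_j}\|) \sim 2^j$ into exactly the factor $(1+j)^2$ that matches the hypothesis $\int_0^1 \omega(t)\log^2(1/t)\,\mathrm{d}t/t < \infty$. A linear-in-$A$ bound would fail here, so the improved constants of \cref{MainThmWeakBound1} are essential for this application.
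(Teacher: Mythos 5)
Your proof is correct and follows essentially the same route as the paper: mollify $\phi$ into a telescoping sum $\sum_j \phi_j$, observe $\|\phi_j\|_\infty \lesssim \omega(2^{-j})$ while the CZ constant of $K_j$ is only $\lesssim 2^j$ times that, bound $M^{K_j}$ and $R^{K_j}$ by $\|\phi_j\|_1$ via the superposition over parabolic curves, apply \cref{MainThmWeakBound1} to get a factor $(1+j)^2$ from the squared logarithm, and sum using \eqref{StrongDini}. Your two small refinements over the paper's write-up — taking $\eta$ symmetric so each $\phi_j$ is genuinely odd (needed for the superposition step), and the sharper gradient bound $\|\nabla\phi_j\|_\infty \lesssim 2^j\omega(2^{-j})$ in place of $\lesssim 2^j\omega(1)$ — are both welcome but inessential, since subadditivity of $\omega$ gives $\omega(1)/\omega(2^{-j}) \leq 2^j$ so the final $\log^2$ factor is $\lesssim (1+j)^2$ either way.
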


\begin{remark}
    \Cref{MainThmDini} should be compared to \cref{MainThmAnisotopic}, which has stronger continuity requirements, but does not require the kernel to be homogeneous and odd.
    The corollary is formulated on $\R^2$ with $\alpha = (1,2)$ only to simplify the notation, it is still true in higher dimension and for all $\alpha$, with a similar proof. 
    Note also that in the isotropic case this theorem holds for all integrable $\phi: \mathbb{S}^{\mathbf{d}-1} \to \mathbb{C}$, without any continuity assumptions. This follows from the polynomial Carleson theorem, since $K$ can then be written as a superposition of Hilbert transforms along lines.
\end{remark}

\begin{remark}
    We briefly explain where the oddness and homogeneity assumptions are used. In the proof of \cref{MainThmDini}, we will decompose the kernel $K$ into smoother pieces $K'$. To then apply \cref{MainThmWeakBound1}, we need good estimates for the maximal functions $M^{K'}$ and $R^{K'}$. Since $K$ is odd and homogeneous, we are able to choose the pieces $K'$ to be odd and homogeneous. Then we can write them as superpositions of Hilbert transforms along the curves $(t, c\sgn(t) t^2)$ to obtain bounds for the maximal functions. The oddness assumption can be replaced by the condition $K(x,y) = -K(-x,y)$, in that case the same argument works with Hilbert transforms along parabolas.
    For general kernels $K$ we are not aware of methods to decompose them while keeping good control of the maximal functions.
\end{remark}

\begin{proof}[Proof of \cref{MainThmDini}]
    Fix some function $\eta \in C_c^\infty((-1,1))$ with $\int \eta = 1$ and set $\eta_j(x) = 2^{-j} \eta(2^{-j}x)$ for $j \leq 0$, and $\eta_1(x) = 0$.
    Then we have $\phi = \sum_{j \leq 0} \phi_j$ with $\phi_j = \phi * (\eta_j - \eta_{j + 1})$.
    By the Dini continuity of $\phi$ it holds for all $j$ that
    \[
        |\phi_j(x)| \leq \int |\phi(x-y) - \phi(x)|(|\eta_j(y)| + |\eta_{j+1}(y)|) \, \mathrm{d}y \lesssim \omega(2^j)\,. 
    \]
    The functions $\phi_j$ are differentiable and $|\phi_j'(x)| \lesssim \omega(1) 2^{-2j}$.
    We decompose the kernel $K$: Let
    \[
        K^j(x) = \frac{1}{\rho(x)^{|\alpha|}} \phi_j(\delta_{\rho(x)^{-1}}(x))\,.
    \]
    The uniform convergence of $\phi = \sum_{j \leq 0} \phi_j$ implies that $K = \sum_{j \leq 0} K^j$ with uniform convergence on compact subsets not containing $0$.  Denoting by $(T^j)^\mathcal{Q}$ the maximal modulation operator on the torus associated to $K^j$, we thus have for all $f \in L^2(\mathbb{T}^\mathbf{d})$ that
    \[
        T^\mathcal{Q} f \leq \sum_{j \leq 0} (T^j)^\mathcal{Q} f
    \]
    The estimates for $\phi_j$ and $\phi_j'$ imply that $K^j$ is a Calderón-Zygmund kernel with constant $A(j) \lesssim 2^{-2j}$. Using boundedness of the maximal functions associated to the curves $(t, c\sgn(t)t^2)$, one obtains
    \begin{align*}
        \|M^{K^j}\|_{2\to 2} \sim \|\phi_j\|_1 \lesssim \omega(2^j)\,,\quad \|R^{K^j}\|_{2\to 2} \lesssim \|\phi_j\|_1 \lesssim \omega(2^j)\,.
    \end{align*}
    Applying \cref{MainThmWeakBound1}, it follows that
    \[
        \|(T^j)^\mathcal{Q}\|_{2 \to p} \lesssim  \omega(2^j) \log^2(e + 2^{-2j} \frac{\omega(1)}{\omega(2^j)}) \lesssim j^2 \omega(2^j)\,.
    \]
    We conclude that 
    \[
        \|T^\mathcal{Q}\|_{2 \to p} \leq \sum_{j \leq 0} \|(T^j)^\mathcal{Q}\|_{2 \to p} \lesssim \sum_{j \leq 0} j^2 \omega(2^j) \lesssim \int_0^1 \omega(t)\log^2(\frac{1}{t})\,\frac{\mathrm{d}t}{t} \lesssim \|\omega\|\,. 
    \]
    This completes the proof of the $L^2(\mathbb{T}^2) \to L^p(\mathbb{T}^2)$ estimate for $T^\mathcal{Q}$. The weak type estimates on the torus and on $\R^2$ follow now exactly as in the proof of \cref{MainThmAnisotopic}. 
\end{proof}

\printbibliography

\end{document}